\numberwithin{equation}{section}
\newtheorem{theorem}{Theorem}[section]
\newtheorem{proposition}[theorem]{Proposition}
\newtheorem{corollary}[theorem]{Corollary}
\newtheorem{lemma}[theorem]{Lemma}
\newtheorem{remark}[theorem]{Remark}
\newcommand{\ddc}{dd^c}
\newcommand{\dc}{d^c}
\newcommand{\PSH}{{\rm PSH}}
\newcommand{\diam}{\mathop{\mathrm{diam}}\nolimits}
\newcommand{\vol}{\mathop{\mathrm{Vol}}\nolimits}
\newcommand{\capa}{\mathop{\mathrm{Cap}}\nolimits}
\newcommand{\N}{\mathbb{N}}
\newcommand{\R}{\mathbb{R}}
\title[]{\quad Uniform diameter estimates for K\"ahler metrics in big cohomology classes}
\author{Duc-Bao Nguyen and Duc-Viet Vu}
\newcommand{\Addresses}{
{\bigskip
		\footnotesize
		
  \textsc{Duc-Bao Nguyen, National University of Singapore, Department of Mathematics, 10 Lower Kent Ridge Road, 119076, Singapore.}
		\noindent
		\par\nopagebreak
		\noindent
		\textit{E-mail address}: \texttt{ducbao.nguyen@u.nus.edu}}

{
		\bigskip
		\footnotesize
		\textsc{Duc-Viet Vu, University of Cologne, Division of Mathematics, Department of Mathematics and Computer Science, Weyertal 86-90, 50931, K\"oln.}
		\noindent
		\par\nopagebreak
		\noindent
		\textit{E-mail address}: \texttt{dvu@uni-koeln.de}
}}
\begin{document}


\date{\today}

\begin{abstract}
We generalize previous diameter estimates and local non-vanishing of \break volumes for K\"ahler metrics to the case of big cohomology classes. In our proof, among other things, we will prove a uniform diameter estimate for a family of smooth K\"ahler \break metrics only involving an integrability condition. We also have to use fine stability \break properties of complex Monge-Amp\`ere equations with prescribed singularities. 
  \end{abstract}

\medskip

\maketitle

\noindent {\bf Classification AMS 2020}: {32U15},  {32Q15}, {53C23}.

\smallskip
\noindent {\bf Keywords:} {Monge-Amp\`ere equation}, {Gromov-Hausdorff topology}, {closed positive \break current}, {diameter}, {prescribed singularity}, {stability}.

\section{Introduction}

This paper concerns the convergence of K\"ahler spaces in the Gromov-Hausdorff \break topology. In recent years, there has been a growing interest in studying the convergence of K\"ahler metric spaces.  This question arises naturally in several geometric contexts such as the degeneration of K\"ahler-Einstein metrics or the convergence of K\"ahler-Ricci flows,  (see, e.g., \cite{Donaldson-Sun,DonaldsonSun2,Liu-Szekelyhidi,Liu-Szekelyhidi2,Tian-survey-KE,Tosatti-survey,Tosatti-KEflow,Song-Tian-canonicalmeasure}). 

Following the remarkable work \cite{Guo-Phong-Song-Sturm}, a version of Gromov's compactness theorem was established for a very large class of K\"ahler metrics in semi-positive cohomology classes in \cite{Guo-Phong-Song-Sturm2,GPSS_bodieukien,GuedjTo-diameter,Vu-diameter}. A crucial point in these papers is that no assumption about Ricci curvature of metrics in consideration is assumed. Previously, diameter estimates and local non-collapsing of volumes were also established in \cite{Fu-Guo-Song-geometricestimates,Guo-Song-localnoncollapsing}  if Ricci curvature is bounded from below. We refer also to \cite{GGZ-logcontiu,Vu-log-diameter} for some generalizations, and to \cite{Tosatti-collapsing,Rong-Zhang} for the case of Ricci-flat metrics. 

Let $X$ be a compact K\"ahler manifold of dimension $n$ and $\omega_X$ be a fixed K\"ahler form on $X$. Denote by $\mathcal{K}(X)$ the set of smooth K\"ahler metrics on $X$. Let $A,K$ be positive constants and let $p>n$ be a constant. For $\omega \in \mathcal{K}(X)$, we denote
$$V_\omega:= \int_X \omega^n, \quad \mathcal{N}_{X, \omega_X,p}(\omega):= \frac{1}{V_\omega}\int_X \bigg|\log \Big(V_\omega^{-1} \frac{\omega^n}{\omega_X^n} \Big) \bigg|^{p} \omega^n.$$
We define
$$\mathcal{V}(X,\omega_X,n,A,p,K):= \{ \omega \in \mathcal{K}(X): \{\omega\} \cdot \{\omega_X\}^{n-1} \le A, \, \mathcal{N}_{X, \omega_X, p}(\omega) \le K \},$$
where $\{\omega\}, \{\omega_X\}$ denote the cohomology class of $\omega$, $\omega_X$ respectively. If $V_\omega^{-1}\omega^n =f \omega_X^n$ for $f \in L^1 \log L^{p}(X, \omega_X^n)$, then 
$$\mathcal{N}_{X, \omega_X, p}(\omega)=\|f\|_{L^1 \log L^{p}(\omega_X^n)}:= \int_X f |\log f|^{p} \omega_X^n.$$

It was proved in \cite{Guo-Phong-Song-Sturm,GPSS_bodieukien}  (also in \cite{GuedjTo-diameter} or \cite{Vu-diameter}) that the diameter of $(X,\omega)$ for 
$$\omega \in \mathcal{V}(X, \omega_X,n,A,p,K) \quad (p>n)$$
 is uniformly bounded, and a uniform local non-collapsing estimate for volume of $\omega$ holds as well.  In particular, one obtains a version of Gromov's precompactness theorem for the family  $\mathcal{V}(X, \omega_X,n,A,p,K)$ (see \cite[Theorem 1.2]{Guo-Phong-Song-Sturm}). 

 Let $\widetilde{\mathcal{K}}(X)$ be the set of closed positive $(1,1)$-currents $\omega$ of bounded potentials on $X$ such that $\omega$ is a smooth K\"ahler form on an open Zariski dense subset in $X$ and belong to  a big and semi-positive cohomology class in $X$. Recall that a cohomology class is said to be semi-positive if it contains a smooth semi-positive closed form.  For $\omega \in \widetilde{\mathcal{K}}(X)$, the quantity $\mathcal{N}_{X,\omega_X,p}(\omega)$ still makes sense. We put
$$\widetilde{\mathcal{V}}(X,\omega_X,n,A,p,K):= \{ \omega \in \widetilde{\mathcal{K}}(X): \{\omega\} \cdot \{\omega_X\}^{n-1} \le A, \, \mathcal{N}_{X, \omega_X, p}(\omega) \le K \}.$$
This class of metrics was introduced in \cite{Vu-diameter} in order to treat the case of metrics in singular varieties. A slightly more restrictive class of metrics was considered in \cite{Guo-Phong-Song-Sturm2}.   Diameter bounds and local noncollapsing volume estimates were extended to the family of metrics $\tilde{\mathcal{V}}(X,\omega_X,n,A,p,K)$ in \cite{Vu-diameter}. The aim of this paper is to generalize these results to the setting of big cohomology classes. Let us begin with some informal discussion.

Assume that there is a $\mathbb{Q}$-divisor $D$ such that $(X,D)$ is a Kawamata log terminal pair (klt pair for short) and $K_X + D$ is big. Let $T$ be the singular K\"ahler metric solving the equation $\langle T^n \rangle = g \omega_X^n$, where $g$ is a smooth positive function.  
It is known that $T$ is a smooth K\"ahler form on an open Zariski dense subset in $X$ (see e.g. \cite{Guenancia-klt}, and also \cite{BEGZ,Guen-Paun-Bogomolovineqfor3fold,Tsuji,Song-Tian-canonicalmeasure}). The proof of this fact, combined with the uniform diameter estimate for semi-positive forms, implies that the diameter of $T$ is finite. However, using this approach, the obtained diameter bound depends on a log-canonical model of $X$ associated to $K_X+D$. If $D$ varies, then it is not clear if this approach could give a uniform diameter bound for the metric $T$.

Let $T$ be a closed positive $(1,1)$-current which is a smooth K\"ahler form on an open Zariski dense subset $U_T$ in $X$. Let $d_T$ be the metric on $U_T$ induced by $T$. Let $(\widehat X,d_T)$ be the completion of the metric space $(U_T, d_T)$. We extend the smooth measure $T^n$ on $U_T$ to $\widehat X$ by putting $T^n(\widehat X \backslash U_T):=0$. Denote by $V_T$ the total volume $\int_{U_T} T^n$. One observes that $(\widehat X, d_T, T^n)$ is, modulo isometries, independent of the choice of $U_T$. We denote by $\diam(\widehat X, d_T)$ the diameter of $(\widehat X, d_T)$ and by $B_T(x,r)$ the ball of radius $r$ centered at $x \in \widehat X$ in $(\widehat X, d_T)$. Let $\vol_T(B_T(x,r))$ be the volume of $B_T(x,r)$ with respect to measure $T^n$.

Let $p > 1, A,B$ be fixed positive constants. We define the class $\mathcal{W}_{\text{big}}(X,\omega_X,n,p,A,B)$ consisting of  closed positive $(1,1)$-currents $T$ that satisfy the following conditions:
\begin{itemize}
    \item[(i)] $T$ is a smooth K\"ahler form on an open Zariski dense subset $U_T$ of $X$, $T$ belongs to a big cohomology class and $T$ has minimal singularities (see Section \ref{section relative pluripotential theory}  for the notion of minimal singularites);
    \item[(ii)] We have $\{T\} \cdot \{\omega_X\}^{n-1} \leq A\omega_X$, where $\{T\}$ denotes the cohomology class of $T$;
    \item[(iii)] $V_T^{-1} T^n = f_T \omega_X^n$ for some function $f_T$ such that $\|f_T\|_{L^p(\omega_X^n)} \leq B$.
\end{itemize}
As in \cite[Proposition 3.1]{Guo-Phong-Song-Sturm}, the condition (ii) implies that there are a constant $A'>0$ independent of $T$ and a smooth closed $(1,1)$-form $\theta$ cohomologous to $T$ such that $\theta \le A' \omega_X$. We note also that we use the $L^p$-norm in the definition of $\mathcal{W}_{\text{big}}(X,\omega_X,n,p,A,B)$. This is easier for us in computations. We note that when $n\geq 2$, our method also works for $L^1 \log L^p$-norm instead of $L^p$-norms (see Section~\ref{section entropy bound}). 
In what follows, for a constant $C$ we will write $C=C(A_1,A_2,\ldots, A_k)$ if $C$ only depends on given parameters or objects $A_1,A_2,\ldots,A_k$.     
Here is our first main result. 

\begin{theorem}\label{theorem diameter and non collap for big class}
    Let $(X,\omega_X)$ be a compact K\"ahler manifold of dimension $n$. Let $A , B$ be positive constants and $p > 1$ be a constant. Let $q\in \Big(1,\frac{n}{n-1} \Big)$. 
    Then there exist positive constants $C_1= C_1(\omega_X,n,p,A,B)$ and $C_2 = C_2(\omega_X,n,p,A,B,q)$ such that for every $T\in \mathcal{W}_{\text{big}}(X,\omega_X,n,p,A,B)$ we have
     \begin{align} \label{ine-theomain1}
    \diam(\widehat{X},d_T) \leq C_1 \qquad \text{and} \qquad \frac{\vol_T(B_T(x,r))}{V_T} \geq C_2r^{\frac{2q}{q-1}}
    \end{align}
    for every $r\in (0,\diam(\widehat{X},d_T)]$ and $x\in \widehat{X}$. 
\end{theorem}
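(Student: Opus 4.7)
The plan combines two ingredients: (i) the preliminary diameter and non-collapsing estimate for smooth K\"ahler metrics satisfying only an $L^p$-integrability condition, which the paper establishes beforehand, and (ii) fine stability of complex Monge--Amp\`ere equations with prescribed singularities, which allows transfer of these bounds from smooth K\"ahler approximants to the big-class current $T$.

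\textbf{Step 1: K\"ahler approximants in nearby classes.} Fix a smooth form $\theta$ cohomologous to $T$ with $\theta \le A'\omega_X$ (as provided after condition (iii) in the excerpt), and write $T = \theta + dd^c\varphi$ with $\varphi$ of minimal singularities. For $\varepsilon \in (0,1]$, the class $\{\theta+\varepsilon\omega_X\}$ is K\"ahler, so Yau's theorem produces smooth K\"ahler metrics
$$ T_\varepsilon := \theta + \varepsilon\omega_X + dd^c\varphi_\varepsilon, \qquad T_\varepsilon^n = c_\varepsilon f_T \omega_X^n, $$
for suitable normalizing constants $c_\varepsilon > 0$. The cohomological condition (ii) and the $L^p$-bound in (iii) yield, after elementary manipulations, uniform (in $\varepsilon$) bounds on $\{T_\varepsilon\}\cdot\{\omega_X\}^{n-1}$ and on $\|V_{T_\varepsilon}^{-1}T_\varepsilon^n/\omega_X^n\|_{L^p}$. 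The smooth-K\"ahler result therefore delivers $\diam(X, T_\varepsilon)\le C_1$ and $\vol_{T_\varepsilon}(B_{T_\varepsilon}(x,r))/V_{T_\varepsilon}\ge C_2 r^{2q/(q-1)}$ with constants uniform in $\varepsilon$.

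\textbf{Step 2: Passage to $(\widehat X, d_T)$.} By prescribed-singularity stability in the sense of Darvas--Di Nezza--Lu and its big-class refinements, $\varphi_\varepsilon \to \varphi$ in capacity on $X$ and smoothly on compact subsets of $U_T$, so $T_\varepsilon \to T$ smoothly on $U_T$. Quantitative forms of the stability bound give precise control on the $\omega_X$- and $T_\varepsilon^n$-measures of sublevel sets such as $\{\varphi < \varphi_\varepsilon - \delta\}$. This control is what allows the diameter and non-collapsing estimates for $T_\varepsilon$ to be transferred to $T$: essentially, almost-minimizing paths of $T_\varepsilon$ can be pushed into $U_T$ without increasing their length by more than $o_\varepsilon(1)$, and balls $B_T(x,r)$ can be related to balls $B_{T_\varepsilon}(x,r')$ for close $r'$ up to sets of vanishing measure; passing to the limit then yields \eqref{ine-theomain1}.

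\textbf{Main obstacle.} The real difficulty lies in Step 2: comparing the distance $d_{T_\varepsilon}$---which uses paths in all of $X$, where $T_\varepsilon$ is K\"ahler---with $d_T$, which only sees the Zariski open locus $U_T$. One must show that a minimizing $T_\varepsilon$-path does not shortcut through a neighbourhood of the non-K\"ahler locus $X\setminus U_T$ in a way that has no analogue for $T$. Proving such a statement with constants depending only on $(\omega_X, n, p, A, B, q)$ requires sharp quantitative stability for complex Monge--Amp\`ere equations with prescribed singularities, adapted to the big class framework of Boucksom--Eyssidieux--Guedj--Zeriahi, and is what we expect to be the most delicate part of the argument; the loss in the volume exponent $2q/(q-1)$ (rather than the optimal $2n$) is a natural price to pay for this stability-based approach.
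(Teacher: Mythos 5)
Your approximants $T_\varepsilon = \theta + \varepsilon\omega_X + dd^c\varphi_\varepsilon$ are required to be smooth K\"ahler forms produced by Yau's theorem, which means the class $\{T\}+\varepsilon\{\omega_X\}$ must be K\"ahler. This holds only when $\{T\}$ is nef. A general big class is not nef, and then $\{T\}+\varepsilon\{\omega_X\}$ is still not K\"ahler for small $\varepsilon$, so there is no such smooth K\"ahler $T_\varepsilon$ to feed into the smooth-K\"ahler result. This is precisely the obstruction the paper is designed to overcome: it approximates $T$ by currents $T_k$ with \emph{analytic singularities} (Demailly's regularization), passes to a smooth modification $\pi_k : X_k \to X$ where $\pi_k^* T_k = \omega_k + [D_k]$ with $\omega_k$ semi-positive, and works on $X_k$. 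Crucially, because the modifications $X_k$ vary with $k$, the paper first has to re-prove the Green's function and Sobolev estimates for smooth K\"ahler metrics in a form that depends \emph{only} on an integrability condition and not on a fixed reference K\"ahler form (Sections 2--3); without that refinement the bounds on $X_k$ would degenerate. Moreover, even on $X_k$ one cannot directly control the density of $(T_k)^n$, so the paper replaces $T_k$ by a modified metric $\widetilde{T}_k$ of the same singularity type solving a prescribed-singularity Monge--Amp\`ere equation. None of this machinery appears in your Step 1.

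\textbf{Step 2 targets the wrong object.} You propose to transfer $\diam$ and the non-collapsing estimate by comparing geodesics and balls for $d_{T_\varepsilon}$ and $d_T$, and you correctly flag that controlling shortcuts near $X\setminus U_T$ is the hard part. But the paper never attempts a metric-space comparison of this kind, and your proposal gives no mechanism for it. What the paper actually transfers are the \emph{Sobolev inequalities} (Theorem \ref{theorem Sobolev for big case}): an integral estimate passes through convergence in capacity of potentials (established via the stability Theorem \ref{theorem stability for MA with prescribed singularity}) combined with weak convergence of $\widetilde{T}_k^\ell \to T^\ell$ for $1 \le \ell \le n$ (Theorem \ref{theorem convergence in capacity}). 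Once the Sobolev inequality holds for $T$ itself, the diameter bound and the local non-collapsing estimate follow directly by the standard cut-off argument with $u_r = \rho(d_T(x,\cdot)/r)$ and the iteration of \cite{Guo-Phong-Song-Sturm2}, entirely inside $U_T$, so no geodesic of $T_\varepsilon$ ever needs to be compared with one of $T$. This is a structurally different and considerably more robust route than the one you sketch. Finally, the exponent $\frac{2q}{q-1}$ is not a ``price to pay for stability'': it is the exponent already produced by the smooth-K\"ahler Sobolev iteration, and letting $q \to \frac{n}{n-1}$ recovers the optimal exponent $2n$; stability plays no role in this loss.
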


For $T\in \mathcal{W}_{\text{big}}(X,\omega_X,n,p,A,B)$, recall that $T$ is a K\"ahler form  on an open Zariski dense subset $U_T$. Denote by $W^{1,2}_T(X)$ the set of Borel functions $u$ on $X$ such that $u, \nabla u$ are locally square integrable in $U_T$ and there holds
\[\int_{U_T} |u|^2 T^n < \infty \text{ and } \int_{U_T} du\wedge d^cu \wedge T^{n-1} < \infty.\]
Theorem \ref{theorem diameter and non collap for big class} is a consequence of the following uniform Sobolev inequalities for metrics in $\mathcal{W}_{\text{big}}(X,\omega_X,n,p,A,B)$. This extends Sobolev inequalities obtained in \cite{Guo-Phong-Song-Sturm,Guo-Phong-Song-Sturm2}; see also \cite{GuedjTo-diameter}.

\begin{theorem}\label{theorem Sobolev for big case}
Let $1<r<\frac{n}{n-1}$ be a constant. Let $T\in \mathcal{W}_{\text{big}}(X,n,p,A,B)$. Then there exists a positive constant $C = C(\omega_X,n,p,A,B,r)$ such that 
\begin{itemize}
        \item[(i)] 
        \[\Big(\frac{1}{V_T} \int_{U_T} |u - \overline{u}|^{2r} T^n \Big)^{\frac{1}{r}} \leq C \Big(\frac{1}{V_T} \int_{U_T} du\wedge d^c u \wedge T ^{n-1}\Big)\]
        for every $u\in W^{1,2}_T(X)$, where $\overline{u}:= V_T^{-1} \int_{U_T} u T^n$,
        \item[(ii)] 
    \[\Big(\frac{1}{V_T} \int_{U_T} |u|^{2r} T^n \Big)^{\frac{1}{r}} \leq C \Big( \frac{1}{V_T} \int_{U_T} du\wedge d^c u \wedge T^{n-1} + \frac{1}{V_T} \int_{U_T} |u|^2 T^n\Big)\]
        for every $u\in W^{1,2}_T(X)$.
    \end{itemize}    
\end{theorem}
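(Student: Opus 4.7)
\textbf{The plan} is to adapt the auxiliary Monge-Amp\`ere method of \cite{Guo-Phong-Song-Sturm, GuedjTo-diameter, Vu-diameter} to the big-class setting, combining non-pluripolar Monge-Amp\`ere theory \cite{BEGZ} with the fine stability / $L^\infty$-estimates for potentials with prescribed singularities alluded to in the abstract. By density and truncation we may assume $u$ is smooth and bounded on $X$; for (ii) we may further assume $u \geq 0$. Condition (ii) of $\mathcal{W}_{\text{big}}(X,\omega_X,n,p,A,B)$ supplies a smooth closed form $\theta$ cohomologous to $T$ with $\theta \leq A'\omega_X$; let $V_\theta$ denote the envelope of $\theta$-psh functions with minimal singularities and write $T = \theta + dd^c \varphi_T$ where $\varphi_T - V_\theta$ is uniformly bounded since $T$ has minimal singularities.

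\textbf{Step 1: auxiliary Monge-Amp\`ere equation.} Set $F = |u|^{2(r-1)}$ and $c = V_T / \int_{U_T} F\,T^n$. I would solve
\[\langle (\theta + dd^c \psi)^n \rangle = c F\, T^n, \qquad \sup_X(\psi - V_\theta) = 0,\]
for some $\psi \in \PSH(X,\theta)$ with minimal singularities; existence is provided by \cite{BEGZ}. The crucial analytic input is a uniform estimate $\sup_X |\psi - \varphi_T| \leq \Lambda$ in which $\Lambda$ depends only on $\omega_X$, $n$, $p$, $A$, $B$ and on the $L^{p_0}(\omega_X^n)$-norm of the density $cF f_T$ for some $p_0 > 1$. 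Such a bound is a consequence of the recent prescribed-singularities Monge-Amp\`ere $L^\infty$-theory, and is the heart of the paper.

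\textbf{Step 2: integration by parts and absorption.} Writing $\theta + dd^c\psi = T + dd^c(\psi - \varphi_T)$ and telescoping,
\[\langle(\theta+dd^c\psi)^n\rangle - T^n = dd^c(\psi - \varphi_T)\wedge \sum_{k=0}^{n-1} (\theta+dd^c\psi)^k \wedge T^{n-1-k}.\]
Testing $u^2$ against this identity and integrating by parts on a suitable exhaustion of $U_T$, using the boundedness of $\psi - \varphi_T$ together with the concentration of non-pluripolar mass on $U_T$, one obtains after Cauchy-Schwarz an inequality of the shape
\[c\int_{U_T}|u|^{2r}\,T^n \leq \int_{U_T}u^2\, T^n + C\Lambda \int_{U_T}du\wedge d^c u\wedge T^{n-1}.\]
Substituting the value of $c$ and applying H\"older's inequality in the form $\int |u|^{2(r-1)}T^n \le V_T^{1/r}(\int |u|^{2r}T^n)^{(r-1)/r}$ converts this into (ii). For the absorption step to close, $\Lambda$ must depend only sublinearly on $\|u\|_{L^{2r}(T^n)}$; this is precisely where the restriction $r < n/(n-1)$ is used, as it permits a choice of $p_0 > 1$ small enough that H\"older interpolation between $\|u\|_{L^{2r}(T^n)}$ and $\|f_T\|_{L^p(\omega_X^n)} \leq B$ yields $\Lambda \leq C_1\bigl(1 + \|u\|_{L^{2r}(T^n)}^{\beta}\bigr)$ with some $\beta < 1$, so that standard absorption closes the estimate. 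Statement (i) then follows by applying (ii) to $u - \bar u$ combined with a dual Poincar\'e-type estimate.

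\textbf{The main obstacle} is ensuring the $L^\infty$ bound of Step 1 is genuinely uniform across $\mathcal{W}_{\text{big}}$: the minimal-singularities reference $V_\theta$ itself varies with $T$, so one needs estimates insensitive to this choice, depending only on the cohomological and integrability data $(\omega_X, n, p, A, B)$. A secondary technical difficulty is the rigorous justification of the integration by parts on $U_T$, despite the divergence of the potentials along $X\setminus U_T$; this is handled via the exhaustion outlined above together with mass-concentration properties of non-pluripolar products.
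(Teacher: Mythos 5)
Your approach diverges fundamentally from the paper's and has a genuine gap in Step 2. The paper does not test an auxiliary Monge-Amp\`ere equation directly on $(X,T)$ to get Sobolev. Instead it (a) proves uniform Green's function estimates for smooth K\"ahler metrics under an $L^m$-moment condition (Theorem~\ref{theorem uniform Green semipositive class}), from which Sobolev inequalities follow for such metrics (Theorem~\ref{theorem uniform sobolev for semi-positive class}); (b) transfers these to currents in the class $\mathcal{V}_{\text{semi}}$ by approximation (Theorem~\ref{theorem uniform sobolev for semi-positive class positive closed current}); and (c) reduces the big class to $\mathcal{V}_{\text{semi}}$ via Demailly analytic regularization, a smooth modification $\pi_k:X_k\to X$ with $\pi_k^*T_k=\omega_k+[D_k]$, a uniform $\kappa_{\theta_{k,\rho}}$-bound from Theorem~\ref{theorem Linfty for MA with pres sing}, and stability/convergence-in-capacity (Theorems~\ref{theorem stability for MA with prescribed singularity} and~\ref{theorem convergence in capacity}, Lemmas~\ref{le-step1xapxikidigitich} and~\ref{le-step2-giaikidichankapp}). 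The auxiliary MA equation is used in this chain only for $L^\infty$- and Green's-function bounds, not to derive the Sobolev inequality directly.

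The concrete gap is in Step 2. Testing $u^2$ against your telescoped identity and integrating by parts yields
\[
\int_X (\psi-\varphi_T)\bigl(2u\,dd^c u + 2\,du\wedge d^c u\bigr)\wedge \sum_{k=0}^{n-1}(\theta+dd^c\psi)^k\wedge T^{n-1-k}.
\]
The summands with $k\ge 1$ give $\int du\wedge d^c u\wedge (\theta+dd^c\psi)^k\wedge T^{n-1-k}$, which is not dominated by $\int du\wedge d^c u\wedge T^{n-1}$: you control $\|\psi-\varphi_T\|_{L^\infty}$ but have no pointwise comparison $\theta+dd^c\psi\lesssim T$, which would be a $C^2$-level estimate, not an $L^\infty$-level one. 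The $u\,dd^c u$ piece, after a further integration by parts, leaves a cross term $\int u\,d(\psi-\varphi_T)\wedge d^c u\wedge(\cdots)$, and its Cauchy--Schwarz control introduces $\int u^2\,d(\psi-\varphi_T)\wedge d^c(\psi-\varphi_T)\wedge(\cdots)$, which is again not controlled by the available data. Hence the claimed inequality $c\int|u|^{2r}T^n\le \int u^2 T^n + C\Lambda\int du\wedge d^c u\wedge T^{n-1}$ does not follow from the stated computation. Two further unresolved points: the sublinearity $\Lambda\lesssim 1+\|u\|_{L^{2r}(T^n)}^{\beta}$ with $\beta<1$ is asserted, not shown, and requires a quantitative tracking of the prescribed-singularity $L^\infty$-estimate that you leave implicit; and deducing (i) from (ii) needs a Poincar\'e inequality that, in this setting, is itself a byproduct of the Green's function machinery you are trying to bypass.
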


We obtain immediately the following consequence of Theorem \ref{theorem diameter and non collap for big class}.

\begin{theorem} \label{the-kltpair}
    Let $(X,\omega_X)$ be a compact K\"ahler manifold of dimension $n$. Let $q\in \Big(1,\frac{n}{n-1} \Big)$.  Let $D$ be a $\mathbb{Q}$-divisor such that $(X,D)$ is a klt pair and $K_X + D$ is big. Let $g$ be a smooth positive function such that $ \int_X g \omega_X^n = 1$. Assume that 
    $$\{D\} \leq A\{\omega_X\}, \quad \|g\|_{L^p(\omega_X^n)} \leq B$$
    for some positive constants $A,B$, where $\{D\}$ denotes the cohomology class of the current of integration along $D$. Let $T$ be the closed positive $(1,1)$-current in $K_X +D$ satisfies $$V_T^{-1}\left\langle T^n \right\rangle = g \omega_X^n,$$
    where $\langle T^n \rangle$ denotes the non-pluripolar self-product of $T,\ldots, T$ (n times). Then there exist constants $C_1= C_1(\omega_X,n,p,A,B)$ and $C_2 = C_2(\omega_X,n,p,A,B,q)$ such that  (\ref{ine-theomain1}) holds.  
\end{theorem}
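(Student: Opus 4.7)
The plan is to derive Theorem \ref{the-kltpair} as a direct corollary of Theorem \ref{theorem diameter and non collap for big class}, by checking that the current $T$ given in the statement belongs to $\mathcal{W}_{\text{big}}(X,\omega_X,n,p,A',B)$ for a constant $A' = A'(X,\omega_X,n,A)$. Once this membership is established, the two inequalities in \eqref{ine-theomain1} follow with constants depending only on $\omega_X,n,p,A,B,q$, as asserted.

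The verification splits along the three defining conditions of $\mathcal{W}_{\text{big}}$. For (i), the hypothesis that $(X,D)$ is klt and $K_X+D$ is big, combined with the smoothness and positivity of the density $g$, allows us to invoke the regularity theory for complex Monge-Amp\`ere equations on klt pairs (as in \cite{Guenancia-klt}, building on \cite{BEGZ, Guen-Paun-Bogomolovineqfor3fold, Tsuji, Song-Tian-canonicalmeasure}): the solution $T$ has minimal singularities and is a smooth K\"ahler form on an open Zariski dense subset $U_T\subset X$. Condition (ii) is purely cohomological. Since $\{T\} = \{K_X\} + \{D\}$, we have
\[
\{T\}\cdot\{\omega_X\}^{n-1} \leq \{K_X\}\cdot\{\omega_X\}^{n-1} + A\{\omega_X\}^n,
\]
and the right-hand side is bounded by a constant $A' = A'(X,\omega_X,n,A)$. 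For (iii), the smoothness of $T$ on $U_T$ implies that the non-pluripolar product $\langle T^n\rangle$ coincides there with the smooth product $T^n$, and both assign zero mass to the pluripolar complement $X\setminus U_T$; in particular $V_T = \int_{U_T} T^n = \int_X \langle T^n\rangle$ and the given equation $V_T^{-1}\langle T^n\rangle = g\,\omega_X^n$ reads $V_T^{-1} T^n = g\,\omega_X^n$ on $U_T$. Hence $f_T = g$ and $\|f_T\|_{L^p(\omega_X^n)} = \|g\|_{L^p(\omega_X^n)} \leq B$, as required. Applying Theorem \ref{theorem diameter and non collap for big class} to $T$ then gives \eqref{ine-theomain1}.

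The only genuinely non-trivial ingredient is the content of (i), namely the Zariski-open smoothness together with the minimal-singularities property of the solution $T$ in the klt case. The cohomological bound in (ii) and the identification of the density in (iii) are routine; it is condition (i) that makes Theorem \ref{theorem diameter and non collap for big class} applicable in the first place, and it is precisely here that the klt hypothesis on $(X,D)$ is used. Once that input is in hand, the proof reduces to bookkeeping of constants, and the uniformity of the resulting diameter and volume bounds in the divisor $D$ (within the constraint $\{D\}\leq A\{\omega_X\}$) follows from the uniformity already built into Theorem \ref{theorem diameter and non collap for big class}.
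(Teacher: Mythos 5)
Your proposal is correct and follows the same route as the paper: verify that $T$ belongs to $\mathcal{W}_{\text{big}}(X,\omega_X,n,p,A',B)$ (with the paper citing \cite{Guenancia-klt}, \cite{BEGZ} and \cite{Guen-Paun-Bogomolovineqfor3fold} for the Zariski-open smoothness and minimal singularities of $T$, and the cohomological and density bounds following as you describe) and then invoke Theorem \ref{theorem diameter and non collap for big class}. The paper states this tersely in the remark immediately following the theorem; your write-up merely spells out the bookkeeping of the three defining conditions, which the paper leaves implicit.
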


We recall here that the current $T$ as in Theorem \ref{the-kltpair} must belong to $\mathcal{W}_{\text{big}}(X,\omega_X,n,p',A',$ $B')$ for some constants $p'>1,A',B'>0$  by \cite{Guenancia-klt}, see also \cite{BEGZ} and \cite{Guen-Paun-Bogomolovineqfor3fold}.

Let $\mathcal{M}(X,\omega_X,n,p,A,B)$ be the set of all $(\widehat{X},d_T)$ for $T \in \mathcal{W}_{\text{big}}(X,\omega_X,n,p,A,B)$. Note that since $\diam(\widehat{X},d_T)$ is finite, $(\widehat{X},d_T)$ is a compact metric space. Thus, by Theorem~\ref{theorem diameter and non collap for big class} and the proof of \cite[Theorem 1.2]{Guo-Phong-Song-Sturm}, we obtain immediately the following corollary.

\begin{corollary}
    Let $p>1,A,B$ be positive constants. Then the set $\mathcal{M}(X,\omega_X,n,p,A,B)$ is relatively compact in the Gromov-Hausdorff topology on the space of compact metric spaces up to isometry.
\end{corollary}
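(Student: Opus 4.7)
The plan is to apply Gromov's classical precompactness criterion in the Gromov-Hausdorff topology: a family of compact metric spaces is relatively compact if and only if it is uniformly totally bounded, i.e.\ the diameters are uniformly bounded and for every $\varepsilon>0$ there is a uniform bound $N(\varepsilon)$ on the minimal cardinality of an $\varepsilon$-net. Compactness of each individual $(\widehat X, d_T)$ is automatic from the completion construction once the diameter is finite, and the uniform bound $\diam(\widehat X, d_T) \leq C_1$ is the first half of Theorem \ref{theorem diameter and non collap for big class}, so the first condition is in hand immediately.

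For the second, I would fix some $q\in(1,n/(n-1))$ and an arbitrary $\varepsilon \in (0,C_1]$, and then use a standard packing--covering comparison. Pick a maximal $(\varepsilon/2)$-separated set $\{x_1,\dots,x_N\}\subset \widehat X$; by maximality the balls $B_T(x_i,\varepsilon/2)$ already cover $\widehat X$, so in particular the balls $B_T(x_i,\varepsilon)$ form an $\varepsilon$-net, while the smaller balls $B_T(x_i,\varepsilon/4)$ are pairwise disjoint. Applying the lower bound from \eqref{ine-theomain1} to each disjoint ball and summing yields
\[
N\cdot C_2\,(\varepsilon/4)^{2q/(q-1)} \;\leq\; \frac{1}{V_T}\sum_{i=1}^N \vol_T\bigl(B_T(x_i,\varepsilon/4)\bigr) \;\leq\; 1,
\]
so $N \leq C_2^{-1}(\varepsilon/4)^{-2q/(q-1)}$, a bound depending only on $\omega_X, n, p, A, B, q$ and $\varepsilon$, but not on $T$. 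Combining with the diameter bound and Gromov's theorem then gives relative compactness in the Gromov-Hausdorff topology.

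The one conceptual point worth flagging — and the reason no further integrability or positivity hypothesis is needed beyond the definition of $\mathcal{W}_{\text{big}}(X,\omega_X,n,p,A,B)$ — is that the non-collapsing estimate in Theorem \ref{theorem diameter and non collap for big class} is stated in \emph{relative} form, involving the ratio $\vol_T(B_T(x,r))/V_T$. This is exactly what makes the packing count self-normalizing: $V_T$ cancels, and one never needs a two-sided control on the total non-pluripolar volume, which would otherwise be delicate since $V_T = \vol(\{T\})$ is only the positive-part volume of a big class. There is no substantive obstacle; the entire analytic content lies in Theorem \ref{theorem diameter and non collap for big class}, and this corollary is a textbook consequence of Gromov's criterion once those two estimates are available.
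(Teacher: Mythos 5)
Your argument is correct and is exactly the content the paper delegates to the proof of \cite[Theorem 1.2]{Guo-Phong-Song-Sturm}: the uniform diameter bound gives uniform boundedness, and the relative volume non-collapsing yields a uniform packing bound and hence uniform total boundedness, so Gromov's precompactness criterion applies. The only imprecision is the aside that compactness of each individual $(\widehat X, d_T)$ is ``automatic'' from the completion construction plus finite diameter --- complete and bounded does not imply compact in general --- but your own packing estimate supplies the missing total boundedness for each space, so the conclusion stands.
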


Family versions of the above results also hold. Let $\mathcal{X}$ be an irreducible and reduced complex K\"ahler space. Let $\pi: \mathcal{X}\rightarrow 2\mathbb{D}$ be a proper, surjective holomorphic map with connected fibers such that each fiber $X_t = \pi^{-1}(t)$ is a $n$-dimensional compact K\"ahler manifold for $t\neq 0$ and $X_0$ is an irreducible and reduced complex K\"ahler space. Let $\omega_{\mathcal{X}}$ be a fixed K\"ahler form on $\mathcal{X}$ and put $\omega_t := \omega_{\mathcal{X}}|_{X_t}$. Note that the total volumes $V_{\omega_t} = \int_{X_t} \omega_t^n$ are independent of $t$ by \cite[Lemma 2.2]{DiNezzaGG}. We then obtain the following family version of Theorem~\ref{theorem diameter and non collap for big class}.

\begin{theorem}\label{theorem geometric for family}
Let $p >1,A,B$ be positive constants. Let $q\in \Big(1,\frac{n}{n-1} \Big)$. Then there exist constants $C_1 = C_1(n,p,A,B)$ and $C_2 = C_2 (n,p,A,B,q)$ such that
    \begin{align} \label{ine-theomain12}
    \diam(\widehat{X}_t,d_{T_t}) \leq C_1 \qquad \text{and} \qquad \frac{\vol_{T_t} (B_{T_t}(x,r))}{V_{T_t}}\geq C_2 r^{\frac{2q}{q-1}}
    \end{align}
    for every $T_t \in \mathcal{W}_{\text{big}}(X_t,\omega_t,n,p,A,B)$, $t\in \mathbb{D}^*$, $r \in (0,\diam(\widehat{X}_t,d_{T_t})]$, and $x\in \widehat{X}_t$.
\end{theorem}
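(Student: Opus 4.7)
The plan is to apply Theorem \ref{theorem diameter and non collap for big class} fiberwise with reference form $\omega_t = \omega_{\mathcal{X}}|_{X_t}$ and argue that the resulting constants $C_1(\omega_t, n, p, A, B)$ and $C_2(\omega_t, n, p, A, B, q)$ can be chosen independently of $t \in \mathbb{D}^*$. On any relatively compact subset of $\mathbb{D}^*$, uniformity is immediate from the smooth dependence of $\omega_t$ on $t$ and the continuous dependence of the constants on the reference form. Thus the substantive content of the theorem is uniformity as $t \to 0$, where the central fiber $X_0$ may be singular and the smooth fibers $X_t$ can degenerate.

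First I would trace through the proofs of Theorems \ref{theorem diameter and non collap for big class} and \ref{theorem Sobolev for big case} in order to isolate the $\omega_X$-dependent inputs. These are expected to be: (a) a uniform Skoda--Zeriahi integrability exponent for normalized $\omega_X$-plurisubharmonic functions; (b) a cohomological comparison turning $\{T\} \cdot \{\omega_X\}^{n-1} \le A$ into a representative $\theta \le A'\omega_X$ with explicit $A'$; and (c) various pluripotential capacity and volume comparisons relative to $\omega_X$. The key point is that each of these inputs has a natural family analog once $\omega_X$ is replaced by $\omega_t$ coming from the fixed global K\"ahler form $\omega_{\mathcal{X}}$.

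Second, the uniformity as $t \to 0$ is obtained by passing to pluripotential theory on the total space $(\mathcal{X}, \omega_{\mathcal{X}})$. Any $\omega_t$-psh function $u_t$ on $X_t$ with normalized supremum can be realized, up to a bounded error, as the restriction of an $\omega_{\mathcal{X}}$-psh function on a tubular neighborhood of $X_t$ in $\mathcal{X}$ via a standard gluing procedure. Since normalized $\omega_{\mathcal{X}}$-psh functions form a compact family in $L^1_{\rm loc}(\mathcal{X})$, Zeriahi's uniform version of Skoda's integrability theorem yields an integrability exponent valid simultaneously for all $t \in \mathbb{D}^*$. Combined with the fact that $V_{\omega_t}$ is constant in $t$ by \cite[Lemma 2.2]{DiNezzaGG}, this gives family versions of (a)--(c) with constants depending only on the fixed data $(\mathcal{X}, \omega_{\mathcal{X}}, \pi)$ and on $n, p, A, B, q$. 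With these uniform inputs, the proofs of Theorems \ref{theorem Sobolev for big case} and \ref{theorem diameter and non collap for big class} go through verbatim on each fiber, producing the desired uniform $C_1, C_2$ and hence (\ref{ine-theomain12}).

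The step I expect to be the main obstacle is the uniform Skoda-type integrability as $t \to 0$: slicing $\omega_{\mathcal{X}}$-psh functions to fibers controls $L^1$-masses but can lose quantitative control over singular loci concentrating near the critical values of $\pi$. To handle this cleanly one would either carry out the argument on a log-resolution of the total space $\mathcal{X}$ so that the relevant singular behaviour of $\pi$ is made simple normal crossings, or argue by a compactness-and-contradiction scheme on the family of potentials. Once the uniform integrability exponent is in place, the remaining steps are routine family-parametrized repetitions of the single-fiber arguments.
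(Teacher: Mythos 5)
Your overall plan matches the paper's: trace the proof of Theorem \ref{theorem diameter and non collap for big class}, isolate the $\omega_X$-dependent inputs, and replace them with family-uniform versions coming from the global form $\omega_{\mathcal{X}}$. The paper in fact takes a sharper view: it observes that the \emph{only} place $\omega_X$ enters is Lemma \ref{lemma iii implies 1} (the Skoda-type $L^m$-bound for $\int_X(-\psi)^m\,T^n$), and not the additional items (b)--(c) you list --- the cohomological comparison and the capacity estimates in the proof of Theorem \ref{theorem Sobolev for big case} involve the auxiliary K\"ahler forms $\omega_{X_k}$ on the modifications only through terms that tend to zero with $\rho$ and hence never contribute to the final constants. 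The paper then replaces Lemma \ref{lemma iii implies 1} with Lemma \ref{lemma Lm bound for family}, whose proof is a one-line appeal to the uniform exponential integrability result \cite[Theorem 3.4]{DiNezzaGG} for $\omega_t$-psh functions in a family, combined with the uniform $L^p$-bound on $f_{T_t}$.

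The step you correctly flag as the main obstacle --- uniform Skoda integrability as $t\to 0$ --- is precisely what \cite[Theorem 3.4]{DiNezzaGG} provides, and the sketch you give of how to derive it (extend $\omega_t$-psh functions on fibers to $\omega_{\mathcal{X}}$-psh functions on a neighborhood, then invoke Zeriahi's uniform Skoda theorem on the compact family) is roughly the strategy of that paper's proof, but it is not a routine gluing: the passage from $\omega_{\mathcal{X}}$-psh functions (and integrability against $\omega_{\mathcal{X}}^{n+1}$) to $\omega_t$-psh functions and integrability against the slice measure $\omega_t^n$ is the genuinely delicate part, requiring Ohsawa--Takegoshi-type extension input rather than a tubular-neighborhood restriction. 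So your proposal is correct in structure and in identifying the crux, but the crucial uniform Skoda input should be taken as a black box from \cite[Theorem 3.4]{DiNezzaGG} (whose hypotheses, notably \cite[Assumption 3.2.3]{DiNezzaGG}, are met here because the fibers over $\mathbb{D}^*$ are smooth); attempting to rebuild it from scratch would not be as straightforward as your sketch suggests.
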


We now comment on the proof of Theorem \ref{theorem diameter and non collap for big class}.  We start with a naive idea to try to approximate a metric in a big cohomology class by those in K\"ahler classes. A standard way to do it is to use Demailly's analytic approximation of quasi-plurisubharmonic functions (qpsh for short) and pass to desingularization to obtain metrics in semipositive big classes. Precisely let $T$ be a metric as in the statement of Theorem \ref{theorem diameter and non collap for big class}, we approximate $T$ by a sequence of closed positive currents $T_k$ with analytic singularities. Let $\pi_k: X_k \to X$ be a smooth modification so that $\pi^*_k T_k= \omega_k+ D_k$, where $\omega_k$ is a smooth semi-positive form on $X_k$ and $D_k$ is an effective divisor. This gives us hope to apply known results in \cite{Guo-Phong-Song-Sturm,GuedjTo-diameter,Vu-diameter} to $\omega_k$. However, this is not obvious because of the following two issues. First, it is not clear if we can control the density of the measure $(T_k)^n$ (thus no control on $\omega_k^n$). Secondly, even if we can solve the first issue in an appropriate way, previous results on diameter estimates in \cite{Guo-Phong-Song-Sturm,GuedjTo-diameter,Vu-diameter} don't apply directly. The reason is that the class of metrics in these aforementioned references depends crucially on a fixed reference K\"ahler form on the ambient manifold. Whereas we are dealing with a set of smooth modifications $(X_k)_k$ which is not of types considered in \cite{GuedjTo-diameter}. 

To remedy these problems, we first establish a refinement of Green's function estimates for smooth K\"ahler metrics which involves only an integrability condition on the Monge-Amp\`ere measure of the metrics. The proof of this mainly follows those in \cite{Guo-Phong-Song-Sturm,GuedjTo-diameter}. Our contribution here is to realize that no reference K\"ahler form is needed to deduce estimates on Green's function.  

The next part of our proof is to solve the first issue. To this end, using \cite{Lu-Darvas-DiNezza-logconcave}, we obtain a modified metric $\widetilde{T}_k$ of the same singularity type as $T_k$ which satisfies a suitable Monge-Amp\`ere equation. It is not difficult to obtain the weak convergence of $\widetilde{T}_k$ to $T$ by standard techniques. However, we will need a stronger property which is the convergence in capacity of potentials of $\widetilde{T}_k$ to that of $T$. This is not trivial and follows from the stability of Monge-Amp\`ere equations with prescribed singularities proved in \cite{Vu_DoHS-quantitative1,Vu_DoHS-quantitative} (also \cite{Darvas-Lu-DiNezza-singularity-metric}), see Theorem \ref{theorem stability for MA with prescribed singularity} below. Finally, in order to ensure the validity of the integrability condition in the above-mentioned refinement of Green's function estimates,  we define and study the notion of good dsh functions. This is a subclass of dsh functions introduced in \cite{DS_tm}.  Good dsh functions behave somewhat as qpsh functions but they are more general and more suitable to deal with K\"ahler metrics with singularity.  
\\

\noindent \textbf{Aknowledgment.} The first named author is supported by the Singapore International Graduate Award (SINGA). The second named author is partially supported by the \break Deutsche Forschungsgemeinschaft (DFG, German Research Foundation)-Projektnummer \break500055552 and by the ANR-DFG grant QuaSiDy, grant no ANR-21-CE40-0016. We would like to thank Tien-Cuong Dinh for his support during the preparation of this paper.

\section{Uniform estimates for subharmonic functions}\label{section uniform for Laplace}
Our first result in this section is the following refinement of the main result in \cite{Guedj-Lu-1}.

\begin{theorem}\label{theorem - uniform estimate Guedj-Lu condition}
    Let $\mu$ be a probability measure on $X$ and let $\omega$ be a semi-positive big form. Assume that there exist constants $m>n$ and $A>0$ such that
    \[\Big(\int_X (-\psi)^m d\mu\Big)^{\frac{1}{m}} \leq A\]
    for every $\omega$-psh function $\psi$ with $\sup_X \psi = 0$. Let $\varphi$ be the unique solution of the equation
    \[V_\omega^{-1}(\omega + \ddc \varphi)^n= \mu \text{ with } \sup_X \varphi =0.\]
    Then
    \[\|\varphi\|_{L^\infty} \leq C (1+A^{\frac{3}{n+2m-3}}),\]
    where $C = C(m,n)$.
\end{theorem}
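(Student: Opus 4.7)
The plan is to adapt the auxiliary Monge-Amp\`ere equation technique of Guo-Phong-Song-Sturm to the big semi-positive setting, so that the role played there by a reference K\"ahler form is replaced purely by intrinsic quantities of $\omega$ together with the integrability hypothesis on $\mu$. Normalize so that $\sup_X \varphi = 0$ and set $L := -\inf_X \varphi = \|\varphi\|_{L^\infty}$; the goal is a quantitative bound on $L$ in terms of $A$, $m$, $n$.

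First, I would introduce an auxiliary $\omega$-psh function $\psi$, normalized with $\sup_X \psi = 0$, solving a Monge-Amp\`ere equation of the schematic form
\[V_\omega^{-1}(\omega + \ddc \psi)^n \;=\; c \cdot (-\varphi + 1)^{\gamma}\, \mu,\]
where $\gamma > 0$ is a free parameter to be optimized and $c > 0$ is chosen so that the right-hand side has total mass one. Applying the hypothesis to $\varphi$ itself, together with H\"older's inequality (for $\gamma \le m$), yields $\int_X (-\varphi + 1)^{\gamma}\, d\mu \le C(1 + A^{\gamma})$, which controls $c$ from below. Existence of such $\psi$ in the big semi-positive class follows from the solvability results of \cite{BEGZ}.

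Next, I would apply the comparison principle to the pair $(\varphi, \psi)$, combined with a pluripotential argument \`a la Ko{\l}odziej--Guo--Phong--Song--Sturm. Using that a positive power of $-\varphi$ appears on the right-hand side of the auxiliary equation, one obtains a pointwise estimate of the form
\[-\varphi(x) \;\le\; C\bigl(1 + \|\psi\|_{L^\infty}\bigr)^{\beta} \qquad \text{for every } x \in X,\]
with some exponent $\beta \in (0,1)$ depending on $n$, $m$, $\gamma$. In parallel, an upper bound on $\|\psi\|_{L^\infty}$ is obtained by applying the hypothesis a second time, now to $\psi$ itself, giving $\int_X (-\psi)^m\, d\mu \le A^m$, and then feeding this back through the auxiliary Monge-Amp\`ere equation and H\"older's inequality to bound $\|\psi\|_{L^\infty}$ in terms of $A$ and $L$. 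This is the step where it is essential that $\omega$ need only be semi-positive and big: one exploits only the total mass $V_\omega$ of $\omega^n$ and the hypothesis on $\mu$, never a separate reference K\"ahler form.

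Combining the two estimates yields a self-referential inequality of the form $L \le C(1 + A^{a} L^{b})$ with $b < 1$, whose solution is $L \le C(1 + A^{a/(1-b)})$. A careful choice of the free parameter $\gamma$ balances the exponents $a$, $b$ so as to produce precisely the claimed value $a/(1-b) = 3/(n+2m-3)$. The main obstacle is this bookkeeping: the interplay of the three exponents $n$, $m$, $\gamma$ in the comparison step and in the auxiliary equation must be tracked quantitatively, and one must verify that every constant remains uniform in the class $\omega$ lies in, depending only on $m$, $n$, and $A$.
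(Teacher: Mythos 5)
Your proposal takes a genuinely different route from the paper: you adapt the auxiliary Monge--Amp\`ere equation method of Guo--Phong--Song--Sturm, whereas the paper proves this theorem by explicitly repeating the Guedj--Lu argument from \cite{Guedj-Lu-1}, which is a direct level-set/ODE-comparison scheme in the spirit of Ko{\l}odziej. Concretely, the paper studies $g(t)=$ a normalized derivative built from the sublevel measure $\mu(\varphi<-t)$ for $t$ up to $T_{\max}=\sup\{t:\mu(\varphi<-t)>0\}$, introduces the weight $\chi$ with $\chi'(-t)=g(t)^{1/(n+2\varepsilon)}$ (with $\varepsilon=(m-n)/3$), and integrates the resulting differential inequality from $1$ to $T_0$ to extract $T_0\lesssim 1+A^{3/(n+2m-3)}$. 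No auxiliary solution $\psi$ is introduced, no comparison principle between two potentials is used, and no a priori boundedness of a second Monge--Amp\`ere potential is needed.

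The gap in your approach lies in the step where you ``feed back'' to bound $\|\psi\|_{L^\infty}$. In the original Guo--Phong--Song--Sturm scheme, the auxiliary $\psi$ is controlled by applying Ko{\l}odziej's $L^\infty$-estimate \emph{with respect to a fixed reference K\"ahler form} $\omega_X$, because the auxiliary measure has a density in $L^q(\omega_X^n)$. In the present theorem there is no fixed $\omega_X$ and no density hypothesis on $\mu$ whatsoever; the only information available is the integrability condition $\int_X(-\xi)^m\,d\mu\le A^m$. The inequality $\int_X(-\psi)^m\,d\mu\le A^m$ alone cannot bound $\|\psi\|_{L^\infty}$ (for instance, $\mu$ could charge mostly the region where $\psi$ is small, while $\psi$ dips arbitrarily deeply elsewhere), so ``H\"older's inequality'' has no purchase here. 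The only natural tool to upgrade the integrability hypothesis on the auxiliary measure $c(-\varphi+1)^\gamma\mu$ to an $L^\infty$-bound on $\psi$ is \emph{exactly the theorem being proved} (applied with a slightly smaller exponent $m'$), so the argument is circular as written; and $m$ being a real parameter, there is no base case from which to induct. Indeed, the whole point emphasized in the paper's introduction is that their Green's function/$L^\infty$ estimates are formulated so that ``no reference K\"ahler form is needed,'' and the Guedj--Lu scheme achieves this by being self-contained at the level of sublevel sets of $\varphi$. If you want to salvage the GPSS route, you would need to first establish a free-standing $L^\infty$-estimate under the capacity-domination condition (as the paper does in its Theorem~\ref{theorem Linfty for MA with pres sing} and Proposition~\ref{pro-Lmcondition}, which are \emph{downstream} of Theorem~\ref{theorem - uniform estimate Guedj-Lu condition}), and then run the auxiliary scheme against that --- but that effectively replaces the GPSS iteration by the Guedj--Lu argument anyway.
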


Recall that an $\omega$-psh function $\psi$ is a qpsh function on $X$ such that $\omega + dd^c \psi \geq 0$. 

\begin{proof} The proof is essentially that of \cite[Theorem 2.1]{Guedj-Lu-1}. We just need to compute constants explicitly.
    Let $T_{\max} : = \sup \{t > 0: \mu(\varphi < -t) > 0\}$. It is known that $T_{\max}$ is finite. Our goal is to get an explicit bound for $T_{\max}$. Let $\varepsilon = \frac{m-n}{3}\cdot$ Fix $T_{\max}< T_0 \le 0$. We define $g:\mathbb{R}^+ \rightarrow \mathbb{R}^+$ by $g(0) = 1$ and 
    \[ g'(t) := \left\{\begin{matrix}
\frac{1}{(1+t)^2  \mu(\varphi <-t)} & \text{if }t\leq T_0, \\
 \frac{1}{(1+t)^2 }&   \text{if }t>T_0.
\end{matrix}\right.\]
Choose $\chi$ such that $\chi(0) = 0$, $\chi'(0) = 1$ and $\chi'(-t) = g(t)^{\frac{1}{n+2\varepsilon}}$. Put  $h:= -\chi(-t)$. 
In what follows we will use $\lesssim, \gtrsim$ to denote $\le, \ge$ modulo a multiplicative constant depending only on $m,n$.  As in the proof of \cite[Theorem 2.1]{Guedj-Lu-1}, for $t\in [0,T_0]$, we have
\begin{align}\label{eq control for h }
\frac{h^{m+1}(t)}{m+1} \lesssim A(1+t)^2 \cdot (h')^{n+2\varepsilon +1} (t)
\end{align}
for every $t \in [0,T_0]$. Rewrite \eqref{eq control for h }, we get
\begin{equation}\label{eq control for h 2}
(1+t)^{-\frac{2}{n+2\varepsilon +1 }} \lesssim A^{\frac{1}{n+2\varepsilon +1}} h'(t) h^{-\frac{m+1}{n+2\varepsilon +1}}.
\end{equation}
Integrating from $1$ to $T_0$ for both side of \eqref{eq control for h 2}, we get
\begin{align}\label{eq control for h 3}
\frac{(1+T_0)^{1 -\frac{2}{n+2\varepsilon +1 }} - 2^{1-\frac{2}{n+2\varepsilon +1 }}}{1-\frac{2}{n+2\varepsilon +1 }} \lesssim A^{\frac{1}{n+2\varepsilon +1}} \Big(\frac{h(T_0)^{1-\frac{m+1}{n+2\varepsilon +1}} - h(1)^{1-\frac{m+1}{n+2\varepsilon +1}}}{1-\frac{m+1}{n+2\varepsilon +1}} \Big) \cdot   
\end{align}
Note that $2< n+2\varepsilon + 1,m+1 >  n+2\varepsilon + 1$ and $h(1) \geq 1$, from \eqref{eq control for h 3}, we infer that
\begin{equation}\label{eq final bound}
    T_0 \lesssim 1 + A^{\frac{3}{n+2m-3}}
\end{equation}
Since $T_0$ is arbitrary in $[0,T_{\max}]$, we get our desired estimate.
\end{proof}

From now on, let $m > n, A> 0$ be  constants such that
\begin{equation}\label{eq cond for m}
    \tag{*} \lambda := \frac{3}{m(n+2m-3)} < \frac{n}{n+1}\cdot
\end{equation}
Note that this condition is always true if $n\geq 2$.
We define the class $\mathcal{V}_{\mathcal{K}}(X,n,m,A)$ be the set of smooth K\"ahler forms $\omega$ such that 
    \begin{equation}\label{cond V}
    \frac{1}{V_\omega}\int_X (-\psi)^m \omega^n \leq A
    \end{equation}
    for every $\omega$-psh function $\psi$ with $\sup_X \psi = 0$. Our goal is to show that the metrics in $\mathcal{V}_{\mathcal{K}}(X,n,m,A)$ satisfy a uniform diameter bound and a uniform local-noncollapsing volume estimate as in \cite{Guo-Phong-Song-Sturm}.  We follow more or less the same strategy as in \cite{Guo-Phong-Song-Sturm,GuedjTo-diameter}. 
We first obtain the following version of \cite[Lemma 2]{Guo-Phong-Sturm-green} and \cite[Lemma 5.1]{Guo-Phong-Song-Sturm}.

\begin{lemma}\label{lemma Delta>-a for omega} Let $\omega \in \mathcal{V}_\mathcal{K}(X,n,m,A)$. Let $a>0$ be a fixed constant and $v$ be a real integrable function on $X$. For $s\in \R$, put $\Omega_s := \{v > s \}$ is the upper-level set of $v$. Assume that $\Omega_{-1}$ is open. Suppose that $v$ is a $\mathscr{C}^2$ function on an open neighborhood $U$ of $\overline{\Omega}_{-1}$,
    \[\Delta_{\omega} v \geq -a \text{ on } \Omega_0 \text{ and }\int_X v  \omega^n = 0.\]
    Then
    \[\sup_X v \leq C \Big(a + \frac{1}{V_{\omega}} \int_X |v|  \omega^n \Big),\]
    where $C = C(n,m,A) $.
\end{lemma}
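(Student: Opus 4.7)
The plan is to adapt the auxiliary Monge--Amp\`ere strategy of \cite[Lemma 2]{Guo-Phong-Sturm-green} and \cite[Lemma 5.1]{Guo-Phong-Song-Sturm} (compare also \cite{GuedjTo-diameter}) to the integrability hypothesis \eqref{cond V}, using Theorem \ref{theorem - uniform estimate Guedj-Lu condition} in place of the Ko\l odziej-type $L^\infty$ estimate. After reducing to the case $M := \sup_X v > 0$ (otherwise the conclusion is immediate), the goal is to bound $M$ by a constant depending only on $n, m, A$ times $a + V_\omega^{-1}\int_X|v|\,\omega^n$.

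The first step is to solve an auxiliary Calabi--Yau equation
\[V_\omega^{-1}(\omega + dd^c\psi)^n = \mu, \qquad \sup_X \psi = 0,\]
for a probability measure $\mu$ concentrated in a super-level set of $v$ inside $\Omega_0$, for instance a smoothed version of $c(v-M/2)_+\,\omega^n$. Combining \eqref{cond V} with the trivial bound $(v-M/2)_+\le M/2$ delivers, for every $\omega$-psh function $\varphi$ with $\sup\varphi = 0$, a moment bound $\int_X(-\varphi)^m\,d\mu\le \tilde A^m$ with $\tilde A$ of polynomial growth in $M$. Theorem \ref{theorem - uniform estimate Guedj-Lu condition} then produces
\[\|\psi\|_{L^\infty}\le C\bigl(1 + M^{\alpha}\bigr)\]
for a positive exponent $\alpha = \alpha(n,m)$, and condition \eqref{eq cond for m} is tuned precisely so that $\alpha < 1$.

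The second step is the integration-by-parts estimate. By construction of $\mu$ and the normalization $\int_X v\,\omega^n = 0$,
\[c\,M\,V_\omega \;\le\; \int_X v\,(\omega + dd^c\psi)^n \;=\; \int_X v\, dd^c\psi\wedge T,\qquad T := \sum_{i=0}^{n-1}\omega^{n-1-i}\wedge(\omega + dd^c\psi)^i.\]
Approximating $v$ by the smooth cutoff $\chi(v)\cdot v$, where $\chi \equiv 1$ on $[-1/2,\infty)$ and $\chi \equiv 0$ on $(-\infty,-3/4]$ (permissible because $v$ is $C^2$ on a neighborhood of $\overline{\Omega_{-1}}$), integration by parts is legitimate. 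The pointwise hypothesis $\Delta_\omega v\ge -a$ on $\Omega_0 \supset \supp\mu$ and the cohomological mass bound $\int T\wedge\omega = n V_\omega$ then combine to give
\[c\,M\,V_\omega \;\le\; C\,\|\psi\|_{L^\infty}\bigl(a V_\omega + \int_X|v|\,\omega^n\bigr).\]
Combining this with the previous step yields $M \le C(1 + M^\alpha)\bigl(a + V_\omega^{-1}\int|v|\omega^n\bigr)$, and since $\alpha < 1$, a standard absorption argument concludes the proof.

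The main obstacle will be the first step: the measure $\mu$ must be simultaneously supported in $\Omega_0$ (so that the Laplacian hypothesis applies to its support), have density with respect to $\omega^n$ polynomial in $M$ (so that the moment bound extracted from \eqref{cond V} forces $\alpha < 1$), and admit a clean lower bound $\int_X v\,d\mu\ge cM$; striking this balance is delicate and may require iterating or smoothing the naive choice above. A secondary subtlety lies in the integration-by-parts step, where only the $i = 0$ summand $\omega^{n-1}$ of $T$ interacts directly with $\Delta_\omega v$; the remaining cross terms have to be absorbed using the uniform $L^\infty$-bound on $\psi$ against the $\|v\|_{L^1}$-contribution.
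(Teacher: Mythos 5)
Your outline takes a genuinely different route from the paper, and the steps you flag as ``delicate'' are in fact genuine gaps. The paper does not use integration by parts at all: it follows the ABP/maximum-principle scheme of Guo--Phong--Sturm. After reducing to $a=n$ by homogeneity, one solves
\[
(\omega + dd^c\varphi)^n \;=\; \frac{1+v_k}{1+M_k}\,\omega^n, \qquad \sup_X\varphi = -1,
\]
where $v_k$ is a smooth truncation approximating $\max(v,0)$ and $M_k = V_\omega^{-1}\int_X v_k\,\omega^n$, and then applies the maximum principle to $H := 1 + v_k - \varepsilon(-\varphi)^\alpha$ with $\alpha = n/(n+1)$ to derive the pointwise inequality $1 + v_k \le \varepsilon(-\varphi)^\alpha$. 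The auxiliary measure is deliberately \emph{not} concentrated on $\Omega_0$, and the prefactor $(1+M_k)^{-1}$ in its density is what makes the argument close linearly: in the self-improving estimate $\|\varphi\|_{L^\infty}\lesssim (1+M_k)^{-\lambda}\varepsilon^\lambda\|\varphi\|_{L^\infty}^{\alpha\lambda}$ (this is where Theorem~\ref{theorem - uniform estimate Guedj-Lu condition} enters, as you correctly anticipated), the factor $(1+M_k)^{-\lambda}$ cancels $\varepsilon^\lambda \lesssim (1+M_k)^\lambda$, leaving $1+\sup_X v\lesssim 1+M_k$.

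Your integration-by-parts version hits two concrete obstructions. First, after writing $\int_X v\,(\omega+dd^c\psi)^n = \int_X\psi\,dd^c v\wedge T$ with $T=\sum_{i=0}^{n-1}\omega^{n-1-i}\wedge(\omega+dd^c\psi)^i$, only the $i=0$ term is reached by the hypothesis $\Delta_\omega v\ge -a$ on $\Omega_0$. That hypothesis is a one-sided bound on the $\omega$-trace of $dd^c v$ only; for $i\ge 1$ you need to control $dd^c v$ paired against $(\omega+dd^c\psi)^i$, which involves traces with respect to a metric that degenerates wherever $\mu$ vanishes, and neither $\|\psi\|_{L^\infty}$ nor $\|v\|_{L^1}$ gives any grip on these terms. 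Second, the density of $\mu\propto(v-M/2)_+\omega^n$ is $(v-M/2)_+/\int_X(v-M/2)_+\omega^n$, whose $L^\infty$-norm is not polynomially bounded in $M$ because the normalizing integral can be arbitrarily small. Finally, even granting both, the self-improving inequality $M\le C(1+M^\alpha)N$ (with $N=a+V_\omega^{-1}\int|v|\omega^n$) only yields $M\lesssim N^{1/(1-\alpha)}$, not the linear bound $M\lesssim N$ claimed by the lemma. The linearity in the paper comes from the cancellation above, and one cannot simply rescale $v\mapsto tv$ with $t<1$ to repair the exponent because the $\Omega_{-1}$ hypothesis (openness and $\mathscr{C}^2$-regularity near $\overline{\{v>-1\}}$) is not invariant under such downscaling.
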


\begin{proof} We follow closely the proof of \cite[Lemma 2.1]{GuedjTo-diameter} together with the uniform estimate in the Theorem~\ref{theorem - uniform estimate Guedj-Lu condition}. 

By homogeneity, we only need to prove for $a =n$. We approximate $v$ as in \cite{Guo-Phong-Song-Sturm}. Consider a sequence of smooth functions $\eta_k: \R \rightarrow \R_{\geq 0}$ such that $\eta_k$ converges uniformly and decreasingly to the function $x \cdot \mathbf{1}_{\R_+}(\cdot)$. We choose $\eta_k(x) \equiv 1/k$ for $x\leq -1/2$ and $\eta_k(x) \leq 1/k$ for $ x \in (-1/2,0)$. Put $v_k:= \eta _k \circ v$. Since $v \in \mathscr{C}^2$ on $U$, $v_k $ is $\mathscr{C}^2$ on $U$. Let $x\notin \overline{\Omega}_{-1}$, then there exists an open neighborhood $V$ of $x$ such that $V\cap \Omega_{-1} = \emptyset$. Thus, $v_k \equiv 1/k$ on $V$ and then $\mathscr{C}^2$ on $V$. We have just proved that $v_k$ is $\mathscr{C}^2$ on $X$. We note also that $v_k = v$ on $\Omega_0$.

Consider the equation
\begin{equation}\label{eq varphi for Laplacian 1}(\omega +dd^c \varphi)^n =  \frac{1+v_k}{1+M_k} \cdot \omega^n \text{ with } \sup_X \varphi = -1,\end{equation}
where $M_k= V_\omega^{-1} \int_X v_k \omega^n$. Since the right-hand side of \eqref{eq varphi for Laplacian 1} is positive and $\mathscr{C}^2$, there exists a unique $\mathscr{C}^2$ solution $\varphi \in \PSH(X,\omega)$ by Yau's theorem \cite{Yau1978}.

We define $H := 1+ v_k - \varepsilon (-\varphi)^\alpha$ where 
\begin{equation}\label{eq choose-epsilon}\alpha = \frac{n}{n+1} \text{ and } \varepsilon \text{ satisfies }\frac{\alpha^n \varepsilon^{n+1}}{(1+\alpha \varepsilon)^n} = 1+M_k. \end{equation}
Note that from this formula, $\varepsilon \geq 1$. Arguing exactly as in the proof of \cite[Lemma 5.1]{Guo-Phong-Song-Sturm}, we obtain that $H\leq 0$. Hence, we get 
\begin{equation}\label{eq compare v+ varphi}
    1+v\leq \varepsilon (-\varphi)^\alpha \text{ on }X.
\end{equation}

It remains to bound $\varphi$. Let $f =  \frac{1+v_k}{1+M_k}$ and $\mu:= f \omega^n$. Recall that $V_{\omega}^{-1}(\omega+dd^c \varphi)^n = \mu$.
By (\ref{cond V}), we see that 
$$    \frac{1}{V_\omega}\int_X (-\psi)^m \omega^n \leq A \|f\|_{L^\infty}^{1/m}$$
    for every $\omega$-psh function $\psi$ with $\sup_X \psi = 0$. This coupled with Theorem \ref{theorem - uniform estimate Guedj-Lu condition} gives 
    \[\|\varphi\|_{L^\infty} \leq C' \|f\|_{L^\infty}^\lambda,\]
where $C' = C'(n,m,A)$. Note that $v_k \leq 1/k$ outside $\Omega_0$ and $v_k = v$ on $\Omega_0$. Thus, by \eqref{eq compare v+ varphi}, we infer that
\begin{equation}\label{eq compute L infty varphi}
    \|\varphi\|_{L^\infty} \leq  C'(1+M_k)^{-\lambda} \varepsilon^{\lambda} \|\varphi\|_{L^\infty}^{\alpha \lambda}.
\end{equation}
Since $m$ satisfies condition~\eqref{eq cond for m}, we have $\alpha \lambda <1$. Hence, 
\begin{equation}\label{eq compute L infty varphi 2}
    \|\varphi\|_{L^\infty} \leq \Big(C' (1+M_k)^{-\lambda} \varepsilon^{\lambda}  \Big)^{(1-\alpha\lambda)^{-1}}.
\end{equation}

By \eqref{eq compare v+ varphi} again and note that $\varepsilon \leq c_n (1+M_k)$ where $c_n$ is a constant depending on the dimension $n$, we have
\begin{align*}1+\sup_X v &\leq \varepsilon \Big(C' (1+M_k)^{-\lambda} \varepsilon^{\lambda}  \Big)^{\alpha(1-\alpha\lambda)^{-1}} \\ \nonumber
&\leq c_n  (C' c_n^\lambda)^{\alpha(1-\alpha\lambda)^{-1}} (1+M_k).
\end{align*}
We choose 
\begin{equation*}
    C := c_n  (C' c_n^\lambda)^{\alpha(1-\alpha\lambda)^{-1}}.
\end{equation*}
Then $C = C(n,m,A)$. Let $k\rightarrow \infty$, then $M_k \rightarrow (2V_\omega)^{-1} \int_X |v|\omega^n$. We complete the proof.
\end{proof}

The following estimate is similar to \cite[Proposition 2.2]{GuedjTo-diameter} and \cite[Proposition 2.1]{GPSS_bodieukien}.

\begin{proposition}\label{prop |delta|<=1 Linfty for v}
Let $\omega \in \mathcal{V}_\mathcal{K}(X,n,m,A)$. Let $v$ be a $\mathscr{C}^2$ function on $X$ such that 
    \[|\Delta_{\omega} v| \leq 1 \quad \text{and} \quad \int_X v\omega^n = 0.\]
    Then
    \[\|v\|_{L^\infty} \leq C,\]
    where $C = C(n,m,A)$.
\end{proposition}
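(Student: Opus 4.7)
The plan is to apply Lemma \ref{lemma Delta>-a for omega} symmetrically to $v$ and $-v$, thereby reducing the $L^\infty$-estimate for $v$ to a bound on $V_\omega^{-1}\|v\|_{L^1}$, and to close the resulting inequality using the mean-zero hypothesis.

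First, I verify the hypotheses of Lemma \ref{lemma Delta>-a for omega} for $\pm v$: since $v \in \mathscr{C}^2(X)$, the set $\Omega_{-1}(\pm v) = \{\pm v > -1\}$ is open by continuity; $\pm v$ is $\mathscr{C}^2$ on $U := X \supset \overline{\Omega_{-1}(\pm v)}$; the assumption $|\Delta_\omega v| \le 1$ gives $\Delta_\omega(\pm v) \ge -1$ (in particular on $\Omega_0(\pm v)$); and $\int_X (\pm v)\,\omega^n = 0$. Applying the lemma with $a = 1$ to $v$ and to $-v$ separately then yields
\[
\sup_X v \le C\Bigl(1 + V_\omega^{-1}\|v\|_{L^1}\Bigr), \qquad \sup_X(-v) \le C\Bigl(1 + V_\omega^{-1}\|v\|_{L^1}\Bigr),
\]
with $C = C(n,m,A)$.

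Next, I exploit $\int_X v\,\omega^n = 0$, which forces $\inf_X v \le 0 \le \sup_X v$ and hence $|v(x)| \le \mathrm{osc}(v) := \sup_X v - \inf_X v$ pointwise. In particular $V_\omega^{-1}\|v\|_{L^1} \le \mathrm{osc}(v)$, so summing the two inequalities above gives
\[
\mathrm{osc}(v) \le 2C + 2C\,\mathrm{osc}(v),
\]
after which $\|v\|_{L^\infty} \le \mathrm{osc}(v)$ (again valid because $\int v = 0$) would deliver the target estimate, provided the bootstrap closes.

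The main obstacle is precisely this closure: the naive inequality above only closes if $2C < 1$, which the statement of Lemma \ref{lemma Delta>-a for omega} does not guarantee. I expect to overcome this by returning to the proof of Lemma \ref{lemma Delta>-a for omega} and carrying the nonlinear exponent $1 - \alpha\lambda \in (0,1)$, supplied by the Guedj--Lu estimate in Theorem \ref{theorem - uniform estimate Guedj-Lu condition} (through $\|\varphi\|_{L^\infty} \le C'\|f\|_{L^\infty}^{\lambda}$), all the way through the final chain of inequalities rather than absorbing it. The upshot should be a sub-linear inequality of the form $\sup_X v \le C_1 + C_2 (V_\omega^{-1}\|v\|_{L^1})^{\beta}$ for some $\beta \in (0,1)$ depending only on $n, m$; combined with $V_\omega^{-1}\|v\|_{L^1} \le \mathrm{osc}(v)$ and its analogue for $-v$, this produces $\mathrm{osc}(v) \le C + C\,\mathrm{osc}(v)^{\beta}$, which closes unconditionally by Young's inequality and yields the claimed $\|v\|_{L^\infty} \le C(n,m,A)$.
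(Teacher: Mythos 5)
Your setup is fine, and you correctly identify the obstruction: applying Lemma~\ref{lemma Delta>-a for omega} with $a=1$ to $v$ and $-v$ gives $\mathrm{osc}(v)\leq 2C\bigl(1+V_\omega^{-1}\|v\|_{L^1}\bigr)$, which combined with $V_\omega^{-1}\|v\|_{L^1}\leq \mathrm{osc}(v)$ does not close since the constant $C$ is $\geq 1$. The genuine gap is in the proposed repair. Tracking the exponent $1-\alpha\lambda$ through the proof of Lemma~\ref{lemma Delta>-a for omega} does \emph{not} produce a sub-linear dependence on $\|v\|_{L^1}$: the chain is $\|\varphi\|_{L^\infty}\leq C'\|f\|_{L^\infty}^{\lambda}$ with $\|f\|_{L^\infty}=(1+\sup_X v_k)/(1+M_k)$, together with $1+\sup_X v_k\leq \varepsilon\|\varphi\|_{L^\infty}^{\alpha}$ and $\varepsilon\leq c_n(1+M_k)$, and substituting gives
\[
\bigl(1+\sup\nolimits_X v_k\bigr)^{1-\alpha\lambda}\;\leq\; c_n\,(C')^{\alpha}\,(1+M_k)^{1-\alpha\lambda},
\]
so the exponent $1-\alpha\lambda$ cancels identically on both sides and one recovers exactly the linear bound $1+\sup_X v_k\leq (c_n(C')^{\alpha})^{1/(1-\alpha\lambda)}(1+M_k)$. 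This linearity is not a coarse overestimate one can sharpen: the defining relation $\alpha^n\varepsilon^{n+1}/(1+\alpha\varepsilon)^n=1+M_k$ already forces $\varepsilon\geq 1+M_k$, and the whole inequality respects the scaling $v\mapsto tv$, $a\mapsto ta$, which rules out any exponent $\beta<1$. The putative sub-linear inequality $\sup_X v\leq C_1+C_2(V_\omega^{-1}\|v\|_{L^1})^{\beta}$ therefore cannot come out of the proof of Lemma~\ref{lemma Delta>-a for omega}, and the Young-inequality closure you envision never becomes available.

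What is actually missing is an independent, uniform bound on $V_\omega^{-1}\|v\|_{L^1}$ (equivalently on $M_k$), and this is what the cited proof of Proposition~2.1 in the Guo--Phong--Song--Sturm reference supplies. There the two-sided condition $|\Delta_\omega v|\leq 1$ is exploited structurally rather than merely through two black-box applications of the one-sided Lemma~\ref{lemma Delta>-a for omega}: one sets up an auxiliary complex Monge--Amp\`ere equation whose right-hand side is a probability density built from $\Delta_\omega v$ itself, hence with $L^{\infty}$-norm bounded by a dimensional constant rather than by $1+M_k$; Theorem~\ref{theorem - uniform estimate Guedj-Lu condition} then gives a uniform $L^{\infty}$-bound on the auxiliary potential, and a comparison between its Laplacian and $\Delta_\omega v$ (via AM--GM) controls $v$ directly, with no $\|v\|_{L^1}$ term ever appearing. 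Your argument only reaches $\Delta_\omega v$ through the statement of Lemma~\ref{lemma Delta>-a for omega}, which discards exactly the structure needed.
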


\begin{proof} The proof is identical to that of \cite[Proposition 2.1]{GPSS_bodieukien}. We just need to use Theorem \ref{theorem - uniform estimate Guedj-Lu condition} instead of the $L^\infty$-estimate used in \cite{GPSS_bodieukien}. 
\end{proof}

\section{Uniform estimates for Green's functions}\label{section uniform for Green}

In this section, we prove uniform estimates for Green's function of K\"ahler metric $\omega $ in $ \mathcal{V}_\mathcal{K}(X,n,m,A)$. Recall that the normalized Green's function $G_\omega(x,y)$ associated to $(X,\omega)$ is defined by
\[ \frac{1}{V_\omega} (\omega + dd^c G_\omega(x,\cdot)) \wedge \omega^{n-1} = \delta_x \text{ with } \int_X G_\omega(x,\cdot) \omega^n = 0.\]
Here $\delta_x$ denotes the Dirac mass at $x$. This function has been considered in \cite{GuedjTo-diameter}. Note that if we let $\widetilde{G}_\omega(x,\cdot)$ be the Green's function in \cite{Guo-Phong-Song-Sturm}, $G_\omega(x,\cdot) = - \frac{V_\omega}{n} \widetilde{G}_\omega(x,\cdot)$. It is known that $G_\omega(x,y)$ always exists, is smooth on $X\times X$ off the diagonal, and is symmetric. We note also that $G_\omega(x,y)$ satisfies the asymptotic behavior (see \cite[Chapter 5. Theorem 3.5]{Yau-Schoen-LectureDiffgeo})
\begin{equation}\label{eq aysmptotic behavior}G_\omega(x,y) \sim d_\omega (x,y)^{-2n+2} \text{ if } n\geq 2  \text{ and } G_\omega(x,y) \sim \log d_\omega (x,y) \text{ if } n = 1.\end{equation}
For any smooth function $u$, recall also the Green's formula
\[u(x) = \frac{1}{V_\omega} \int_X u (\omega + dd^c G_\omega(x,\cdot)) \wedge \omega^{n-1} = \frac{1}{V_\omega} \int_X G_\omega(x,\cdot) (\omega + dd^c u) \wedge \omega^{n-1}.\]

Here is the main result of this section.

\begin{theorem}\label{theorem uniform Green semipositive class}
Let $\omega \in \mathcal{V}_\mathcal{K}(X,n,m,A)$ be a K\"ahler metric. For 
    \[0< r < \frac{m-n}{mn-m+n} \text{ and } 0<s<\frac{m-n}{2mn - m + n} \ ,\]
     there exist positive constants $C_1 = C_1(n,m,A),C_2 = C_2(n,m,A,r), C_3 = C_3(n,m,A,s)$ such that
    \begin{itemize}
        \item[(i)] $\sup\limits_{y\in X} G_{\omega}(x,y) \leq C_1 $ ,
        \item[(ii)] $\displaystyle \frac{1}{V_\omega} \int_X |G_\omega(x,\cdot)|^{1+r} \omega^n\leq C_2 $ ,
        \item[(iii)] $\displaystyle \frac{1}{V_\omega}\int_X |\nabla G_{\omega}(x,\cdot)|^{1+s}_\omega \omega^n\leq C_3$ ,
    \end{itemize}
    for every $x\in X$.
\end{theorem}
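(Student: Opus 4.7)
I follow the strategy of \cite{Guo-Phong-Song-Sturm,GuedjTo-diameter}, now available in our class $\mathcal{V}_\mathcal{K}(X,n,m,A)$ thanks to the $L^\infty$-estimate of Theorem~\ref{theorem - uniform estimate Guedj-Lu condition} and the sup-bound of Lemma~\ref{lemma Delta>-a for omega}. Fix $x\in X$ and write $G := G_\omega(x,\cdot)$; by the asymptotics \eqref{eq aysmptotic behavior}, $G(y)\to -\infty$ as $y\to x$, $G$ is smooth on $X\setminus\{x\}$, and $\Delta_\omega G \equiv -n$ there. The natural order is $(\mathrm{ii})\Rightarrow(\mathrm{i})\Rightarrow(\mathrm{iii})$, since the sup-bound in (i) comes from Lemma~\ref{lemma Delta>-a for omega} applied to $v=G$ and needs an $L^1$-bound on $G$, which falls out of (ii).

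The heart of the argument is (ii). I set up an auxiliary Monge-Amp\`ere equation of the form
\[(\omega + \ddc \psi)^n\ =\ V_\omega\, Z^{-1}\bigl(1 + (-G)_+\bigr)^{\beta}\omega^n,\qquad Z := \int_X \bigl(1+(-G)_+\bigr)^\beta\omega^n,\]
with $\beta=\beta(n,m,r)>0$ to be calibrated. Theorem~\ref{theorem - uniform estimate Guedj-Lu condition} applied to the normalized solution $\psi$, combined with a H\"older argument against the moment condition \eqref{cond V} to handle the density, produces $\|\psi\|_{L^\infty}\leq C(n,m,A,r)$. I then plug $\psi$ into the Green-type identity
\[\int_X G\,\ddc \psi\wedge\omega^{n-1}\ =\ V_\omega\psi(x)-\int_X\psi\,\omega^n,\]
and use the AM-GM lower bound $\omega_\psi\wedge\omega^{n-1}\geq (V_\omega/Z)^{1/n}(1+(-G)_+)^{\beta/n}\omega^n$ to convert this into an inequality of the form $\int_X(1+(-G)_+)^{1+\beta/n}\,\omega^n\leq C'\,Z$. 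A bootstrap on $Z$ then yields $Z\leq C''$, hence the $L^{1+r}$-bound; the range $r<(m-n)/(mn-m+n)$ is exactly the one in which $\beta$ can be chosen so that both the H\"older step and the bootstrap close.

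For (i), I apply Lemma~\ref{lemma Delta>-a for omega} to $v = G$ with $a=n$: the hypotheses $\int_X G\,\omega^n = 0$ and $\Delta_\omega G \geq -n$ on $X\setminus\{x\}$ are immediate, and the asymptotics force $\overline{\Omega_{-1}}=\{G\geq -1\}\subset X\setminus\{x\}$, so $G$ is $\mathscr{C}^2$ in a neighborhood of $\overline{\Omega_{-1}}$. The lemma gives
\[\sup_X G\ \leq\ C(n,m,A)\Bigl(n + V_\omega^{-1}\|G\|_{L^1(\omega^n)}\Bigr),\]
and $\|G\|_{L^1}$ is controlled by the $L^{1+r}$-bound of (ii) through H\"older. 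For (iii), I begin from
\[V_\omega^{-1}\int_X|\nabla G|_\omega^{1+s}\omega^n\ \leq\ \bigl(V_\omega^{-1}\int_X|\nabla G|_\omega^2\omega^n\bigr)^{(1+s)/2}\]
and bound the $L^2$-integral by integration by parts applied to the truncations $G_T := \max(G,-T)$: one has $\int_X dG_T\wedge \dc G_T\wedge\omega^{n-1} = -\int_X G_T\,\ddc G_T\wedge\omega^{n-1}$, whose right-hand side is dominated, via the defining equation of $G$, by quantities controlled by (i) and (ii). Letting $T\to\infty$ and re-interpolating gives the $L^{1+s}$-bound; the restriction $s<(m-n)/(2mn-m+n)$ is dictated by the relation between $s$ and the exponent $r$ coming from (ii).

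\textbf{Main obstacle.} The delicate step is the H\"older argument in (ii): the moment condition \eqref{cond V} only controls $\int(-\varphi)^m\omega^n$ for $\omega$-psh $\varphi$ with $\sup_X\varphi = 0$, not $\int(-\varphi)^m\,d\mu$ against an arbitrary measure $\mu$, and transferring this control to the auxiliary measure $V_\omega Z^{-1}(1+(-G)_+)^\beta\omega^n$ requires a priori integrability of $(-G)_+$, which is exactly what I am trying to prove. The fix is to run the whole argument as a bootstrap in $Z$, exploiting that the assumption $m>n$ (equivalently $\lambda<n/(n+1)$ in \eqref{eq cond for m}) leaves a genuine gap in the admissible exponents for the iteration to converge; this is also the source of the precise ranges for $r$ and $s$ that appear in the statement.
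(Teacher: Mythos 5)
There are two genuine gaps in your proposal, one fatal and one a circularity that you acknowledge but do not actually resolve.

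\textbf{The step for (iii) fails.} You propose to bound $V_\omega^{-1}\int_X |\nabla G|_\omega^{1+s}\,\omega^n$ by $\bigl(V_\omega^{-1}\int_X |\nabla G|_\omega^2\,\omega^n\bigr)^{(1+s)/2}$ and then to estimate the $L^2$ gradient integral via truncations. But $\int_X |\nabla G|_\omega^2\,\omega^n$ is infinite: by \eqref{eq aysmptotic behavior}, near $x$ one has $G \sim d(x,\cdot)^{-(2n-2)}$ for $n\ge 2$, hence $|\nabla G|^2 \sim d^{-(4n-2)}$, and against the volume element $d^{2n-1}\,dr$ this gives the divergent $\int_0 r^{-2n+1}\,dr$ (similarly for $n=1$). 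Your truncation computation confirms this if carried out: $\int_X dG_T\wedge\dc G_T\wedge\omega^{n-1}$ grows like $V_\omega T$ because $-\int_X G_T\,\ddc G_T\wedge\omega^{n-1}\approx -V_\omega G_T(x)+\int_X G_T\,\omega^n$ and $G_T(x)=-T$. So there is nothing to interpolate against. The way to make (iii) finite is to insert a weight that kills the singularity at $x$: with $\mathcal{G}_x:=G_x-C_1-1\le -1$ and $u:=(-\mathcal{G}_x)^{-\beta}$, Green's identity gives
\[
\frac{1}{V_\omega}\int_X \frac{|\nabla\mathcal{G}_x|^2_\omega}{(-\mathcal{G}_x)^{1+\beta}}\,\omega^n \le \frac{1}{\beta},
\]
which is finite precisely because of the weight, and then H\"older against the $L^{1+r}$ bound from (ii) produces the $L^{1+s}$ gradient estimate with the stated range of $s$.

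\textbf{The ordering $(\mathrm{ii})\Rightarrow(\mathrm{i})$ leaves an unresolved circularity.} Your auxiliary equation for (ii) has density $Z^{-1}\bigl(1+(-G)_+\bigr)^{\beta}$ with $Z=\int_X\bigl(1+(-G)_+\bigr)^\beta\omega^n$, and you need $Z$ to be a priori bounded (and the moment condition \eqref{cond V} transferable to this measure) before Theorem~\ref{theorem - uniform estimate Guedj-Lu condition} can be applied. You flag this as the main obstacle and propose a bootstrap in $Z$ driven by the gap $m>n$; but that gap only governs how far the iteration on the integrability exponent can travel once it has somewhere to start. It does not supply the starting $L^1$ bound on $(-G)_+$, and $\int_X G\,\omega^n=0$ alone does not give one. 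The missing ingredient is Proposition~\ref{prop |delta|<=1 Linfty for v}: taking $h=-\mathbf{1}_{\{G\le 0\}}+V_\omega^{-1}\int_{\{G\le 0\}}\omega^n$, solving $\Delta_\omega v_k=h_k$ with the uniform $L^\infty$ bound, and invoking Green's formula yields $V_\omega^{-1}\int_X|G|\,\omega^n\le C(n,m,A)$; then Lemma~\ref{lemma Delta>-a for omega} gives the sup bound $C_1$ of (i). Only after this can one define $\mathcal{G}_x:=G_x-C_1-1\le -1$, which guarantees the lower bound $\int_X(-\mathcal{G}_x)^{\beta}\omega^n\ge V_\omega$ needed to normalize the auxiliary measure, and start the exponent bootstrap that produces $r_{k+1}=\frac{m-n}{mn}(1+r_k)$ with limit $\frac{m-n}{mn-m+n}$. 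Without this, your bootstrap on $Z$ has no anchor. A secondary point: the full argument for (ii) actually runs \emph{two} auxiliary equations simultaneously (the Laplace-type equation \eqref{eq Laplace for v_k} and the Monge--Amp\`ere equation \eqref{eq varphi for Green}, coupled by AM--GM); your proposal compresses these into one, which would need to be carefully justified.
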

 We follow the proof in \cite{Guo-Phong-Sturm-green,Guo-Phong-Song-Sturm,GuedjTo-diameter} together with the uniform estimates in Section~\ref{section uniform for Laplace}. To lighten the notation, we will write $G_x$ instead of $G_\omega(x,\cdot)$.

\begin{proof}[Proof of (i)]
Let $\omega$ be a fixed K\"ahler metric in $\mathcal{V}_\mathcal{K}(X,n,m,A)$ and $x$ be a fixed point in $X$. Consider the function
    \[h = -\mathbf{1}_{\{G_x\leq 0\}} +  \frac{1}{V_\omega}\int_{\{G_x\leq 0\}}\omega^n\cdot\]
Approximating $h$ in $L^q(X,\omega^n)$ (for some $q>0$ big) by a sequence of smooth functions $h_k$ such that
\[\sup_X |h_k| \leq 2 \text{ and } \frac{1}{V_\omega} \int_{X} h_k \omega^n = 0.\]
Let $v_k$ be the solution of the equation
    \[\Delta_{\omega} v_k = h_k \quad \text{with} \quad \int_X v_k \omega^n = 0.\]
    By Proposition \ref{prop |delta|<=1 Linfty for v}, $\|v_k\|_{L^\infty} \leq C$ where $C=C(n,m,A)$. Applying Green's formula to smooth function $v_k$ and note that $\int_X v_k \omega^n = 0$, we have
    \[v_k(x) = \frac{1}{V_\omega}\int_X G_x \cdot \frac{h_k}{n} \omega^n.\]
    By \eqref{eq aysmptotic behavior} and H\"older's inequality, we can choose $q$ big enough such that 
    \[\frac{1}{V_\omega}\int_X G_x \cdot \frac{h_k}{n} \omega^n \rightarrow  \frac{1}{V_\omega}\int_X G_x\cdot \frac{h}{n}\omega^n\]
    as $k\rightarrow \infty$. This implies that
    \[\frac{1}{V_\omega}\int_{\{G_x \leq 0\}} G_x \omega^n \leq C.\]
    Moreover, since $\int_X G_x\omega^n = 0$, we deduce that
    \[\frac{1}{V_\omega}\int_X G_x \omega^n \leq C.\]
    Thus, applying Lemma \ref{lemma Delta>-a for omega} to $G_x$ (this function is smooth on $X\setminus \{x\}$ and goes to infinity at $x$), we get
    \[G_x \leq C' \Big(1 + \frac{1}{V_\omega} \int_X G_x \omega^n\Big) \leq C'(1+C)=: C_1 \]
    for $C' = C'(n,m,A)$ (and thus $C_1 = C_1 (n,m,A)$). We complete the proof for (i).
\end{proof}

\begin{proof}[Proof of (ii)]

Define $\mathcal{G}_x:= G_x - C_1 - 1 \leq -1$. Let $\beta \in \Big(0,\frac{m-n}{mn}\Big)$ be a constant. Approximating $\mathcal{G}_x$ in $L^{1+\beta}(X,\omega^n)$ by a sequence of smooth funtions $\mathcal{G}_k$. We note that here we use the asymptotic behavior \eqref{eq aysmptotic behavior} (in fact, we can approximate $\mathcal{G}_x$ in $L^q$ norm for any $q < \frac{n}{n-1}$). 

Let $v_k$ be the solution of the equation
\begin{equation}\label{eq Laplace for v_k}\frac{1}{V_{\omega}}(\omega + dd^c v_k) \wedge \omega^{n-1} = \frac{(-\mathcal{G}_k)^{\beta} \omega^n}{\int_X (-\mathcal{G}_k)^{\beta} \omega^n} \text{ with }\int_X v_k \omega^n = 0 .\end{equation}
Since $v_k$ is a smooth function, by Green's formula, we have 
\begin{align*} v_k(x) 
&= \frac{1}{V_{\omega}} \int_X \mathcal{G}_x dd^c v_k \wedge \omega^{n-1} \\ 
&= \frac{1}{V_{\omega}} \int_X \mathcal{G}_x (\omega+dd^c v_k) \wedge \omega^{n-1} - \frac{1}{V_{\omega}} \int_X \mathcal{G}_x \omega^{n} \\ 
&= - \frac{ V_\omega^{-1}\int_X (-\mathcal{G}_x) \cdot (-\mathcal{G}_k)^{\beta} \omega^n}{ V_\omega^{-1}\int_X (-\mathcal{G}_k)^{\beta}\omega^n} - \frac{1}{V_{\omega}} \int_X \mathcal{G}_x\omega^{n} \cdot
\end{align*}
This is equivalent to
\begin{equation}\label{eq v_k Green}\Big(v_k + \frac{1}{V_\omega}\int_X \mathcal{G}_x \omega^n \Big) \cdot \Big(\frac{1}{V_\omega} \int_X (-\mathcal{G}_k)^\beta\Big) = - \frac{1}{V_\omega}\int_X (-\mathcal{G}_x) \cdot (-\mathcal{G}_k)^{\beta} \omega^n.\end{equation}
Since $\mathcal{G}_x \leq -1$, we can bound
\[\frac{1}{V_\omega}\int_X (-\mathcal{G}_x)^{\beta}\omega^n \leq \frac{1}{V_\omega} \int_X (-\mathcal{G}_x)\omega^n = 1 +C_1.\]
Letting $k\rightarrow \infty$ in \eqref{eq v_k Green} gives
\begin{align*} \frac{1}{V_\omega}\int_X |G_x|^{1+\beta}\omega^n 
&\leq (1+C_1)^2(1+\limsup\limits_{k\rightarrow \infty}\|v_k\|_{L^\infty}).
\end{align*}

It remains to bound $v_k$. Let $\varphi \in \PSH(X,\omega)$ be the unique solution of the equation
\begin{equation}\label{eq varphi for Green}
V_{\omega}^{-1}(\omega+dd^c \varphi)^n = \frac{(-\mathcal{G}_k)^{n\beta} \omega^n}{\int_X(-\mathcal{G}_k)^{n\beta} \omega^n} \text{ with } \sup_X \varphi = 0.\end{equation} Let $\psi$ be an $\omega$-psh function with $\sup_X \psi = 0$. By H\"older inequality, we have
\begin{align}\label{eq control Lm of mu for Green}\frac{1}{V_\omega}\int_X (-\psi)^{m(1-n\beta)}(-\mathcal{G}_k)^{n\beta} \omega^n &\leq \Big(\frac{1}{V_\omega}\int_X (-\mathcal{G}_k) \omega^n\Big)^{n\beta} \Big(\frac{1}{V_\omega} \int_X (-\psi)^m \omega^n\Big)^{1-n\beta} \\ \nonumber &\leq (2+C_1)^{n\beta} A^{m(1-n\beta)}.\end{align}
Again we consider $k$ big enough. Since $\beta < \frac{m-n}{mn}$, we get $m(1-n\beta) > n$. Thus \eqref{eq control Lm of mu for Green} combines with Theorem~\ref{theorem - uniform estimate Guedj-Lu condition} yields
\begin{equation}\label{eq Linfty varphi <= M for green}\|\varphi\|_{L^\infty} \leq M,\end{equation}
where $M=M(n,m,A,\beta)$. Note that here since $\mathcal{G}_x\leq -1$, we can assume $\mathcal{G}_k \leq -1$ and bound the denominator of \eqref{eq varphi for Green} below by $1$.

Bounding $V_\omega^{-1}\int_X (-\mathcal{G}_k)^{n\beta} \omega^n \leq 2+C_1 $ and using AM-GM inequality (see \cite[Theorem 2.12]{Dinew-Kolodzeij-AprioriHessian}), we have
\begin{equation}\label{compare u rho u}
    (\omega + dd^c \varphi)\wedge \omega^{n-1} \geq \frac{(-\mathcal{G}_k)^\beta \omega^{n}}{(2+C_1)^{\frac{1}{n}}} \cdot
\end{equation}
Moreover, since $\mathcal{G}_k \leq -1$, using \eqref{eq Laplace for v_k}, we get 
\[(-\mathcal{G}_k)^\beta \omega^{n} \geq (\omega + dd^c v_k) \wedge \omega^{n-1}.\]
Thus, if we put
\[v: = \varphi - \frac{v_k}{(2+C_1)^{\frac{1}{n}}} - \frac{1}{V_\omega}\int_X \varphi \omega^n,\]
then $\Delta_\omega v \geq -n$ and $\int_X v \omega^n = 0$. Applying Green's formula to smooth function $v$, we get
\begin{equation}\label{eq bound for v Green}
    v(x) \leq \frac{1}{V_\omega} \int_X (-\mathcal{G}_x) \omega^n = 1+C_1.
\end{equation} 
Combining \eqref{eq Linfty varphi <= M for green} and \eqref{eq bound for v Green}, we get the bound for $v_k$. The proof for (ii) when $r < \frac{m-n}{mn}$ follows.

Now, for $k\in \N$, assume that we have the uniform bound for $ V_\omega^{-1}\int_X (-\mathcal{G}_x)^{1+r_k} \omega^n $ for some $r_k \geq 0$. Arguing similarly to the above proof and replacing \eqref{eq control Lm of mu for Green} by the following estimate:
\[
    \frac{1}{V_\omega}\int_X (-\psi)^{m(1-\frac{n\beta}{1+r_k})}(-\mathcal{G}_k)^{n\beta} \omega^n \leq \Big(\frac{1}{V_\omega}\int_X (-\mathcal{G}_k)^{1+r_k} \omega^n\Big)^{n\beta} \Big(\frac{1}{V_\omega} \int_X (-\psi)^m \omega^n\Big)^{1-\frac{n\beta}{1+r_k}},
\]
we can prove the uniform bound for $V_\omega^{-1}\int_X (-\mathcal{G}_x)^{1+r} \omega^n$ with
\[r < r_{k+1}:=\frac{m-n}{mn} \cdot (1+r_k)\cdot\]
Put $r_0:= 0$. Note that 
\[\lim\limits_{k\rightarrow \infty} r_k = \frac{m-n}{mn-m+n}\cdot\]
By induction, we obtain the uniform bound for 
\[r < \frac{m-n}{mn-m+n}\cdot\]
We complete the proof for (ii). 
\end{proof}

\begin{proof}[Proof of (iii)]

Let $\beta > 0$ small. Consider the function $u(y):= (-\mathcal{G}_x (y))^{-\beta}$. Then $u(y)$ is a continuous function and smooth on $X \setminus \{x\}$ with $u(x) = 0$.
We have
\[ d u = \frac{\beta d\mathcal{G}_x}{(-\mathcal{G}_x)^{1+\beta}}\cdot\]
We now use the asymptotic behavior of $\mathcal{G}_x$ in \cite[Chapter 5. Theorem 3.5]{Yau-Schoen-LectureDiffgeo} (note that now we need more explicit computations than \eqref{eq aysmptotic behavior}) to deduce that $d\mathcal{G}_x$ is a $L^p$ form for every $p < \frac{2n}{n-1}$ and $du$ is a $L^q$ form for every $q < \frac{2n}{1-\beta(2n-2)}$. Note that since we can pick $p < \frac{2n}{n-1}$ and $q < \frac{2n}{1-\beta (2n-2)}$ such that $\frac{1}{p} + \frac{1}{q} = 1$, by H\"older inequality, $d u \wedge d^c \mathcal{G}_x$ is a $L^1$ form. Approximating $u$ in $W^{1,q}$ by smooth functions $u_k$ (note that $0\leq u\leq 1$). By Green's formula for the smooth function $u_k$, we have
\begin{align*}u_k(x) &= \frac{1}{V_\omega} \int_X u_k(\omega +dd^c \mathcal{G}_x)\wedge \omega^{n-1} \\ &= \frac{1}{V_\omega} \int_X u_k \omega^n - \frac{1}{V_\omega} \int_X du_k \wedge d^c \mathcal{G}_x \wedge \omega^{n-1} \cdot
\end{align*}
By H\"older inequality, we can prove that
\[\frac{1}{V_\omega} \int_X du_k \wedge d^c \mathcal{G}_x \wedge \omega^{n-1} \rightarrow \frac{1}{V_\omega}\int_X du\wedge d^c \mathcal{G}_x \wedge \omega^{n-1} \text{ as } k \rightarrow \infty.\]
Therefore, let $k\rightarrow \infty$, we get
\[0 = u(x) = \frac{1}{V_\omega} \int_X u \omega^n - \frac{\beta}{V_\omega} \int_X \frac{d\mathcal{G}_x \wedge d^c \mathcal{G}_x \wedge \omega^{n-1}}{(-\mathcal{G}_x)^{1+\beta}}\cdot\]
Now, since $0\leq u \leq 1$, we infer that
\begin{equation}\label{eq estimate d Gx / Gx^1+beta}\frac{1}{V_\omega} \int_X \frac{|\nabla\mathcal{G}_x|_\omega^2 }{(-\mathcal{G}_x)^{1+\beta}} \omega^n \leq \frac{1}{\beta}\cdot\end{equation}

Let $s \in \Big(0,\frac{m-n}{2mn-m+n}\Big)$. Then we can pick $\beta = \beta(s) > 0$ such that $\frac{(1+s)(1+\beta)}{1-s} < \frac{mn}{mn-m+n}\cdot$
Now, by using H\"older inequality, \eqref{eq estimate d Gx / Gx^1+beta}, and part (ii), we have
\begin{align*}
\frac{1}{V_\omega}\int_X |\nabla G_x|^{1+s}_\omega \omega^n &= \frac{1}{V_\omega}\int_X |\nabla \mathcal{G}_x|^{1+s}_\omega \omega^n\\ &= \frac{1}{V_\omega} \int_X \frac{|\nabla \mathcal{G}_x|^{1+s}_\omega}{|\mathcal{G}_x|^{\frac{(1+s)(1+\beta)}{2}}} \cdot |\mathcal{G}_x|^{\frac{(1+s)(1+\beta)}{2}} \omega^n \\
&\leq \Big( \frac{1}{V_\omega} \int_X \frac{|\nabla \mathcal{G}_x|^2_\omega}{|\mathcal{G}_x|^{1+\beta}}\Big)^{\frac{1+s}{2}} \cdot \Big(\frac{1}{V_\omega}\int_X |\mathcal{G}_x|^{\frac{(1+s)(1+\beta)}{1-s}} \omega^n\Big)^{\frac{1-s}{2}} \\
&\leq \frac{1}{\beta} \cdot C_2^{\frac{1-s}{2}}\cdot
\end{align*}
We complete the proof for (iii).
\end{proof}
\begin{remark}
    We note that if we let $m\rightarrow \infty$, then $r$ can take values in $\Big(0,\frac{1}{n-1}\Big)$ and $s$ can take values in $\Big(0,\frac{1}{2n-1} \Big)$. Hence, if we use the exponential condition as in \cite[Theorem 1.1]{DiNezzaGG}, we can get the value ranges as in \cite{Guo-Phong-Song-Sturm,GuedjTo-diameter,GPSS_bodieukien}. 
\end{remark}

By the estimates for Green's functions in Theorem \ref{theorem uniform Green semipositive class}, we just need to follow arguments in  \cite{Guo-Phong-Song-Sturm2}   (see also \cite{GuedjTo-diameter}) to obtain the following Sobolev inequalities. 

\begin{theorem}\label{theorem uniform sobolev for semi-positive class}
    Let $1< r < \frac{mn}{mn-m+n}$ be a constant. Let $\omega \in \mathcal{V}_\mathcal{K}(X,n,m,A)$ be a K\"ahler metric. Then
    \begin{itemize}
        \item[(i)] There exists a constant $C_1 = C_1(n,m,A,r)$ such that 
        \[\Big(\frac{1}{V_\omega} \int_X |u - \overline{u}|^{2r} \omega^n \Big)^{\frac{1}{r}} \leq C_1 \Big(\frac{1}{V_\omega} \int_X du\wedge d^c u \wedge \omega^{n-1}\Big)\]
        for every $u\in W^{1,2}(X)$. Here, $\overline{u} = V_\omega^{-1} \int_X u \omega^n$.
        \item[(ii)] There exists a constant $C_2= C_2(n,m,A,r)$ such that
    \[\Big(\frac{1}{V_\omega} \int_X |u|^{2r} \omega^n \Big)^{\frac{1}{r}} \leq C_2 \Big( \frac{1}{V_\omega} \int_X du\wedge d^c u \wedge \omega^{n-1} + \frac{1}{V_\omega} \int_X |u|^2 \omega^n\Big)\]
        for every $u\in W^{1,2}(X)$.
    \end{itemize}    
\end{theorem}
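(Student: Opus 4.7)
The overall strategy is to derive both (i) and (ii) from Green's representation formula combined with the uniform $L^{1+r}$ and $L^{1+s}$ estimates on $G_\omega$ and $|\nabla G_\omega|$ from Theorem~\ref{theorem uniform Green semipositive class}, following the strategy of \cite{Guo-Phong-Song-Sturm2} and \cite{GuedjTo-diameter}.

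First I would reduce (ii) to (i). For $u \in W^{1,2}(X)$, write $u = (u-\bar u) + \bar u$; since $|\bar u| \le \bigl(V_\omega^{-1}\int_X u^2\,\omega^n\bigr)^{1/2}$ by Cauchy--Schwarz, applying (i) to $u - \bar u$ and estimating $\|\bar u\|_{L^{2r}} = |\bar u|$ trivially yields (ii) with a slightly enlarged constant.

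For (i), the starting point is the Green identity
\[
u(x) - \bar u \;=\; V_\omega^{-1} \int_X G_\omega(x,y)\, dd^c u(y) \wedge \omega^{n-1}(y) \;=\; -\,V_\omega^{-1} \int_X dG_\omega(x,\cdot) \wedge d^c u \wedge \omega^{n-1},
\]
valid for $u\in\mathscr{C}^2$ with $\bar u = 0$. A naive application of Cauchy--Schwarz would demand $|\nabla G_\omega|\in L^2$, which lies outside the admissible range, since the exponent $1+s<2mn/(2mn-m+n)$ given by Theorem~\ref{theorem uniform Green semipositive class}(iii) is strictly less than $2$. To circumvent this, I would apply Green's identity instead to a nonlinear transform $v_q := |u-\bar u|^{q-1}(u-\bar u)$ (or a suitable smooth approximation) for a carefully chosen exponent $q > 1$, expand $dv_q = q\,|u-\bar u|^{q-1}\,du$, and apply H\"older's inequality with exponents matched to the $L^{1+s}$-integrability of $|\nabla G_\omega|$. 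This produces a recursive bound of the shape
\[
\|u-\bar u\|_{L^{2q_1}} \;\le\; C\,\|du\|_{L^2}^{\theta}\,\|u-\bar u\|_{L^{2q_0}}^{1-\theta},
\]
propagating integrability from an exponent $2q_0$ to a strictly larger $2q_1$. Starting from the elementary $L^2$-Poincar\'e estimate $\|u-\bar u\|_{L^2} \le C\|du\|_{L^2}$ (which itself follows from the Green formula together with the $L^{1+r}$-bound on $G_\omega$ from Theorem~\ref{theorem uniform Green semipositive class}(ii)), a Moser-type bootstrap then reaches the full range $1<r<mn/(mn-m+n)$ after finitely many iterations.

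The main obstacle I anticipate is the bookkeeping of admissible exponents: one has to verify that the gain per iteration closes up precisely at the endpoint $mn/(mn-m+n)$, which is the dual Sobolev exponent associated to the $L^{mn/(mn-m+n)}$-bound on $G_\omega$. This is consistent with the classical picture, since letting $m\to\infty$ recovers the sharp range $r<n/(n-1)$ of \cite{Guo-Phong-Song-Sturm2,GuedjTo-diameter,GPSS_bodieukien}. Once the exponents are tracked correctly, the remaining computations are a direct transcription of the Guo--Phong--Song--Sturm argument with the new $L^{1+r}$ and $L^{1+s}$ Green's function estimates plugged in place of the exponential-integrability estimates used there.
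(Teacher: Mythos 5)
Your high-level plan (Green representation, H\"older, reduce (ii) to (i) via the mean, follow \cite{Guo-Phong-Song-Sturm2,GuedjTo-diameter}) matches the paper, which simply cites those references. But the specific mechanism you propose cannot work. In the identity
\[
v_q(x)-\overline{v_q}=-\frac{q}{V_\omega}\int_X |v|^{q-1}\,dG_\omega(x,\cdot)\wedge d^c v\wedge \omega^{n-1},\qquad v:=u-\overline{u},
\]
you want to place $|\nabla v|$ in $L^2$ (to produce the energy) and $|\nabla G_\omega(x,\cdot)|$ in $L^{1+s}$ via Theorem~\ref{theorem uniform Green semipositive class}(iii). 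However $1+s<\frac{2mn}{2mn-m+n}<2$ always, so $\frac{1}{1+s}+\frac{1}{2}>1$ and there is no room left for a third H\"older factor. Introducing the weight $|v|^{q-1}$ only worsens the bookkeeping, it cannot repair a deficit of integrability. In fact the gradient estimate (iii) of Theorem~\ref{theorem uniform Green semipositive class} is not the right input for the Sobolev inequality at all: the admissible range $1<r<\frac{mn}{mn-m+n}$ is precisely the $L^{1+r}$-integrability range of $G_\omega$ from part (ii) of that theorem, not of $\nabla G_\omega$ from part (iii). You actually notice this yourself at the end, but it contradicts the $\nabla G_\omega$-based step you build the iteration on.

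The mechanism the references use, and which the paper's one-line proof points to, applies Cauchy--Schwarz exactly once and never involves $\nabla G_\omega$. By duality one takes $h$ with $\|h\|_{L^{(2r)'}(\omega^n/V_\omega)}\le 1$, forms the Green potential $G_h(y):=\frac{1}{V_\omega}\int_X G_\omega(x,y)h(x)\,\omega^n(x)$, so that $\frac{1}{V_\omega}dd^c G_h\wedge\omega^{n-1}=(h-\overline{h})\,\omega^n/V_\omega$, and writes
\[
\frac{1}{V_\omega}\int_X v\,h\,\omega^n=-\frac{1}{V_\omega}\int_X dG_h\wedge d^c v\wedge\omega^{n-1}\le \|\nabla G_h\|_{L^2}\,\|\nabla v\|_{L^2}.
\]
Integration by parts gives $\|\nabla G_h\|_{L^2}^2=-\frac{1}{V_\omega}\int_X G_h\,(h-\overline{h})\,\omega^n$, which is bounded uniformly using the $L^{1+\rho}$-estimate on $G_\omega$ (with $\rho$ slightly larger than $r-1$) together with the mapping properties of the integral operator with kernel $G_\omega$ (a Schur/Young-type bound). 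Taking the supremum over $h$ gives $\|v\|_{L^{2r}}\le C\|\nabla v\|_{L^2}$ in one shot, with no Moser-type bootstrap at the Sobolev level; the only iteration in the paper occurs earlier, inside the proof of Theorem~\ref{theorem uniform Green semipositive class}(ii) to push the integrability of $G_\omega$ up to the threshold. Your reduction of (ii) to (i) is fine.
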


\section{Sobolev inequalities for semi-positive classes}\label{section sobolev semi-positive}

Let $X$ be a compact K\"ahler manifold and $\omega_X$ be a fixed K\"ahler form on $X$. Let $T$ be a closed positive $(1,1)$-current such that $T = \omega + dd^c u$ where $\omega$ is a semi-positive form and $u$ is a bounded $\omega$-psh function. Assume that $T^n$ is a smooth measure.  Denote $\mu := V_T^{-1} T^n$. For $\rho \in [0,1]$, put $\omega_\rho = \omega + \rho \omega_X$. We define
\[\mu_\rho = V_{\omega_\rho}^{-1}\Big[T^n +  \Big(\frac{V_{\omega_\rho}-V_\omega}{V_{\omega_X}}\Big) \omega_X^n\Big]\cdot\]
Then $\mu_\rho$ is a smooth positive volume form. Consider $u_\rho$ be the unique solution of the equation
\begin{equation}\label{eq MA define u rho}
    V_{\omega_\rho}^{-1}(\omega_\rho + dd^c u_\rho)^n = \mu_\rho \text{ with } \sup_X u_\rho = 0.
\end{equation}
We put $T_\rho= T_{\rho,\omega} := \omega_\rho + dd^c u_\rho$. Thus, $T_\rho \rightarrow T$ as currents when $\rho \rightarrow 0^+$. Since $\omega_\rho$ is a K\"ahler form and $\mu_\rho$ is a smooth positive volume form, by Yau's theorem, $T_\rho$ is a K\"ahler metric. 

We denote by $\mathcal{V}_\text{semi}^*(X,n,m,A)$ be the set of closed positive $(1,1)$-currents $T$ such that the following two conditions hold:
\begin{itemize}
    
\item[(i)] $T = \omega + dd^c u$ where $\omega$ is a semi-positive big form and $u$ is a bounded $\omega$-psh function;

\item[(ii)] The probability measure $\mu = V_T^{-1} T^{n}$ is smooth such that 
    \begin{equation}\label{cond V*}
    \Big(\int_X (-\psi)^m d\mu\Big) \leq A
    \end{equation}
    for every $T$-psh function $\psi$ with $\sup_X \psi = 0$.
\end{itemize}
Recall that a dsh function is a difference of two quasi-psh function (see \cite{DS_tm}). A dsh function $\psi$ is said to be \textit{$T$-plurisubharmonic} ($T$-psh for short) if $T+ dd^c \psi \geq 0$ and $\sup_X \psi <+\infty$. Note that a dsh function is well-defined modulo a pluripolar set.

    We note that when $T \in \mathcal{V}_\text{semi}^*(X,n,m,A)$, we can have an upper bound for $u_\rho$.
\begin{lemma}\label{lemma upper bound u rho} Suppose that $T\in \mathcal{V}_\text{semi}^*(X,n,m,A)$. There exists an upper bound $M$ such that for every $\rho \in [0,1]$, we have
    \[\|u_\rho\|_{L^\infty} \leq M.\]
\end{lemma}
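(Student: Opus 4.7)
The plan is to apply Theorem~\ref{theorem - uniform estimate Guedj-Lu condition} directly to equation \eqref{eq MA define u rho} for each $\rho \in [0,1]$, uniformly. Since the normalization $\sup_X u_\rho = 0$ already handles the upper bound, only a uniform lower $L^\infty$ bound is needed. For $\rho>0$ the form $\omega_\rho = \omega + \rho\omega_X$ is K\"ahler, and for $\rho=0$ it equals the semi-positive big form $\omega$, so in either case $\omega_\rho$ meets the hypothesis of Theorem~\ref{theorem - uniform estimate Guedj-Lu condition}; one also checks immediately that $\mu_\rho$ is a probability measure. The task therefore reduces to producing a constant $A'$, independent of $\rho$, such that
\[\int_X (-\psi)^m\, d\mu_\rho \le A'^m\]
for every $\omega_\rho$-psh $\psi$ with $\sup_X \psi = 0$.

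The first step is to rewrite $\mu_\rho$ as a convex combination of probability measures,
\[\mu_\rho = \frac{V_\omega}{V_{\omega_\rho}}\,\mu + \Big(1-\frac{V_\omega}{V_{\omega_\rho}}\Big)\,V_{\omega_X}^{-1}\omega_X^n,\]
and to observe that, since $T^n$ is smooth, the density $g := V_T^{-1}T^n/\omega_X^n$ is a bounded function on $X$. Together with $V_\omega \le V_{\omega_\rho} \le V_{\omega_1}$ and $V_\omega>0$, this gives $\mu_\rho \le C_T\,\omega_X^n$ with $C_T$ depending on $T$ but not on $\rho$.

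The second step is to compare $\omega_\rho$ with a fixed K\"ahler form: since $\omega$ is fixed there is $C' \ge 1$ with $\omega \le C'\omega_X$, and hence $\omega_\rho \le (C'+1)\omega_X$ for every $\rho\in[0,1]$. Every $\omega_\rho$-psh $\psi$ with $\sup_X \psi = 0$ is therefore (up to the rescaling $\psi \mapsto \psi/(C'+1)$) an $\omega_X$-psh function with $\sup_X = 0$, and by Skoda--Zeriahi uniform integrability of qpsh functions in a fixed K\"ahler class one obtains $\int_X(-\psi)^m\,\omega_X^n \le C''$ with $C''=C''(\omega_X,m)$. Combining both steps yields the desired $\rho$-uniform bound with $A' = (C_T C'')^{1/m}$. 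Applying Theorem~\ref{theorem - uniform estimate Guedj-Lu condition} to \eqref{eq MA define u rho} with this $A'$ produces
\[\|u_\rho\|_{L^\infty} \le C(m,n)\bigl(1 + A'^{3/(n+2m-3)}\bigr) =: M,\]
uniformly in $\rho$.

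No serious obstacle is anticipated: the crux is the uniform integrability of $\omega_\rho$-psh functions against $\mu_\rho$, which is forced by the smoothness of $T^n$ (allowing $M$ to depend on $T$ through $\|g\|_{L^\infty}$) together with the universal comparison $\omega_\rho \le (C'+1)\omega_X$. It is worth noting that the finer integrability hypothesis (ii) in the definition of $\mathcal{V}_{\text{semi}}^*$ is not actually invoked here; only the smoothness part of (ii) enters the proof.
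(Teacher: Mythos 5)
Your proof is correct and is essentially the same as the paper's: both dominate $\mu_\rho$ by a $\rho$-independent measure with bounded density (using that $\mu=V_T^{-1}T^n$ is smooth), observe that $\omega_\rho$-psh functions for $\rho\in[0,1]$ are qpsh with respect to a fixed form (the paper uses $\omega_1$, you use a multiple of $\omega_X$) so that Skoda--Zeriahi gives a $\rho$-uniform moment bound, and then invoke Theorem~\ref{theorem - uniform estimate Guedj-Lu condition}. Your closing remark is accurate and worth keeping in mind: the paper's own argument likewise only uses the smoothness clause of condition (ii) in the definition of $\mathcal{V}^*_{\text{semi}}$, and the resulting $M$ depends on $T$ (through $\|V_T^{-1}T^n/\omega_X^n\|_{L^\infty}$), which is all the lemma claims.
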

\begin{proof}
    Note that $\mu_\rho \leq \mu + V_{\omega_X}^{-1} \omega_X^n$. Let $\psi$ be a $\omega_\rho$-psh function such that $\sup_X \psi = 0$. Then $\psi$ is a $\omega_1$-psh function with $\sup_X \psi = 0$. Therefore, there exists a constant $B$ depending only on $\omega, \mu$ and $\omega_X$ such that
    \[\Big(\int_X (-\psi)^m d \mu_\rho \Big)^\frac{1}{m} \leq \Big(\int_X (-\psi)^m d (\mu + V_{\omega_X}^{-1}\omega_X^n) \Big)^\frac{1}{m} \leq B.\]
    By Theorem~\ref{theorem - uniform estimate Guedj-Lu condition}, we get the bound
    \[\|u_\rho\|_{L^\infty} \leq C B^{\frac{3}{n+2m-3}}=: M\]
    where $C= C(m,n)$.
\end{proof}

Since $\mathcal{V}^*_\text{semi}(X,n,m,A)$ is a natural generalization of $\mathcal{V}_\mathcal{K}(X,n,m,A)$, one may expect that Sobolev inequalities hold for $\mathcal{V}^*_\text{semi}(X,n,m,A)$. Unfortunately, we don't know if our method works for this class. Therefore, we consider the following class, which is still enough for our purpose. We denote by $\mathcal{V}_{\text{semi}}(X,n,m,A)$ be the set of closed positive $(1,1)$-currents $T$ of bounded potentials such that the following four conditions hold:
\begin{itemize}
\item[(i)] $T$ belongs to a semi-positive big cohomology class;

\item[(ii)] $T$ is a smooth K\"ahler form on an open dense Zariski set $U_T$ in $X$;

\item[(iii)] $T^n$ is a smooth measure;

\item[(iv)] There exist a semi-positive big form $\omega$ in the same cohomology class with $T$ and a sequence of positive numbers $(\rho_j)_j$  converging to $0$ so that 
$$\frac{1}{V_{T_{j}}} \int_X (-\psi)^m (T_{j})^n \le A,$$
for every $T_j$-psh function $\psi$ with $\sup_X \psi=0$,
where $T_{j} := T_{\rho_j}$. 
\end{itemize}

\begin{lemma}\label{lemma continuous V^* subset V}Let $T \in \mathcal{V}^*_\text{semi}(X,n,m,A)$. Assume that $u_\rho \rightarrow u$ in $\mathscr{C}^0$ topology as $\rho \rightarrow 0^+$ (thus $T$ has continuous potential). Then $T \in \mathcal{V}_\text{semi}(X,n,m,A+\varepsilon)$ for any $\varepsilon > 0$.
\end{lemma}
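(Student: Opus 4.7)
The plan is to verify the four defining conditions of $\mathcal{V}_{\text{semi}}(X,n,m,A+\varepsilon)$ one at a time. Conditions (i) and (iii) are immediate from the hypotheses on $\mathcal{V}^*_{\text{semi}}(X,n,m,A)$: the class $\{T\}$ is semi-positive and big, and $T^n$ is a smooth measure. For condition (ii), since $u$ is continuous and $T^n=V_T\mu$ with $\mu$ smooth positive, interior elliptic regularity for the Monge–Amp\`ere operator forces $u$ to be smooth on the ample locus of $\{T\}$, which is Zariski open dense because the class is big; there $T$ is a smooth K\"ahler form.

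The main work is (iv). I keep the same $\omega$ furnished by $\mathcal{V}^*_{\text{semi}}$ and argue by contradiction: if the claim fails for some $\varepsilon>0$, one finds $\rho_j \to 0^+$ and $T_{\rho_j}$-psh functions $\psi_j$ with $\sup_X \psi_j = 0$ such that $V_{T_{\rho_j}}^{-1}\int_X (-\psi_j)^m T_{\rho_j}^n \geq A+\varepsilon$. Put $\Phi_j := u_{\rho_j} + \psi_j$; the inequality $T_{\rho_j} + dd^c \psi_j \geq 0$ becomes $\omega_{\rho_j} + dd^c \Phi_j \geq 0$, and since $\omega_{\rho_j} \leq \omega_1$ each $\Phi_j$ is $\omega_1$-psh. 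Lemma~\ref{lemma upper bound u rho} gives a uniform upper bound for the $\Phi_j$, so by the standard $L^1$-compactness of $\omega_1$-psh families we may pass to a subsequence with $\Phi_j \to \Phi_\infty$ in $L^1(X)$; weak convergence of $dd^c \Phi_j$ together with $\omega_{\rho_j}\to \omega$ gives that $\Phi_\infty$ is $\omega$-psh. Uniform convergence $u_{\rho_j}\to u$ then yields $\psi_j \to \psi_\infty := \Phi_\infty - u$ in $L^1$, and $T+dd^c \psi_\infty = \omega + dd^c \Phi_\infty \geq 0$, so $\psi_\infty$ is $T$-psh.

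I next claim $\sup_X \psi_\infty = 0$. Select points $x_j$ with $\psi_j(x_j)=0$ and, up to a subsequence, $x_j \to x_\ast$. Hartogs' upper semicontinuity applied to the $L^1$-convergent, uniformly bounded-above family $(\Phi_j)$ yields $\Phi_\infty(x_\ast) \geq \limsup_j \Phi_j(x_j) = \lim_j u_{\rho_j}(x_j) = u(x_\ast)$, hence $\psi_\infty(x_\ast)\geq 0$; together with the bound $\sup \psi_\infty \leq 0$ (from upper semicontinuity of the sup under $L^1$-convergence of qpsh-type functions), this forces $\sup \psi_\infty = 0$. The defining property of $\mathcal{V}^*_{\text{semi}}(X,n,m,A)$ therefore gives $\int_X (-\psi_\infty)^m\, d\mu \leq A$.

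It remains to pass to the limit in $\int_X (-\psi_j)^m d\mu_{\rho_j}$. Smoothness of $\mu$ yields a uniform $L^\infty$-density bound for $\mu_{\rho_j}$ relative to $\omega_X^n$, and $\omega_1$-psh functions with uniformly bounded sup are compact in $L^p(\omega_X^n)$ for every finite $p$; hence $\psi_j \to \psi_\infty$ in $L^m(\omega_X^n)$ and a standard H\"older argument promotes this to $|\psi_j|^m \to |\psi_\infty|^m$ in $L^1(\omega_X^n)$. Combined with total-variation convergence $\mu_{\rho_j} \to \mu$ (their densities differ by an $O(\lambda_{\rho_j})$-term coming from the $\lambda_{\rho_j}\omega_X^n$ correction), this gives $\int_X (-\psi_j)^m d\mu_{\rho_j} \to \int_X (-\psi_\infty)^m d\mu \leq A$, contradicting the lower bound $A+\varepsilon$. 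The main anticipated obstacle is precisely this limit passage against the moving measures $\mu_{\rho_j}$, which is what forces us to use both the smoothness of $\mu$ and the $L^p$-compactness of $\omega_1$-psh families; pinning down $\sup \psi_\infty = 0$ via Hartogs at a sequence of maximum points is also a point one has to verify carefully, and a naive convex-combination construction that tries to replace $\psi_j$ directly by a $T$-psh function turns out to lose too much mass to give the sharp bound $A+\varepsilon$.
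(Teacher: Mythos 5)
Your proof is correct and follows essentially the same route as the paper's: reduce to the integral condition (iv), use Lemma~\ref{lemma upper bound u rho} to get a uniform upper bound on the $\omega_1$-psh family $\psi_j+u_{\rho_j}$, extract an $L^1$-convergent subsequence, identify the limit as a $T$-psh function, pin down $\sup\psi_\infty=0$ via Hartogs together with the $\mathscr{C}^0$-convergence of $u_\rho$, and pass to the limit in $\int(-\psi_j)^m\,d\mu_{\rho_j}$. The only cosmetic differences are that you argue by contradiction where the paper picks a maximizing $\psi_\rho$ directly, and that you spell out the convergence of $\int(-\psi_j)^m\,d\mu_{\rho_j}$ (and the verification of conditions (i)--(iii)) which the paper leaves implicit.
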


\proof For each $\rho$, since 
\[\Big\{\psi \ T_\rho\text{-psh}: \sup_X \psi = 0\Big\}\]
is a compact set in the $L^m$ topology, we can pick a $T_\rho$-psh function $\psi_\rho$ with $\sup_X \psi_\rho = 0$ such that
\[\frac{1}{V_{T_\rho}}\int_X (-\psi_\rho)^m (T_\rho)^n = A_\rho:= \sup \Big\{  \frac{1}{V_{T_\rho}}\int_X (-\psi)^m (T_\rho)^n : \psi \ T_\rho\text{-psh with }\sup_X \psi = 0\Big\}\cdot\]
 By Lemma~\ref{lemma upper bound u rho}, we have 
\[\Big\{ \psi_\rho + u_\rho : \rho \in (0,1]\Big\} \subset \Big\{\psi \ \omega_1\text{-psh such that } -M\leq \sup_X \psi \leq 0\Big\}\cdot\]
The latter set is a compact set in the $L^1$ topology. Thus, there exist a sequence $(\rho_j)_{j}$ and a $T$-psh function $\psi_0$ such that
\[\psi_{\rho_j}+ u_{\rho_j} \rightarrow \psi_0 + u \text{ as } j \rightarrow \infty.\] 
By Hartogs' lemma and the fact that $u_0$ is continuous, we have
\[\limsup\limits_{j\to \infty} \max_X (\psi_{\rho_j} + u_{\rho_j} - u_0) = \max_X \psi_0.\]
Since $u_{\rho_j}\rightarrow u_0$ in $\mathscr{C}^0$ topology as $j\rightarrow \infty$, we get
\[\limsup\limits_{j\to \infty} \max_X \psi_{\rho_j} = \max_X \psi_0.\]
Hence, $\sup_X \psi_0 = 0$. We infer that
\[\lim\limits_{j\rightarrow \infty} A_{\rho_j} = \frac{1}{V_T} \int_X (-\psi_0)^m T^n   \leq A.\]
The result follows.
\endproof

\begin{remark}
    There is a well-known conjecture (see \cite{Hiep_holder} and \cite[Question 19]{DinewGZ-openproblems}) that the potential of $T$ is continuous. So, we expect that $\mathcal{V}^*_\text{semi}(X,n,m,A) \subset \mathcal{V}_{\text{semi}} (X,n,m,A+\varepsilon)$ holds without the assumption in Lemma~\ref{lemma continuous V^* subset V} (this will simplify our proof). The conjecture was known to be true when the cohomology class of $T$ is integral  by \cite{Dinew_Zhang_stability,Coman-Guedj-Zeriahi}; see also \cite{Do-Vu-log-continuity} for a related work.
\end{remark}

Lemma~\ref{lemma continuous V^* subset V} shows immediately that $\mathcal{V}_\text{semi}$ is an extension of $\mathcal{V}_\mathcal{K}$.

\begin{corollary}\label{corollary V Kahler subset V semi} We have $\mathcal{V}_\mathcal{K}(X,n,m,A) \subset \mathcal{V}_{\text{semi}}(X,n,m,A+\varepsilon)$ for every $\varepsilon > 0$.
\end{corollary}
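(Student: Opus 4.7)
The plan is to reduce to Lemma \ref{lemma continuous V^* subset V} by viewing any $\omega_0 \in \mathcal{V}_\mathcal{K}(X,n,m,A)$ as a current $T := \omega_0$ already carrying the trivial representation $T = \omega_0 + \ddc\cdot 0$, where $\omega_0$ itself serves as the semi-positive big reference form and the bounded potential is $u \equiv 0$. With this choice, the membership $T \in \mathcal{V}^*_{\text{semi}}(X,n,m,A)$ is essentially tautological: the measure $\mu = V_{\omega_0}^{-1}\omega_0^n$ is smooth, $T$-psh functions are exactly the $\omega_0$-psh functions, and the entropy-type bound in condition (ii) of the definition of $\mathcal{V}^*_{\text{semi}}$ coincides verbatim with the defining inequality \eqref{cond V} of $\mathcal{V}_\mathcal{K}$.

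Granted this, the corollary follows once I verify the sole non-trivial hypothesis of Lemma \ref{lemma continuous V^* subset V}, namely that $u_\rho \to u \equiv 0$ in the $\mathscr{C}^0$ topology as $\rho \to 0^+$, where $u_\rho$ is the solution of
\[
V_{\omega_\rho}^{-1}(\omega_\rho + \ddc u_\rho)^n = \mu_\rho, \qquad \sup_X u_\rho = 0,
\]
with $\omega_\rho = \omega_0 + \rho\omega_X$ and $\mu_\rho$ as in \eqref{eq MA define u rho}. Here $\omega_\rho \to \omega_0$ smoothly, and the density $\mu_\rho / \omega_\rho^n$ converges uniformly to $V_{\omega_0}^{-1}$ because the correction term $(V_{\omega_\rho} - V_{\omega_0})/V_{\omega_X}$ tends to $0$. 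Combined with the uniform $L^\infty$-bound of Lemma \ref{lemma upper bound u rho}, a standard subsequential argument shows that every cluster point $u_\infty$ of $(u_\rho)$ is an $\omega_0$-psh function solving $V_{\omega_0}^{-1}(\omega_0 + \ddc u_\infty)^n = V_{\omega_0}^{-1}\omega_0^n$, hence is constant by Calabi's uniqueness.

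To upgrade $L^1$-convergence to $\mathscr{C}^0$-convergence, and in particular to force the normalization $\sup u_\rho \to 0$ to persist in the limit (thereby pinning down the constant to be $0$), I would invoke Yau's a priori $\mathscr{C}^k$-estimates for Monge-Amp\`ere equations, whose constants remain uniform in $\rho$ precisely because $\omega_0$ is genuinely K\"ahler and the densities $\mu_\rho / \omega_\rho^n$ are uniformly bounded in every $\mathscr{C}^k$ norm; alternatively, one can appeal to Kolodziej's $L^\infty$-stability theorem with a minor adjustment for the varying cohomology class $\{\omega_\rho\}$. I expect this uniformity in $\rho$ to be the only mildly delicate point, but the variation of $\{\omega_\rho\}$ is so tame (a perturbation of a fixed K\"ahler class inside the K\"ahler cone) that no real obstacle arises. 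Once $u_\rho \to 0$ uniformly is established, Lemma \ref{lemma continuous V^* subset V} immediately yields $T \in \mathcal{V}_{\text{semi}}(X,n,m,A+\varepsilon)$ for every $\varepsilon > 0$, which is the desired inclusion.
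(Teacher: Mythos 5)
Your proof is correct and follows the same route as the paper: reduce to Lemma~\ref{lemma continuous V^* subset V} by taking $T=\omega_0$ with trivial potential $u\equiv 0$, and verify the $\mathscr{C}^0$-convergence hypothesis $u_\rho\to 0$. The paper treats the latter as ``immediate'' and supplies no detail; your elaboration via Yau's a priori estimates (using that $\omega_0\le\omega_\rho\le\omega_0+\omega_X$ are uniformly K\"ahler and the densities converge smoothly to a positive constant) together with the normalization $\sup_X u_\rho=0$ is the standard argument and fills that gap correctly.
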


We now prove Sobolev inequalities for $T \in \mathcal{V}_{\text{semi}}(X,n,m,A)$. 
Recall that $T$ is a K\"ahler form  on an open Zariski dense subset $U_T$. Denote by $W^{1,2}_T(X)$ the set of Borel functions $u$ on $X$ such that $u, \nabla u$ are locally square integrable in $U_T$ and there hold
\[\int_{U_T} |u|^2 T^n < \infty \text{ and } \int_{U_T} du\wedge d^cu \wedge T^{n-1} < \infty.\]

Here is our main result of this section.

\begin{theorem}\label{theorem uniform sobolev for semi-positive class positive closed current}
    Let $1< r < \frac{mn}{mn-m+n}$ be a constant. Let $T$ be a closed positive current in $\mathcal{V}_\text{semi}(X,n,m,A)$. Then
    \begin{itemize}
        \item[(i)] There exists a constant $C_1 = C_1(n,m,A,r)$ such that 
        \[\Big(\frac{1}{V_T} \int_{U_T} |u - \overline{u}|^{2r} T^n \Big)^{\frac{1}{r}} \leq C_1 \Big(\frac{1}{V_T} \int_{U_T} du\wedge d^c u \wedge T ^{n-1}\Big)\]
        for every $u\in W^{1,2}_T(X)$. Here, $\overline{u} = V_T^{-1}\int_{U_T} u T^n$.
        \item[(ii)] There exists a constant $C_2= C_2(n,m,A,r)$ such that
    \[\Big(\frac{1}{V_T} \int_{U_T} |u|^{2r} T^n \Big)^{\frac{1}{r}} \leq C_2 \Big( \frac{1}{V_T} \int_{U_T} du\wedge d^c u \wedge T^{n-1} + \frac{1}{V_T} \int_{U_T} |u|^{2r} T^n\Big)\]
        for every $u\in W^{1,2}_T(X)$.
    \end{itemize}    
\end{theorem}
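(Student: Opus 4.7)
The plan is to deduce the inequalities for $T$ by approximation from the smooth K\"ahler metrics $T_j := T_{\rho_j}$ furnished by condition (iv) of $\mathcal{V}_\text{semi}(X,n,m,A)$. The entropy bound in (iv) says exactly that each $T_j$ belongs to $\mathcal{V}_\mathcal{K}(X,n,m,A)$, so Theorem \ref{theorem uniform sobolev for semi-positive class} applies to each $T_j$ with constants independent of $j$.

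First I reduce to test functions $u$ that are smooth on all of $X$; the extension to general $u \in W^{1,2}_T(X)$ is by a standard cutoff-and-mollify argument near the analytic set $X \setminus U_T$, which works because $T^n$ is a smooth measure on $X$ (condition (iii)) and $T$ is smooth K\"ahler on the Zariski open set $U_T$. For such a smooth $u$, Theorem \ref{theorem uniform sobolev for semi-positive class}(i) applied with $\omega = T_j$ yields
\begin{align*}
\Big(V_{T_j}^{-1} \int_X |u - \overline{u}_j|^{2r} T_j^n \Big)^{1/r} \leq C_1\, V_{T_j}^{-1} \int_X du \wedge d^c u \wedge T_j^{n-1},
\end{align*}
where $\overline{u}_j := V_{T_j}^{-1} \int_X u\, T_j^n$ and $C_1 = C_1(n,m,A,r)$ is uniform in $j$.

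To pass to the limit $j \to \infty$, I handle the two sides separately. The left side is straightforward: the explicit formula for $\mu_\rho$ gives $T_j^n = T^n + \frac{V_{\omega_{\rho_j}} - V_\omega}{V_{\omega_X}} \omega_X^n$, so $T_j^n \to T^n$ in total variation. Hence $V_{T_j} \to V_T$, $\overline{u}_j \to \overline{u}$, and $\int |u - \overline{u}_j|^{2r} T_j^n \to \int |u - \overline{u}|^{2r} T^n = \int_{U_T} |u - \overline{u}|^{2r} T^n$ (the last equality holds because $T^n$ does not charge $X \setminus U_T$). For the right side I need $T_j^{n-1} \to T^{n-1}$ weakly as currents on $X$; by Bedford-Taylor continuity, this reduces to showing that the potentials $u_{\rho_j}$ are uniformly bounded (Lemma \ref{lemma upper bound u rho}) and converge uniformly to $u$. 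Uniform convergence follows from Kolodziej-type $L^\infty$-stability for the Monge-Amp\`ere equation: the reference forms $\omega_{\rho_j}$ converge smoothly to $\omega$, and the densities $\mu_{\rho_j}$ converge in $L^\infty$ to $V_T^{-1} T^n$ (crucially using the smoothness of $T^n$). Part (ii) follows identically, replacing Theorem \ref{theorem uniform sobolev for semi-positive class}(i) by (ii).

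The main obstacle is establishing the uniform convergence $u_{\rho_j} \to u$, which is where the smoothness of $T^n$ (condition (iii) in the definition of $\mathcal{V}_\text{semi}$) is genuinely essential. Weak convergence of the potentials alone would not suffice to pass $T_j^{n-1}$ to its limit against a smooth test form, and one must invoke the $L^\infty$-stability theory for complex Monge-Amp\`ere equations with smooth (indeed $L^\infty$) right-hand sides together with uniform bounds along the approximating sequence.
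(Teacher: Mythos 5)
Your overall strategy matches the paper's: apply Theorem~\ref{theorem uniform sobolev for semi-positive class} to the regularizations $T_j = T_{\rho_j}$ furnished by condition~(iv) of $\mathcal{V}_\text{semi}(X,n,m,A)$, then pass to the limit. Your treatment of the left-hand side is correct: the explicit formula $T_j^n = T^n + \frac{V_{\omega_{\rho_j}} - V_\omega}{V_{\omega_X}}\,\omega_X^n$ gives total-variation convergence $T_j^n \to T^n$, hence $\overline{u}_j \to \overline{u}$ and convergence of the $L^{2r}$ norms.

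However, there is a genuine gap in how you justify the weak convergence $T_j^{n-1} \to T^{n-1}$. You claim this ``reduces to'' uniform convergence of the potentials $u_{\rho_j} \to u$, to be established by Kolodziej-type $L^\infty$-stability. This is both unnecessary and, as far as is known, unavailable. Ko{\l}odziej stability in its clean form assumes a \emph{fixed K\"ahler} reference form; here $\omega_{\rho_j} = \omega + \rho_j\,\omega_X$ degenerates to a form $\omega$ that is merely semi-positive and big, and uniform $L^\infty$-stability along such a degeneration is essentially equivalent to continuity of the limit potential $u$ on all of $X$. The paper itself flags this: the remark following Lemma~\ref{lemma continuous V^* subset V} points out that continuity of $u$ is an open conjecture in semi-positive big classes (known only for integral classes). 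So the uniform convergence you invoke cannot be assumed. What actually suffices --- and what the paper uses --- is the weaker ``almost decreasing'' convergence of the $u_{\rho_j}$ (\cite[Lemma 3.3]{Vu-diameter}): combined with the uniform $L^\infty$-bound of Lemma~\ref{lemma upper bound u rho}, Bedford--Taylor monotone continuity then gives $(T_j)^{n-1} \to T^{n-1}$ weakly without any continuity hypothesis on $u$.

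A secondary issue: you reduce to test functions smooth on all of $X$, and wave at a ``cutoff-and-mollify'' argument for general $u \in W^{1,2}_T(X)$. A generic $W^{1,2}_T$ function is controlled only in $L^2(T^n)$ and need not be bounded near $X \setminus U_T$, so globally smooth functions on $X$ are not obviously dense in $W^{1,2}_T(X)$, and the cutoff's gradient contribution near the analytic set must be estimated against $T^{n-1}$. The paper (following \cite{Guo-Phong-Song-Sturm2}) handles this in the natural order: first $u$ smooth with compact support in $U_T$ --- which is where the weak convergence of $(T_j)^{n-1}$ is actually invoked, and where only local data on $U_T$ matters --- then bounded $W^{1,2}_T$, then general $W^{1,2}_T$ by truncation. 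You should adopt that three-step structure rather than reverse it.
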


\begin{proof}
The proof is almost identical to that of \cite[Theorem 8.1]{Guo-Phong-Song-Sturm2} given that we have obtained uniform Sobolev inequalities for metrics in $\mathcal{V}_{\mathcal{K}}(X,n,m,A)$.

Let $T \in \mathcal{V}_\text{semi}(X,n,m,A)$ and $(T_j)_j$ be the sequence of smooth K\"ahler metrics as in the definition of $\mathcal{V}_\text{semi}(X,n,m,A)$.  If we have the property that $T_j$ converges smoothly locally to $T$, then the proof of \cite[Theorem 8.1]{Guo-Phong-Song-Sturm2} applies verbatim to our setting. There are three steps in the proof of \cite[Theorem 8.1]{Guo-Phong-Song-Sturm2}: first, (i) was proved for $u$ smooth compactly supported in $U_T$, and secondly, for $u$ bounded in $W^{1,2}_T(X)$, and thirdly for $u \in W^{1,2}_T(X)$ arbitrary. Only the first step requires the local smooth convergence of $T_j$ to $T$. It is sufficient to have that $(T_j)^n, (T_j)^{n-1}$ converge weakly to $T^n$, $T^{n-1}$ respectively. This weak convergence is known to be true because the sequence of potentials of $T_j$ almost decreases to that of $T$ (see \cite[Lemma 3.3]{Vu-diameter} for details). Hence the uniform Sobolev inequalities (i) and (ii) hold. 
\end{proof}

\section{Metrics on the space of singularity types}\label{section relative pluripotential theory}

Let $X$ be a compact K\"ahler manifold and $\omega_X$ be a fixed K\"ahler form on $X$. Let $\theta$ be a closed smooth $(1,1)$-form representing a big cohomology class in $X$. Recall that  $\theta$-psh functions $u$ and $v$ are said to  have the same singularity type if there exists some constant $C$ such that $u-C \leq v \leq u+C$. This is an equivalence relation and the equivalence class of $u$ (as a $\theta$-psh function) is denoted by $[u]=[u]_\theta$.

Following Demailly, we consider the upper envelope
\[V_\theta := \sup\{v \in \PSH(X,\theta) \text{ such that } v\leq 0\}.\]
A $\theta$-psh function $\phi$ is said to be \textit{minimal singularities} if $[\phi]=[V_\theta]$. We say that a closed positive current $T$ in a big cohomology class has \emph{minimal singularities} if $T=\theta+\ddc u$ for some smooth form $\theta$ cohomologous to $T$ and $\theta$-psh function $u$ with minimal singularity. One can see that this definition is independent of the choice of $\theta, u$.  

Let $\phi \in \PSH(X,\theta)$. The following envelope
\[P_\theta [\phi]:= \Big(\sup \{v \in \PSH(X,\theta) \text{ such that } v\leq 0, [v] = [\phi]\}\Big)^*\]
was introduced in \cite{Ross-WittNystrom} and developed further in \cite{Lu-Darvas-DiNezza-mono}. We say that $\phi$ is \textit{model ($\theta$-)potential} if $\phi = P_\theta[\phi]$.  A function  $\phi' \in \PSH(X,\theta)$ is said to have \textit{model type singularity} if there exists a model potential $\phi$ such that $[\phi']= [\phi]$. It is clear by definition that minimal singularity is of model type singularity. Another important example of potentials with model type singularity is $\theta$-psh functions with analytic singularities, see \cite{Lu-Darvas-DiNezza-mono}. We recall the following important observation.

\begin{lemma} (\cite[Lemma 2.2]{Lu-Darvas-DiNezza-logconcave}) \label{le-supenvelope} Let $u,\varphi$ be $\theta$-psh functions such that $\varphi$ is more singular than $P_\theta[u]$. Then we have 
$$\sup_X \varphi = \sup_X (\varphi- P_\theta[u]).$$
\end{lemma}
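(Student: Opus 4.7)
My plan is to prove the two inequalities separately. The direction $\sup_X(\varphi - P_\theta[u]) \ge \sup_X \varphi$ is immediate from the definition of $P_\theta[u]$: every competitor in the defining supremum is non-positive, so $P_\theta[u] \le 0$ on $X$, whence $\varphi - P_\theta[u] \ge \varphi$ pointwise and the desired sup inequality follows after taking the supremum.

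For the reverse inequality, I would set $c := \sup_X \varphi$ and $\tilde\varphi := \varphi - c$, so that $\tilde\varphi \in \PSH(X,\theta)$ satisfies $\sup_X \tilde\varphi = 0$ and is still more singular than $P_\theta[u]$ (a constant shift preserves singularity type). It then suffices to establish the pointwise bound $\tilde\varphi \le P_\theta[u]$ on $X$, because this rearranges to $\varphi - P_\theta[u] \le c$, and taking the supremum over $X$ yields the claim.

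The heart of the argument is a standard envelope manipulation. I would introduce $w := \max(\tilde\varphi,\, P_\theta[u])$, which is $\theta$-psh with $w \le 0$ and $w \ge P_\theta[u]$. If $C \in \R$ is a constant with $\tilde\varphi \le P_\theta[u] + C$, then $w \le P_\theta[u] + \max(C,0)$, so $w$ has exactly the same singularity type as $P_\theta[u]$. I would then invoke the fact that $P_\theta[u]$ is itself a model potential, i.e.\ $P_\theta[P_\theta[u]] = P_\theta[u]$, which characterizes $P_\theta[u]$ as the largest $\theta$-psh function that is non-positive and shares its singularity type. Applied to $w$, this gives $w \le P_\theta[u]$, hence $w = P_\theta[u]$, and in particular $\tilde\varphi \le P_\theta[u]$, as required.

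The only non-formal ingredient is the idempotency $P_\theta[P_\theta[u]] = P_\theta[u]$, i.e.\ that the envelope $P_\theta[u]$ is actually a model potential in its own singularity class. This is a standard but non-trivial output of the theory of envelopes with prescribed singularities developed in \cite{Ross-WittNystrom,Lu-Darvas-DiNezza-mono}, and it is the one step I would expect to require care to cite correctly; once it is granted, the rest of the proof is a direct and essentially tautological envelope bookkeeping.
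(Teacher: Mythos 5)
The paper states this lemma by citing \cite[Lemma 2.2]{Lu-Darvas-DiNezza-logconcave} and does not reproduce a proof, so there is no in-paper argument to compare against; I can only evaluate your proposal on its own merits. Your argument is correct and is the expected one. The easy inequality follows from $P_\theta[u]\le 0$. For the reverse inequality, the reduction via $\tilde\varphi=\varphi-\sup_X\varphi$ is sound, and the max-trick $w=\max(\tilde\varphi,P_\theta[u])$ correctly produces a non-positive $\theta$-psh competitor with $[w]=[P_\theta[u]]$ (your $\max(C,0)$ handles the sign of $C$). The conclusion $w\le P_\theta[u]$ is then precisely the idempotency $P_\theta[P_\theta[u]]=P_\theta[u]$, which, unwound through the paper's definition of $P_\theta[\cdot]$ as a sup, is exactly the statement that every non-positive $\theta$-psh function with the same singularity type as $P_\theta[u]$ lies below $P_\theta[u]$. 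You are right to single this out as the only non-formal input: it is essentially the content of the lemma, and in the envelope theory of \cite{Lu-Darvas-DiNezza-mono} (Theorem 3.8 there) it is proved under the standing assumption $\int_X\theta_u^n>0$, which is not written in the lemma's statement but is satisfied in every place the paper invokes it (in Lemma~\ref{modeltypeSkoda} the relevant $u$ has model type singularity and arises from a big-class Monge--Amp\`ere setup with positive mass). With that proviso acknowledged, the proof is complete.
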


For $u,v\in \PSH(X,\theta)$, we put $\theta_u:= \theta+\ddc u$ and 
$$d_\theta([u]_\theta,[v]_\theta):= 2 \int_X \theta_{\max\{u,v\}}^n - \int_X \theta_u^n - \int_X \theta_v^n.$$
Observe that this definition is independent of the choice of $u,v$ in their singularity types (see \cite{Lu-Darvas-DiNezza-mono,WittNystrom-mono} or \cite{Viet-generalized-nonpluri}). One can think of $d_\theta([u]_\theta, [v]_\theta)$ as a sort of ``distance" on the space of singularity types. It was shown in \cite{Darvas-Lu-DiNezza-singularity-metric,Vu_DoHS-quantitative} that the function $d_\theta(\cdot,\cdot)$ is comparable to the pseudo-metric on the space of singularity types. But we don't need to use this property in the sequel. 
We also use $d_\theta(u,v)$ to denote $d_\theta([u]_\theta, [v]_\theta)$.  We recall the following result which tells us that in the values of $d_\theta(u,v)$ for different $\theta$ are somehow equivalent. 

\begin{proposition} \label{pro-dthetatuongduong} (\cite[Proposition 4.3]{Vu_DoHS-quantitative}) Let $A>0,\delta>0$ be  constants and let  $\theta, \theta'$ be real closed smooth $(1,1)$-forms with $\theta \le \theta' \le A \omega_X$. There exists a constant $C > 0$ independent of $\theta,\theta'$ such that 
$$d_\theta(u,v) \le d_{\theta'}(u,v) \le C \delta^{-1/n} \big(d_{\theta}(u,v)\big)^{1/n},$$
for $\theta$-psh functions $u,v$ with $\int_X \theta_u^n \ge \delta, \int_X \theta_v^n \ge \delta$.
\end{proposition}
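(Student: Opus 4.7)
The plan is to set $\eta := \theta' - \theta$, a smooth $(1,1)$-form with $0 \le \eta \le A\omega_X$, and to compare the two ``distances'' via multilinearity of the non-pluripolar product. Every $\theta$-psh function is also $\theta'$-psh, and for $\phi \in \{u, v, \max\{u,v\}\}$ one has $\theta'_\phi = \theta_\phi + \eta$; expanding the $n$-th powers by the (non-pluripolar) binomial formula yields
$$d_{\theta'}(u,v) \;=\; d_\theta(u,v) \;+\; \sum_{k=1}^{n-1}\binom{n}{k}\int_X \eta^k \wedge \bigl(2\theta_{\max\{u,v\}}^{n-k} - \theta_u^{n-k} - \theta_v^{n-k}\bigr),$$
since the $k=n$ contribution equals $(2-1-1)\int_X \eta^n = 0$. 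The first inequality $d_\theta(u,v) \le d_{\theta'}(u,v)$ is then immediate from monotonicity of mixed non-pluripolar products: as $\max\{u,v\} \ge u$ and $\max\{u,v\} \ge v$, one has $\int_X \eta^k \wedge \theta_{\max\{u,v\}}^{n-k} \ge \int_X \eta^k \wedge \theta_w^{n-k}$ for $w \in \{u,v\}$, so every summand above is non-negative.

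For the second inequality I would use $\eta^k \le A^k \omega_X^k$ to reduce matters to controlling each mixed-mass deficit
$$I_k \;:=\; \int_X \omega_X^k \wedge \bigl(\theta_{\max\{u,v\}}^{n-k} - \theta_w^{n-k}\bigr), \qquad w \in \{u,v\}, \ 1 \le k \le n-1,$$
by $C\delta^{-1/n}\, d_\theta(u,v)^{1/n}$. Two ingredients combine. First, the top-mass deficit is already under control: $\int_X \theta_{\max\{u,v\}}^n - \int_X \theta_w^n \le d_\theta(u,v)$, which follows directly from the definition of $d_\theta$ together with the monotonicity inequality $\int_X \theta_{\max\{u,v\}}^n \ge \int_X \theta_w^n$. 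Second, a Khovanskii--Teissier type H\"older interpolation between top mass and mixed mass, with exponent $1/n$ and base point $\int_X \theta_w^n \ge \delta$, converts a top-mass deficit of size $a$ into a mixed-mass deficit of size at most $C\delta^{-1/n} a^{1/n}$; summing over $k$ yields the claim.

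The main obstacle I anticipate is making the Khovanskii--Teissier step rigorous in the non-pluripolar setting, since $u$ and $v$ need not have bounded potentials. The standard remedy is to approximate $u$ and $v$ from above by canonical sequences of $\theta$-psh functions with minimal singularities (for instance $\max\{u, V_\theta - j\}$ and $\max\{v, V_\theta - j\}$ as $j \to \infty$), for which the mixed products reduce to Bedford--Taylor products and the classical mixed Monge--Amp\`ere inequalities apply; one then passes to the limit by monotone convergence of non-pluripolar mass together with plurifine locality. The uniform lower bounds $\int_X \theta_u^n, \int_X \theta_v^n \ge \delta$ enter precisely at the H\"older interpolation step, providing the normalizing denominators that produce the explicit $\delta^{-1/n}$ factor in the final constant.
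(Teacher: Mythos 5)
This proposition is not proved in the paper at hand; it is quoted directly from \cite[Proposition 4.3]{Vu_DoHS-quantitative}. Your sketch shares the broad skeleton one would expect (binomial expansion of $(\theta'_\phi)^n = (\theta_\phi + \eta)^n$, Witt Nystr\"om monotonicity for the first inequality, a Khovanskii--Teissier type estimate for the second), but two concrete steps in your reduction do not survive scrutiny.

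First, the inequality $\eta := \theta' - \theta \le A\omega_X$ that you take as a starting point is not implied by the hypotheses: the constraint $\theta \le \theta' \le A\omega_X$ bounds $\theta'$ from above but places \emph{no lower bound} on $\theta$, so $\eta$ can be arbitrarily large (as a $(1,1)$-form) while the hypotheses remain satisfied. Since $C$ is required to be independent of $\theta, \theta'$, you cannot absorb this into the constant.

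Second, even if one had a bound $\eta \le A'\omega_X$, the substitution $\eta^k \wedge \bigl(\theta_{\max\{u,v\}}^{n-k} - \theta_w^{n-k}\bigr) \le (A')^k \omega_X^k \wedge \bigl(\theta_{\max\{u,v\}}^{n-k} - \theta_w^{n-k}\bigr)$ is not legitimate: the current $\theta_{\max\{u,v\}}^{n-k} - \theta_w^{n-k}$ is \emph{not} positive (already for $n-k=1$, $\max\{u,v\} - w$ is not quasi-psh), and only its total mass against positive $(k,k)$-test forms is $\ge 0$, by monotonicity. A pointwise inequality between $(1,1)$-forms propagates to wedge powers only when paired against positive currents. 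What one actually needs is a stability estimate for a \emph{fixed} positive $(k,k)$-current $\Theta = \eta^k$: something of the shape
\[
0 \le \int_X \Theta \wedge \theta_\phi^{n-k} - \int_X \Theta \wedge \theta_\psi^{n-k}
\le (n-k)\, \frac{\bigl(\int_X \theta_\phi^n - \int_X \theta_\psi^n\bigr)^{1/n}}{\bigl(\int_X \theta_\psi^n\bigr)^{1/n}}\, \int_X \Theta \wedge \theta_\phi^{n-k}
\]
for $\phi \ge \psi$ $\theta$-psh, combined with the uniform bound $\binom{n}{k}\int_X \eta^k \wedge \theta_{\max}^{n-k} \le \int_X (\theta'_{\max})^n \le A^n \int_X \omega_X^n$, which neatly avoids comparing $\eta$ with $\omega_X$ altogether. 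This is where the $\delta^{-1/n}$ factor genuinely enters, as you anticipated. But that mixed-mass stability lemma (essentially \cite[Theorem 1.2]{Darvas-Lu-DiNezza-singularity-metric}) is itself a substantial result, and naming it ``Khovanskii--Teissier interpolation'' without the precise statement leaves the decisive step unjustified; in the non-pluripolar big setting the usual log-concavity inequalities point in the wrong direction for this purpose, so the exact form matters.
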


The following crucial stability result is a direct consequence of \cite[Theorem 1.3]{Vu_DoHS-quantitative} (we refer also to \cite[Theorem 1.4]{Darvas-Lu-DiNezza-singularity-metric} for the case where $\theta_k= \theta_0$ for every $k$).

\begin{theorem}\label{theorem stability for MA with prescribed singularity}
   Let $p>1$, $A$, $\delta$ be positive constants. Let $(\theta_k)_{k\in \N}$ be a sequence of closed smooth $(1,1)$-forms converging to $\theta_0$ in $\mathscr{C}^0$ as $k\to \infty$ so that $\theta_k \le A \omega_X$ for every $k\ge 0$.  
 Let $f_k \geq 0$ be functions  such that $\|f_k\|_{L^p}$ are uniformly bound and $f_k \rightarrow f_0$ in $L^1$ topology as $k\to \infty$.
 Let $u_k \in \PSH(X,\theta_k)$ be such that $(\theta_k + dd^c u_k)^n = f_k \omega_X^n$ and $\sup_X u_k = 0$.
 Assume that $\int_X (\theta_k + dd^c u_k)^n \geq \delta$ for every $k$, and $d_{A\omega_X}([u_k], [u_0]) \to 0$ as $k \to \infty$.
    Then $u_k \rightarrow u_0$ in capacity (and therefore in $L^1$ topology) as $k\to \infty$.
\end{theorem}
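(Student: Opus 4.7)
The plan is to apply directly the quantitative stability result \cite[Theorem 1.3]{Vu_DoHS-quantitative}, which controls the capacity convergence of solutions to Monge-Amp\`ere equations with prescribed singularities in terms of three inputs: the $\mathscr{C}^0$-closeness of the reference forms, the $L^1$-closeness of the right-hand sides, and the singularity-type distance $d_\theta$ between the prescribed singularities. The task then reduces to verifying that all three of these quantities tend to $0$ along our sequence, uniformly in $k$.

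The first two inputs are given outright: $\theta_k \to \theta_0$ in $\mathscr{C}^0$ is a hypothesis, and $f_k \to f_0$ in $L^1$ with $\|f_k\|_{L^p}$ uniformly bounded is also a hypothesis (the $L^p$-control with $p>1$ is the standard integrability datum that appears in this family of stability estimates, ruling out concentration in the limit).

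For the third input, $d_{A\omega_X}([u_k],[u_0])$ is well-defined since both $u_k$ and $u_0$ are $A\omega_X$-psh owing to $\theta_k,\theta_0 \le A\omega_X$, and it tends to $0$ by hypothesis. If the quantitative stability theorem formulates its singularity-type input with respect to the ``natural'' varying reference form $\theta_k$, I convert via Proposition \ref{pro-dthetatuongduong} applied to the pair $\theta_k \le A\omega_X$. The non-degeneracy assumption of that proposition is exactly $\int_X(\theta_k + dd^c u_k)^n \ge \delta$, which is given, and one obtains
\[ d_{\theta_k}(u_k,u_0) \le d_{A\omega_X}(u_k,u_0) \le C\,\delta^{-1/n}\, d_{\theta_k}(u_k,u_0)^{1/n}, \]
so convergence to $0$ of either distance forces convergence to $0$ of the other, with a controlled power loss.

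Plugging these three verified inputs into \cite[Theorem 1.3]{Vu_DoHS-quantitative} yields $u_k \to u_0$ in capacity. The $L^1$-convergence stated in parentheses is then automatic, since $\{u_k\} \cup \{u_0\}$ lies in the $L^1$-compact family of $A\omega_X$-psh functions with supremum $0$, and on such a normalized compact family convergence in capacity is stronger than $L^1$-convergence. The main point that requires care is uniformity in $k$ of the constants in the stability estimate; this ought not to be an obstacle because those constants depend on the reference form only through an upper bound (the fixed $A\omega_X$) and on the density only through its $L^p$-norm (uniformly bounded by hypothesis), so they depend only on $(n,p,A,\delta,\omega_X)$ and the limit form $\theta_0$, not on $k$ individually.
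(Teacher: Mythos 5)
Your approach is the same as the paper's: the paper states this result as ``a direct consequence of \cite[Theorem 1.3]{Vu_DoHS-quantitative}'' and gives no further proof, and your proposal is precisely to invoke that theorem after checking its hypotheses (form convergence, $L^1$ plus uniform $L^p$ of the densities, decay of the singularity-type pseudo-distance, mass non-degeneracy). The conversion between $d_{\theta_k}$ and $d_{A\omega_X}$ via Proposition~\ref{pro-dthetatuongduong} that you flag as possibly needed is exactly the auxiliary tool the paper records for this purpose, so the two arguments are aligned in detail as well.
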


Recall that a sequence of Borel functions $(u_k)_{k\in \N}$ is said to \textit{converge to a function $u$ in capacity} if for every $\varepsilon > 0$, $\capa_{\omega_X} (\{|u_k - u|\geq \varphi \})$ converges to $0$ as $k\rightarrow \infty$. Here, $\capa_{\omega_X}$ is the Monge-Amp\`ere capacity with respect to $\omega_X$, and defined by
\[\capa_{\omega_X}(E):= \sup_{\{\psi \in \PSH(X,\omega_X), -1\leq \psi \leq 0\}} \frac{1}{V_{\omega_X}}\int_E (\omega_X + dd^c \psi)^n\]
for every Borel subset $E$ of $X$.

In our application later, we will apply Theorem \ref{theorem stability for MA with prescribed singularity} to a specific situation in which the $L^1$ convergence of $u_k$ to $u_0$ is indeed well-known. The crucial point is the convergence in capacity of $u_k$ to $u_0$. The role of this property in our method can be seen through the following result which is similar to  \cite[Theorem 4.9]{Viet-generalized-nonpluri} and \cite[Theorem 2.3]{Lu-Darvas-DiNezza-mono}.

\begin{theorem}\label{theorem convergence in capacity}
    Let $X$ be a compact K\"ahler manifold of dimension $n$. Let $T_{jk}$ be closed positive $(1,1)$-currents for $1\leq j \leq m \leq n$, $k\in \N$ such that $T_{jk}$ converges to $T_j$ as currents when $k\rightarrow \infty$. Let $(U_\alpha)_\alpha$ be a finite covering of $X$ by open subsets such that $T_{jk} = dd^c u_{jk,\alpha}$ and $T_{j} = dd^c u_{j,\alpha}$ on $U_\alpha$ for every $\alpha$. Assume that
    \begin{itemize}
        \item[(i)] for every $\alpha$ and $j$, $u_{jk,\alpha} \rightarrow u_{j,\alpha}$ in capacity as $k\rightarrow \infty$;
        \item[(ii)] $\left \langle \{T_{1k}\} \wedge \cdots \wedge \{T_{mk}\} \right \rangle \rightarrow \left \langle \{T_{1}\} \wedge \cdots \wedge \{T_{m}\} \right \rangle$;
        \item[(iii)] $T_1,\ldots,T_m$ are of minimal singularities.
    \end{itemize} Then
    \[\left \langle T_{1k} \wedge \cdots \wedge T_{mk} \right \rangle \rightarrow \left \langle T_{1} \wedge \cdots \wedge T_{m} \right \rangle \text{ as currents.}\]
\end{theorem}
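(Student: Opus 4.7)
The plan is to follow the strategy of \cite[Theorem 4.9]{Viet-generalized-nonpluri} and \cite[Theorem 2.3]{Lu-Darvas-DiNezza-mono}, combining plurifine locality of the non-pluripolar product with the Bedford-Taylor/Xing continuity theorem for Monge-Amp\`ere wedge products of bounded psh functions under convergence in capacity. Since weak convergence of currents can be tested locally against compactly supported forms, I fix a chart $U_\alpha$ and work with the local potentials $u_{jk,\alpha}$, $u_{j,\alpha}$. For $M>0$, I introduce the truncations $v_{jk,\alpha}^M := \max(u_{jk,\alpha},-M)$ and $v_{j,\alpha}^M := \max(u_{j,\alpha},-M)$, which are uniformly bounded psh functions on $U_\alpha$. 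From (i) and the Lipschitz continuity of $\max(\cdot,-M)$, $v_{jk,\alpha}^M \to v_{j,\alpha}^M$ in capacity as $k\to \infty$, and Xing's continuity theorem gives weak convergence
$$\bigwedge_{j=1}^m \ddc v_{jk,\alpha}^M \to \bigwedge_{j=1}^m \ddc v_{j,\alpha}^M \quad \text{on } U_\alpha.$$

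Next, by plurifine locality of the non-pluripolar product, on the plurifine open set $\cap_j\{u_{jk,\alpha} > -M\}$ one has $\mathbf{1}_{\cap_j\{u_{jk,\alpha}>-M\}} \langle T_{1k}\wedge \cdots \wedge T_{mk}\rangle = \mathbf{1}_{\cap_j\{u_{jk,\alpha}>-M\}} \bigwedge_j \ddc v_{jk,\alpha}^M$, and analogously for the limit currents. The symmetric difference between $\cap_j\{u_{jk,\alpha}>-M\}$ and $\cap_j \{u_{j,\alpha}>-M+\varepsilon\}$ is contained in $\cup_j\{|u_{jk,\alpha}-u_{j,\alpha}|>\varepsilon\}$ and hence has $\omega_X$-capacity tending to $0$ as $k\to\infty$ (for any $\varepsilon>0$) by (i). Since the $v_{jk,\alpha}^M$ are uniformly bounded by $M$, the Chern-Levine-Nirenberg inequality gives that the mass of $\bigwedge_j \ddc v_{jk,\alpha}^M$ on such a set is bounded by $C\,M^m$ times this capacity, which is negligible in the limit. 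Combining these two observations, for any smooth nonnegative test $(n-m,n-m)$-form $\phi$ supported in $\cap_j\{u_{j,\alpha}>-M\}$,
$$\lim_{k\to\infty} \int \phi \wedge \langle T_{1k}\wedge\cdots\wedge T_{mk}\rangle = \int \phi \wedge \mathbf{1}_{\cap_j\{u_{j,\alpha}>-M\}} \langle T_1 \wedge \cdots \wedge T_m\rangle.$$

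To upgrade this localized convergence to global weak convergence, I extract a subsequential weak limit $S$ of $\langle T_{1k}\wedge \cdots \wedge T_{mk}\rangle$, which exists by the uniform cohomological mass bound provided by (ii). Letting $M\to\infty$ and using that $\langle T_1\wedge\cdots\wedge T_m\rangle$ is non-pluripolar (so $\bigcup_M \cap_j\{u_{j,\alpha}>-M\}$ carries its full mass), I obtain $S \ge \langle T_1\wedge\cdots\wedge T_m\rangle$ as positive $(m,m)$-currents. On the other hand, hypothesis (ii) forces $\int S \wedge \omega_X^{n-m} = \int \langle T_1\wedge\cdots\wedge T_m\rangle \wedge \omega_X^{n-m}$; together with the minimal-singularity assumption (iii), which by \cite{BEGZ} guarantees that this common total mass equals the volume of the cohomology class and is therefore finite and \emph{maximal}, this equality of masses combined with the pointwise inequality forces $S = \langle T_1\wedge\cdots\wedge T_m\rangle$. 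Since every weak limit coincides with the target, the full sequence converges. The main obstacle I expect is the uniform mass control of the truncated Bedford-Taylor products on sets of small capacity, which is the step where minimal singularity (preventing mass loss in the limit) and hypothesis (ii) (pinning down the total mass) must be combined carefully.
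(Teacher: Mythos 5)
Your proposal follows essentially the same strategy as the paper's proof: truncate the local potentials, use convergence in capacity together with the Bedford--Taylor/Xing continuity theorem (the paper's Lemma 5.5) to get the liminf inequality $\liminf_k \int \langle T_{1k}\wedge\cdots\wedge T_{mk}\rangle\cdot\Phi \ge \int\langle T_1\wedge\cdots\wedge T_m\rangle\cdot\Phi$, then extract a weak limit $S \ge \langle T_1\wedge\cdots\wedge T_m\rangle$ using the mass bound from (ii), and finally force equality via the minimal-singularity hypothesis (iii). The one place your wording is slightly off is the assertion that ``(ii) forces $\int S\wedge\omega_X^{n-m}=\int\langle T_1\wedge\cdots\wedge T_m\rangle\wedge\omega_X^{n-m}$'': (ii) alone only yields $\{\langle T_{1k}\wedge\cdots\wedge T_{mk}\rangle\}\le\langle\{T_{1k}\}\wedge\cdots\wedge\{T_{mk}\}\rangle$ and hence an upper bound on the mass of $S$, and you need (iii) (as in \cite[Theorem 4.4]{Viet-generalized-nonpluri} or the BEGZ identification you cite) to turn that upper bound into the mass of $\langle T_1\wedge\cdots\wedge T_m\rangle$ before concluding — but since you invoke (iii) immediately afterward and combine it with the pointwise inequality $S\ge\langle T_1\wedge\cdots\wedge T_m\rangle$, the ingredients and the conclusion match the paper's argument.
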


Here, the product of cohomology classes in (ii)  was introduced in \cite[Definition 1.17]{BEGZ}. We note that the capacity here is the local Monge-Amp\`ere capacity of Bedford-Taylor \cite{Bedford_Taylor_82} and since $\omega_X$ is a K\"ahler form, the global capacity and local capacity are equivalent. The proof of this theorem is similar to \cite[Theorem 2.3]{Lu-Darvas-DiNezza-mono}, we present here for reader's convenience. We first recall the following classical lemma.

\begin{lemma}\label{lemma local converge in capacity} (\cite[Theorem 4.26]{GZbook} or \cite[Theorem 1.11]{Kolodziej05})
    Let $\Omega \subset \mathbb{C}^n$ is an open subset. Suppose that $(f_k)_k$ are uniformly bounded quasi-continuous functions which converge in capacity to another quasi-continuous function $f$. Let $(u_k^1),\ldots,(u_k^m)$ be uniformly bounded plurisubharmonic functions on $\Omega$ converging in capacity to $u^1,\ldots,u^m$ respectively for $1\leq m \leq n$. Then
    \[f_k dd^c u_{k}^1 \wedge \cdots \wedge dd^c u_k^m \rightarrow f dd^c u^1 \wedge\cdots \wedge dd^c u^m \text{ as currents.}\]
\end{lemma}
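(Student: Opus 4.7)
The lemma is a classical Bedford--Taylor convergence statement (see \cite{GZbook,Kolodziej05}), and I would reproduce the standard proof by induction on $m$, built on two pillars: the local Chern--Levine--Nirenberg (CLN) inequality, which bounds the mass on a relatively compact $E\Subset\Omega$ of a Monge--Amp\`ere product of uniformly $L^\infty$-bounded psh functions by a constant times $\capacity(E)$, and the quasi-continuity of bounded psh functions (together with the assumed quasi-continuity of $f_k,f$). The base case $m=0$ reduces to $f_k\to f$ in $L^1_\loc(\Omega)$: the hypothesis $\capacity(\{|f_k-f|>\varepsilon\})\to 0$, the domination of Lebesgue measure by capacity, and the uniform bound $\|f_k\|_{L^\infty}\le M$ give this at once.

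For the inductive step from $m$ to $m+1$, fix a smooth compactly supported test form $\chi$ and set $S_k:=dd^c u_k^2\wedge\cdots\wedge dd^c u_k^{m+1}$ and $S:=dd^c u^2\wedge\cdots\wedge dd^c u^{m+1}$. I would first apply the inductive hypothesis with the quasi-continuous uniformly bounded sequence $u_k^1\to u^1$ in capacity playing the role of $f_k$ and with the $m$ Monge--Amp\`ere factors $u_k^2,\ldots,u_k^{m+1}$, to obtain $u_k^1 S_k\to u^1 S$ as currents. Applying $dd^c$ (continuous for the weak topology on currents) then yields $dd^c u_k^1\wedge S_k\to dd^c u^1\wedge S$. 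Next, I would decompose
\[
\int \chi f_k\,dd^c u_k^1\wedge S_k - \int \chi f\,dd^c u^1\wedge S \;=\; \int \chi(f_k-f)\,dd^c u_k^1\wedge S_k \;+\; \int \chi f\,\bigl(dd^c u_k^1\wedge S_k - dd^c u^1\wedge S\bigr).
\]
For the first term, given $\delta>0$ use the quasi-continuity of $f,f_k$ and the capacity convergence to produce a single open $U\subset\Omega$ with $\capacity(U)<\delta$ outside of which $f_k\to f$ uniformly on $\supp\chi$; the contribution on the good part is bounded by the uniform convergence rate times the uniformly bounded CLN mass of $dd^c u_k^1\wedge S_k$ on $\supp\chi$, while the contribution on $U\cap \supp\chi$ is bounded by $2M$ times a CLN multiple of $\capacity(U)$. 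For the second term, use quasi-continuity of $f$ to replace it uniformly (off a set of small capacity) by a continuous, then smooth, function $\tilde f$: the replacement error is again handled by CLN on the bad set, and $\int \chi\tilde f\,(dd^c u_k^1\wedge S_k - dd^c u^1\wedge S)\to 0$ by weak convergence of currents against the smooth form $\chi\tilde f$.

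The main obstacle will be the uniform-in-$k$ choice of the small-capacity set $U$ in the first term: one needs the entire sequence $(f_k)$, not merely its limit $f$, to be ``nearly continuous'' outside a single open set of arbitrarily small capacity. I would settle this by a diagonal/Egorov-type argument combining convergence in capacity $f_k\to f$ with the individual quasi-continuity of each $f_k$ (possibly after passing to a subsequence, which suffices since the target convergence is of a unique limit). Once this uniformity is in place, both terms in the decomposition go to zero and the induction closes.
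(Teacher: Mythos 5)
The paper does not actually prove this lemma --- it is quoted verbatim from the cited references (Guedj--Zeriahi, Theorem 4.26; Ko{\l}odziej, Theorem 1.11) --- and your argument is exactly the standard proof given there: induction on the number of Monge--Amp\`ere factors, quasi-continuity of bounded psh functions, writing $dd^c u_k^1\wedge S_k=dd^c(u_k^1S_k)$ to pass the convergence through $dd^c$, and the domination of the masses of products of uniformly bounded psh functions by a constant times capacity; so your proposal is correct and in line with the paper's (cited) proof. The only remark is that the ``main obstacle'' you flag is avoidable: for the term $\int\chi(f_k-f)\,dd^c u_k^1\wedge S_k$ one simply splits the integral over $\{|f_k-f|>\varepsilon\}$ and its complement, bounding the first piece by $2M$ times a capacity-domination constant times $\capacity(\{|f_k-f|>\varepsilon\}\cap\supp\chi)\to 0$ and the second by $\varepsilon$ times the uniform local mass bound, so no Egorov-type uniform exceptional set or subsequence extraction is needed.
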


\begin{proof}[Proof of Theorem~\ref{theorem convergence in capacity}]
    Let $\Phi$ be a weakly positive $(n-m,n-m)$-form on $X$. Using a partition of unity for $(U_\alpha)_\alpha$, we can write $\Phi = \sum_\alpha \Phi_\alpha$ where $\Phi_\alpha$ is a weakly positive form on $X$ has support on $U_\alpha$. For $C > 0$, let $u_{jk,\alpha}^C := \max (u_{jk,\alpha},-C)$ and $u_{j,\alpha}^C = \max (u_{j,\alpha},-C)$. Define also $f_{k,\alpha}^C := \mathbf{1}_{\cap_{j=1}^m\{u_{jk,\alpha} > -C\}}$ and $f_{\alpha}^C := \mathbf{1}_{\cap_{j=1}^m\{u_{j,\alpha} > -C\}}$. Now, by the definition of non-pluripolar product and Lemma~\ref{lemma local converge in capacity}, we infer that
    \[\liminf_{k\rightarrow \infty} \int_X \left \langle T_{1k} \wedge \cdots \wedge T_{mk} \right \rangle \cdot \Phi _\alpha \geq \int_X \left \langle T_{1} \wedge \cdots \wedge T_{m} \right \rangle \cdot \Phi _\alpha.\]
    Taking the sum over $\alpha$, we get
    \[\liminf_{k\rightarrow \infty} \int_X \left \langle T_{1k} \wedge \cdots \wedge T_{mk} \right \rangle \cdot \Phi  \geq \int_X \left \langle T_{1} \wedge \cdots \wedge T_{m} \right \rangle \cdot \Phi .\]

    By condition (ii), we can bound uniformly the mass of $\left \langle T_{1k} \wedge \cdots \wedge T_{mk} \right \rangle$. Thus, we can consider a limit current $S$ of this sequence, and because of above argument, $S \geq \left \langle T_{1} \wedge \cdots \wedge T_{m} \right \rangle $. Condition (iii) implies that $\{S\} \leq \{\left \langle T_{1} \wedge \cdots \wedge T_{m} \right \rangle\}$ (see \cite[Theorem 4.4]{Viet-generalized-nonpluri}). Thus, $S$ must be equal to $\left \langle T_{1} \wedge \cdots \wedge T_{m} \right \rangle$. We complete the proof.
\end{proof}

Let $S = \theta + dd^c u$ be a closed positive $(1,1)$-current in the cohomology class $\{\theta\}$. We said $S$ is of \textit{model type singularity} if its potential $u$ is of model type singularity as a $\theta$-psh function. Given a positive closed current $S$ of model type singularity, we can define
\[\kappa_\theta(S):= \| u - P_\theta[u]\|_{L^\infty},\]
where $u\in \PSH(X,\theta)$ such that $S = \theta + dd^c u$ and $\sup_X u = 0$.

Recall that a dsh function $\psi$ is said to be $S$-psh if $S+ dd^c \psi \geq 0$ and $\sup_X \psi <+\infty$. If $\psi$ is $S$-psh for some current $S$ of model type singularity, we said $\psi$ is a \textit{good dsh function}. 
We now prove an integrability property for these functions.

\begin{lemma}\label{modeltypeSkoda}
    Let $A,B$ be constants. Let $\theta$ be a closed $(1,1)$-form with $\theta \le B \omega_X$. For every current $S$ in $\{\theta\}$ of model type singularity such that $\kappa_\theta(S)\leq A$ and for every $S$-psh function $\psi$ with $\sup_X \psi = 0$, there exist positive constants $A_1,A_2$ depending only on $\omega_X, B$ such that 
    \[\int_X e^{-A_1 \psi} \omega_X^n \leq  A_2 e^{A_1 A}.\]
\end{lemma}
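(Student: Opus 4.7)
The plan is to reduce the bound on $\int_X e^{-A_1\psi}\omega_X^n$ to the classical uniform Skoda integrability for $B\omega_X$-psh functions, with the model-type singularity hypothesis $\kappa_\theta(S) \le A$ used only to control the normalization of a suitable auxiliary $\theta$-psh function. Write $S = \theta + dd^c u$ for the $\theta$-psh potential $u$ with $\sup_X u = 0$ appearing in the definition of $\kappa_\theta(S)$.

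First I would observe that $u+\psi$ is itself $\theta$-psh: the $S$-psh condition gives $\theta + dd^c(u+\psi) = S + dd^c\psi \ge 0$, so $u+\psi \in \PSH(X,\theta) \subseteq \PSH(X, B\omega_X)$. Since $u \le 0$ pointwise, we have $-\psi \le -(u+\psi)$, hence $e^{-A_1\psi} \le e^{-A_1(u+\psi)}$ for every $A_1>0$, reducing the matter to an exponential integral of the $B\omega_X$-psh function $u+\psi$.

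Next I would bound $\sup_X(u+\psi)$ from below using the model potential. By definition of $P_\theta[u]$ we have $u \le P_\theta[u]$, and $\|u - P_\theta[u]\|_{L^\infty} \le A$ by the hypothesis $\kappa_\theta(S) \le A$; combined with $\psi \le 0$ this yields $u+\psi \le P_\theta[u]$, so $u+\psi$ is more singular than $P_\theta[u]$. Lemma \ref{le-supenvelope} applied to the $\theta$-psh function $u+\psi$ then gives
\[
\sup_X(u+\psi) \;=\; \sup_X\!\bigl((u-P_\theta[u]) + \psi\bigr) \;\ge\; -A + \sup_X\psi \;=\; -A.
\]

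Finally I would apply the standard uniform Skoda inequality to $v := u+\psi - \sup_X(u+\psi)$, which is $B\omega_X$-psh with $\sup_X v = 0$ (the usual proof, via Tian's $\alpha$-invariant or Zeriahi's uniform Skoda applied to $v/\max(B,1)$ as an $\omega_X$-psh function with supremum zero, produces constants $A_1,A_2>0$ depending only on $\omega_X,B$ with $\int_X e^{-A_1 v}\omega_X^n \le A_2$). Combining the three steps yields
\[
\int_X e^{-A_1\psi}\omega_X^n \;\le\; e^{-A_1\sup_X(u+\psi)} \int_X e^{-A_1 v}\omega_X^n \;\le\; A_2\, e^{A_1 A},
\]
which is the desired estimate. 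The one delicate point is the second step: the naive bound $\sup_X(u+\psi) \le \sup_X u + \sup_X \psi = 0$ goes in the wrong direction when one rescales in Skoda (the factor $e^{-A_1\sup_X(u+\psi)}$ would then be $\ge 1$ but uncontrolled from above when $\psi$ is very negative on average); the model-type singularity assumption, through Lemma \ref{le-supenvelope}, is precisely what provides the required lower bound $\sup_X(u+\psi) \ge -A$, and this is where the factor $e^{A_1 A}$ in the conclusion comes from.
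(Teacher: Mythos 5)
Your proposal is correct and follows essentially the same route as the paper's proof: set $\varphi = u+\psi$, use $u\le 0$ to get $e^{-A_1\psi}\le e^{-A_1\varphi}$, invoke Lemma~\ref{le-supenvelope} together with $\|u-P_\theta[u]\|_{L^\infty}\le A$ to obtain $\sup_X\varphi\ge -A$, and conclude by uniform Skoda for $B\omega_X$-psh functions. The only cosmetic difference is how the lower bound on $\sup_X\varphi$ is packaged (you split as $(u-P_\theta[u])+\psi$, the paper as $(\varphi-u)+(u-P_\theta[u])$), and you correctly record the dependence of $A_1,A_2$ on $\omega_X,B$ (the paper's proof has a slip writing ``$\omega_X,A$'' there).
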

\begin{proof}  Write $S = \theta + dd^c u$ with $\sup_X u = 0$. Since $\psi$ is $S$-psh, $\varphi:= \psi+u$ is a $\theta$-psh function. Observe $\varphi \le u \le P_\theta[u] \le 0$ because $\sup_X \psi =0$.  Using $\kappa_\theta (T) \leq A$, we have
\[-A + P_\theta[u] \leq u \leq A + P_\theta[u].\]
This coupled with  Lemma \ref{le-supenvelope} gives
$$\sup \varphi = \sup_X (\varphi - P_\theta[u]) \ge \sup_X (\varphi -u) + \inf_X(u- P_\theta[u]) \ge -A. $$
By this and integrability property of $\omega_X$-psh functions, there exist positive constants $A_1,A_2$ depending only on $\omega_X, A$ such that 
    \[\int_X e^{-A_1 \psi} \omega_X^n  \leq \int_X e^{-A_1 \varphi} \omega_X^n \leq e^{A_1 A}\int_X e^{-A_1 (\varphi-\sup_X \varphi)} \omega_X^n \leq A_2 e^{A_1 A}.\]
 The proof is completed.
\end{proof}

\section{\texorpdfstring{$L^\infty$}--estimate for measures dominated by capacity}

The goal of this section is to present a $L^\infty$-estimate for solutions of complex Monge-Amp\`ere equations in the prescribed singularity setting. This was more or less settled in \cite{Lu-Darvas-DiNezza-logconcave,Lu-Darvas-DiNezza-logconcave,DiNezzaGG} developing further \cite{BEGZ,Kolodziej_Acta}, see also \cite{Vu_Do-MA}. The main point is to obtain explicit estimates which are crucial for our application later. We underline that we will not only need to use this $L^\infty$-estimate for measures with density of uniform $L^p$-bound but also for measures satisfying only Ko{\l}odziej's capacity conditions. 

Let $\theta$ be a smooth closed form in a big cohomology class and let $\phi$ be a  $\theta$-psh function. Recall that the (normalized) Monge-Amp\`ere capacity $\capa_\phi$ with respect to the potential $\phi$, defined by
\[\capa_{\phi}(E):= \sup_{\{\psi \in \PSH(X,\theta), \phi-1\leq \psi \leq \phi\}} \frac{1}{\int_X \theta_\phi^n} \int_E \theta_\psi^n\]
for every Borel subset $E$ of $X$. This generalizes the notion of $\capa_{\omega_X}$ in \cite{Bedford_Taylor_82,Kolodziej08holder}.  We refer to \cite{DiNezzaLu-capacity,Lu-Darvas-DiNezza-mono} for more information about this capacity.

\begin{theorem}\label{theorem Linfty for MA with pres sing}
    Let $\theta$ be a closed $(1,1)$-form in a big cohomology class.  Let $\phi$ be a $\theta$-psh function such that $\phi$ is a model potential. Let $\mu$ be a probability measure on $X$. Let $A,B,\delta$ be positive constants. Suppose that $\mu$ satisfies the following conditions:
    \begin{itemize}
        \item[(i)] $\mu(E) \leq A [\capa_\phi(E)]^{1+\delta}$ for every Borel subset $E $ of $X$;
        \item[(ii)] $\int_X (-u) d\mu \leq B$ for every $\theta$-psh function $u$ with $\sup_X u = 0$.
    \end{itemize}
    Let $u$ be the unique solution of the equation $\theta_u^n = (\int_X \theta_u^n) \cdot \mu$ with $\sup_X u = 0$. Then $[u] = [\phi]$ and
   $$ \kappa_\theta(\theta + dd^c u) = \|u-\phi\|_{L^\infty} \leq 
 2^{1+n/\delta}A^{1/\delta}B+1+ (1-2^{-\delta})^{-1}.$$
\end{theorem}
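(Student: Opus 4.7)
The approach I would take is a Kołodziej-type De Giorgi iteration on the sublevel sets of $w := \phi - u$, adapted to the prescribed-singularity framework.

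\textbf{Step 1} (Singularity type). I would first establish $[u]=[\phi]$, whence $V_u=V_\phi$ and $P_\theta[u]=P_\theta[\phi]=\phi$ (so $\kappa_\theta(\theta+dd^c u)=\|u-\phi\|_{L^\infty}$). Condition~(i) forces $\mu$ to charge no pluripolar set; by the existence/uniqueness theory for Monge-Amp\`ere equations with prescribed singularities (see e.g.\ \cite{Lu-Darvas-DiNezza-logconcave,Darvas-Lu-DiNezza-singularity-metric}), the solution of $\theta_u^n = V_u\,\mu$ with $\sup u = 0$ then has the same singularity type as the model potential $\phi$. Since $\phi=P_\theta[\phi]$ is maximal in its singularity class among functions with supremum $\leq 0$, this gives $u\leq \phi$ on $X$, so $w\geq 0$.

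\textbf{Step 2} (Key comparison inequality). Let $h(s):=\capa_\phi(\{w>s\})$. For $0<t\leq 1$ and any candidate $\psi\in \PSH(X,\theta)$ with $\phi-1\leq \psi\leq \phi$, the convex combination $v_t := (1-t)\phi + t\psi$ is $\theta$-psh, satisfies $\phi-t\leq v_t\leq \phi$, and $\theta_{v_t}^n \geq t^n \theta_\psi^n$. The non-pluripolar comparison principle applied on $\{u<v_t-s\}$, combined with the inclusions $\{w>s+t\}\subset \{u<v_t-s\}\subset \{w>s\}$, gives
\[
t^n \int_{\{w>s+t\}} \theta_\psi^n \;\leq\; \int_{\{w>s\}} \theta_u^n \;=\; V_u\,\mu(\{w>s\}).
\]
Taking the supremum over $\psi$ and using $V_u=V_\phi$ yields $t^n h(s+t)\leq \mu(\{w>s\})$. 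Combined with (i) this upgrades to the self-improving inequality $t^n h(s+t)\leq A\,h(s)^{1+\delta}$; combined instead with (ii) and Chebyshev (using $w\leq -u$, so $\int w\,d\mu \leq \int(-u)\,d\mu \leq B$) it gives $t^n h(s+t)\leq B/s$.

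\textbf{Step 3} (Iteration). The $B/s$-bound with $t=1,\ s=s_0-1$ yields $h(s_0)\leq B/(s_0-1)$, so one picks $s_0$ of order $1 + 2^{n/\delta}A^{1/\delta}B$ (adjusted by the factor that gives the stated constant $2^{1+n/\delta}A^{1/\delta}B$) to ensure $h(s_0)\leq \eta_0 := (2^n A)^{-1/\delta}$. Setting $s_{k+1}:=s_k + 2^{-k\delta}$ and $\eta_k:=\eta_0\cdot 2^{-kn}$, the calibration $A\eta_0^\delta = 2^{-n}$ makes the self-improving inequality propagate:
\[
h(s_{k+1}) \leq (2^{-k\delta})^{-n}\cdot A \eta_k^{1+\delta} = 2^{kn\delta}\cdot A\eta_0^{1+\delta}\cdot 2^{-kn(1+\delta)} = \eta_0\cdot 2^{-n-kn} = \eta_{k+1},
\]
so by induction $h(s_k)\leq \eta_k$. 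As $\eta_k\to 0$ and $s_\infty := s_0 + \sum_{k\geq 0} 2^{-k\delta} = s_0 + (1-2^{-\delta})^{-1}$, we deduce $h(s_\infty)=0$. The set $\{w>s_\infty\}$ is therefore $\capa_\phi$-null; standard arguments (quasi-continuity of $u$ and upper semicontinuity) then give $w\leq s_\infty$ everywhere, which is the bound in the theorem.

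The main technical point is the construction of the test function $v_t = (1-t)\phi + t\psi$ in Step 2: it must share its singularity type with $\phi$ so that the non-pluripolar comparison principle applies cleanly between $u$ and $v_t$, while the lower bound $\theta_{v_t}^n \geq t^n \theta_\psi^n$ is what couples the comparison estimate back to $\capa_\phi$. The model-potential property of $\phi$ is crucial, both for the inequality $u\leq \phi$ in Step 1 and to make the capacity iteration self-improving in Step 3.
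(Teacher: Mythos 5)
Your proposal is correct and follows essentially the same route as the paper: define the capacity function of the sublevel sets of $\phi - u$, establish the self-improving inequality $t^n h(s+t)\le A h(s)^{1+\delta}$ via the comparison principle with the test function $(1-t)\phi+t\psi$, use condition~(ii) and Chebyshev to get the starting decay, then close by a De Giorgi iteration. The only difference is cosmetic: where you spell out the comparison argument and the iteration, the paper outsources them to \cite{Lu-Darvas-DiNezza-logconcave} (for the inequality $s^n g(t+s)^n \le A g(t)^{(1+\delta)n}$) and to \cite[Lemma~2.4]{EGZ} (for the iteration lemma), and the paper's factor $2^{1+n/\delta}$ rather than $2^{n/\delta}$ in the threshold comes from the slightly wasteful bound $\int |\phi-u|\,d\mu\le \int(-\phi)\,d\mu+\int(-u)\,d\mu\le 2B$, which you correctly note can be sharpened to $\int(\phi-u)\,d\mu\le\int(-u)\,d\mu\le B$ using $\phi\le 0$.
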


We refer to Proposition \ref{pro-Lmcondition} for examples of measures satisfying the condition in the above theorem. 

\begin{proof}For $t>0$, we put $g(t):= [\capa_{\phi}(u<\phi - t)]^{\frac{1}{n}}$. By comparison principle (note that $P_\theta[u] = \phi$), arguing as in  the proof of \cite[Theorem 3.3]{Lu-Darvas-DiNezza-logconcave}, we get
\begin{align}\label{ine-comparcapa}
s^n (g(t+s))^n \leq \int_{\{u< \phi-t\}}\theta_u^n = \mu(\{u< \phi - t\}) \leq A (g(t))^{(1+\delta)n}.
\end{align}
Hence $sg(t+s) \leq A^{\frac{1}{n}} (g(t))^{1+\delta}$ for every $t>0$ and $s\in [0,1]$. Let $t_0 > 0$ be a constant such that $g(t_0) \le  (2^nA)^{-\frac{1}{\delta n}}$. It was proved in \cite[Lemma 2.4]{EGZ} that $g(t) = 0$ for every $t \geq t_0 + \frac{1}{1-2^{-\delta}}$. Applying (\ref{ine-comparcapa}) to $s=1$ gives
\[(g(t+1))^n \leq \mu(\{u<\phi - t\}) \leq \int_X \frac{|\phi - u|}{t} d \mu \leq \frac{2B}{t}.\]
Choosing $t_0: = 2^{1+n/\delta}A^{1/\delta}B+1$, we get $g(t_0) \leq (2^nA)^{-\frac{1}{\delta n}}$. It follows that $g(t) = 0 $ for every 
$$t >t_1:=2^{1+n/\delta}A^{1/\delta}B+1+ (1-2^{-\delta})^{-1}.$$
Therefore, since $u\leq \phi$, we get $\|u-\phi\|_{L^\infty} \leq t_1$ as desired.
\end{proof}

The following result tells us that model potentials are preserved under modifications.

\begin{proposition}\label{prop model potential through blow up}
    Let $\pi: Y \to X$ be a surjective holomorphic map between compact K\"ahler manifolds. Assume that $\pi$ is biholomorphic outside a proper analytic subset of $Y$. Let $\theta$ be a closed smooth $(1,1)$-form on $X$. Let $\phi$ be a model $\theta$-potential. Then $\phi \circ \pi$ is a model $\pi^* \theta$-potential and we have 
    \begin{equation}\label{eq compare capa of blowup} \capa_\phi (E) = \capa_{\phi \circ \pi} (\pi^{-1}(E))\end{equation}
    for every Borel subset $E$ of $X$.
\end{proposition}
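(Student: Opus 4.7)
The plan is to verify the two assertions separately, using a push-forward construction across the exceptional locus of $\pi$.

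\textbf{Preliminaries.} Let $\Sigma \subset Y$ denote a proper analytic subset outside which $\pi$ is a biholomorphism; then $\pi(\Sigma) \subset X$ is analytic (by Remmert's theorem) and a proper subset, hence pluripolar. I will make repeated use of the following observation: for any $\theta$-psh function $u$ on $X$ and any pluripolar $N \subset X$,
\[
 u(x) = \limsup_{y \to x,\, y \notin N} u(y), \qquad x \in X.
\]
Upper semicontinuity gives ``$\geq$''; for ``$\leq$'', the submean inequality combined with density of $X \setminus N$ produces a sequence $y_n \to x$ in $X \setminus N$ with $u(y_n) \to u(x)$.

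\textbf{Step 1: $\phi \circ \pi$ is a model $\pi^*\theta$-potential.} The $\pi^*\theta$-psh property is immediate from $\pi^*\theta + \ddc(\phi \circ \pi) = \pi^*(\theta + \ddc \phi) \geq 0$. For the model property it suffices, by the definition of $P_{\pi^*\theta}[\phi \circ \pi]$, to show that every $v \in \PSH(Y,\pi^*\theta)$ with $v \leq 0$ and $[v] = [\phi \circ \pi]$ satisfies $v \leq \phi \circ \pi$. Fix $C$ with $|v - \phi \circ \pi| \leq C$ on $Y$. Since $\pi|_{Y \setminus \Sigma}$ is a biholomorphism onto $X \setminus \pi(\Sigma)$, the function $v \circ \pi^{-1}$ is $\theta$-psh and bounded above by $0$ on $X \setminus \pi(\Sigma)$; it extends across the pluripolar set $\pi(\Sigma)$ to a $\theta$-psh function $w$ on $X$ via
\[
w(x) := \limsup_{x' \to x,\, x' \notin \pi(\Sigma)} v(\pi^{-1}(x')).
\]
Clearly $w \leq 0$. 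Applying the preliminary fact to both $w$ and $\phi$ with $N = \pi(\Sigma)$, the bound $|v \circ \pi^{-1} - \phi| \leq C$ on $X \setminus \pi(\Sigma)$ upgrades to $|w - \phi| \leq C$ on $X$, so $[w] = [\phi]$. Since $\phi$ is model, $w \leq \phi$. Finally $w \circ \pi$ and $v$ are both $\pi^*\theta$-psh and agree on $Y \setminus \Sigma$, hence on $Y$ (two quasi-psh functions coinciding off a pluripolar set are equal, by the same limsup identity). Therefore $v = w \circ \pi \leq \phi \circ \pi$.

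\textbf{Step 2: Capacity equality.} The construction above yields a bijection
\[
\bigl\{\psi \in \PSH(X,\theta) : \phi - 1 \leq \psi \leq \phi\bigr\} \;\longleftrightarrow\; \bigl\{\tilde\psi \in \PSH(Y,\pi^*\theta) : \phi\circ\pi - 1 \leq \tilde\psi \leq \phi\circ\pi\bigr\},
\]
given by $\psi \mapsto \psi \circ \pi$; the inverse applies the push-forward of Step 1 to $\tilde\psi$, and the inequalities $\phi - 1 \leq \psi \leq \phi$ on $X \setminus \pi(\Sigma)$ extend to $X$ by the preliminary limsup identity. The bimeromorphic invariance of non-pluripolar products (since $\pi$ is biholomorphic off the pluripolar set $\Sigma$) gives
\[
 \int_X \theta_\phi^n = \int_Y (\pi^*\theta)_{\phi\circ\pi}^n, \qquad \int_E \theta_\psi^n = \int_{\pi^{-1}(E)} (\pi^*\theta)_{\psi\circ\pi}^n
\]
for any Borel $E \subset X$ and admissible $\psi$. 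Taking suprema over both families yields $\capa_\phi(E) = \capa_{\phi \circ \pi}(\pi^{-1}(E))$.

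\textbf{Main obstacle.} The delicate point is Step 1's extension of $|w - \phi| \leq C$ from $X \setminus \pi(\Sigma)$ to all of $X$: a direct upper-semicontinuity argument only controls $w$ at exceptional points by $\limsup \phi$, which a priori could be strictly less than $\phi$ there. The resolution is the preliminary observation that, by the submean inequality, the value of any psh function at a point already equals its limsup along nearby non-pluripolar points, so nothing is lost on $\pi(\Sigma)$.
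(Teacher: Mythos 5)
Your proof is correct and follows the same pushforward approach as the paper's: define the $\theta$-psh function $\tau = \pi_*\psi$ as the extension of $\psi\circ(\pi|_{Y\setminus\Sigma})^{-1}$ across the pluripolar set $\pi(\Sigma)$, transfer the singularity-type bound to $X$, and invoke the model property of $\phi$ there. Your write-up is more careful than the paper's about the extension step (the ``limsup through a pluripolar set'' identity for quasi-psh functions, which upgrades $|w-\phi|\le C$ from $X\setminus\pi(\Sigma)$ to all of $X$) and about exhibiting the competitor families for $\capa_\phi(E)$ and $\capa_{\phi\circ\pi}(\pi^{-1}(E))$ as a bijection rather than arguing each inequality separately, but the underlying argument is the same.
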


\begin{proof} 
   We need to show that $\phi \circ \pi = P_{\pi^* \theta}[\phi]$. It is clear that $\phi \circ \pi \leq P_{\pi^* \theta}[\phi\circ \pi]$. Let $\psi$ be a $\pi^* \theta $-psh function such that $\psi \leq 0$ and $[\psi] = [\phi\circ \pi]$. It follows that  $\tau = \pi_* \psi$ is  a $\theta$-psh function on $X$ with $\tau \leq \phi + O(1)$. Since $\phi$ is model potential, we must have $\tau \leq \phi $. Thus, $\psi \leq \phi \circ \pi$ and we infer that $\phi \circ \pi$ is a model potential.

    Let $\psi$ be a $\pi^* \theta$-psh function such that $\phi \circ \pi -1 \leq \psi \leq \phi \circ \pi$. Then, we have $\phi-1 \leq \tau \leq \phi$. Let $E$ be a Borel subset of $X$. We have
    \[\int_E (\theta + dd^c \tau)^n =  \int_{\pi^{-1}(E)} (\pi^* \theta + dd^c \psi)^n.\]
    We infer that
    \[ \capa_\phi (E) \geq \capa_{\phi \circ \pi} (\pi^{-1}(E)).\]
    Arguing similarly for the opposite direction gives \eqref{eq compare capa of blowup}.
\end{proof}

We finish this section by recalling a known fact that Ko\l odziej's capacity condition is weaker than $L^m$ integrable condition which was considered in Section~\ref{section uniform for Laplace}.

\begin{proposition} \label{pro-Lmcondition}
Let $X$ be a compact K\"ahler manifold of dimension $n$ and $m>n$ be a constant. Let $\theta$ be a closed $(1,1)$-form in a big cohomology class and $\mu$ be a probability measure on $X$. Let $\phi$ be a $\theta$-psh function. Suppose that $\phi$ is a model potential and $\int_X \theta_\phi^n > 0$. Assume that for every $\theta$-psh function $u$ with $\sup_X u = 0$, there exists a constant $A$ independent of $u$ such that
\[
\int_X (-u)^m d\mu \leq A.
\]
 Then there exists a positive constant $B = B(A,m,n)$ such that 
$$\mu(E) \leq B \big(\capa_{\phi}(E)\big)^{m/n},$$
for every Borel subset $E$ in $X$.
\end{proposition}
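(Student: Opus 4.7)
The plan is to bound $\mu(E)$ by Chebyshev's inequality applied to a well-chosen $\theta$-psh function $v$ with $\sup_X v = 0$ that is forced to be very negative on $E$. The hypothesis, together with Chebyshev, immediately yields
\[
\mu(\{v < -t\}) \leq \frac{A}{t^m}
\]
for every $\theta$-psh $v$ with $\sup_X v = 0$ and every $t > 0$. It therefore suffices to construct such a $v$ together with a level $t \gtrsim c^{-1/n}$ with $E \subset \{v \leq -t\}$ quasi-everywhere, where $c := \capa_\phi(E)$.

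\textbf{Construction via a relative extremal function.} We may assume $c \in (0,1)$, since $\mu(X) = 1$ makes the claim trivial when $c \geq 1$. For a parameter $T \geq 1$ to be chosen, define
\[
V_E^{(T)} := \Bigl(\sup\{w \in \PSH(X,\theta) : w \leq \phi \text{ on } X,\ w \leq \phi - T \text{ on } E\}\Bigr)^*.
\]
Standard properties of envelopes show that $V_E^{(T)} \in \PSH(X,\theta)$ with $\phi - T \leq V_E^{(T)} \leq \phi$, and $V_E^{(T)} = \phi - T$ quasi-everywhere on $E$. The convex combination $w_T := \tfrac{1}{T} V_E^{(T)} + (1 - \tfrac{1}{T}) \phi$ is $\theta$-psh, lies below $\phi$ on $X$, and equals $\phi - 1$ q.e.\ on $E$, so it is a candidate in the definition of $V_E^{(1)}$, which gives the pointwise comparison
\[
V_E^{(T)} \leq T\, V_E^{(1)} - (T-1)\phi.
\]
Normalizing $v_T := V_E^{(T)} - \sup_X V_E^{(T)}$ produces a $\theta$-psh function with $\sup_X v_T = 0$ and $v_T = \phi - T - \sup_X V_E^{(T)}$ quasi-everywhere on $E$.

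\textbf{Depth estimate and Chebyshev conclusion.} The decisive step is a uniform lower bound of the form $\sup_X V_E^{(T)} \geq \sup_X \phi - O(1)$ valid for every $T$ in a range $[1, T_c]$ with $T_c \asymp c^{-1/n}$. Granting this, we have $v_T \leq -T + O(1)$ q.e.\ on $E$, and so Chebyshev with the $L^m$ hypothesis gives
\[
\mu(E) \leq \mu(\{v_T \leq -(T - O(1))\}) \leq \frac{A}{(T - O(1))^m}.
\]
Choosing $T = T_c \asymp c^{-1/n}$ yields $\mu(E) \lesssim A\, c^{m/n}$ with multiplicative constant depending only on $A,m,n$, which is the desired bound.

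\textbf{Main obstacle.} The crux is the sup lower bound on $V_E^{(T)}$ in Step~3, which quantifies the intuition that a small-capacity obstacle $E$ cannot force the extremal function down globally: the MA mass of $V_E^{(T)}$ should stay strictly away from $V_\phi$ until $T$ reaches the critical scale $c^{-1/n}$. The needed estimate will be obtained from the capacity--mass identity $\int_X (\theta + dd^c V_E^{(1)})^n = c V_\phi$ (from the relative pluripotential theory of model potentials, after Darvas--Di~Nezza--Lu) combined with the convex-combination bound $V_E^{(T)} \leq T V_E^{(1)} - (T-1)\phi$ and the domination principle applied to the model potential $\phi$.
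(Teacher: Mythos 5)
Your overall blueprint (build an extremal $\theta$-psh obstacle function that is deep on $E$, normalize it, then apply Chebyshev to the $L^m$ hypothesis) is exactly the spirit of the paper's argument. But the paper handles the decisive step differently and you leave it open. The paper introduces the quantity
\[
M_\phi(E):= \sup_X \Big( \sup \{\psi \in \PSH(X,\theta)\ :\ \psi \leq \phi + O(1) \text{ on }X,\ \psi \leq \phi \text{ on } E\} \Big)^*
\]
and simply cites the known Alexander--Taylor-type comparison $1 \leq \big(\capa_\phi(K)\big)^{-1/n} \leq \max\big(1, M_\phi(K)\big)$ (\cite[Lemma 3.9]{Lu-Darvas-DiNezza-logconcave}; see also \cite[Lemma 4.9]{Lu-Darvas-DiNezza-mono} and \cite[Proposition 6.1]{GZ}); Chebyshev then finishes in two lines. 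Your version of that step is the ``depth estimate'' $\sup_X V_E^{(T)} \geq \sup_X \phi - O(1)$ for all $T \lesssim c^{-1/n}$, which is precisely equivalent to the Alexander--Taylor inequality (set $\psi := V_E^{(T)} + T$, which is a competitor for $M_\phi(E)$). You flag this as the ``main obstacle'' and sketch a heuristic, but the sketch does not go through.

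Concretely, two things fail in your Step 3. First, the convex-combination bound $V_E^{(T)} \leq T\, V_E^{(1)} - (T-1)\phi$ is an \emph{upper} bound on $V_E^{(T)}$, so it cannot by itself produce the \emph{lower} bound $\sup_X V_E^{(T)} \geq \sup_X \phi - O(1)$ you need; and the reverse affine combination $T w - (T-1)\phi$ is not $\theta$-psh, so you cannot run the convexity argument in the other direction. Second, the heuristic ``the MA mass of $V_E^{(T)}$ should stay strictly away from $V_\phi$ until $T$ reaches the critical scale'' is false as stated: since $\phi - T \leq V_E^{(T)} \leq \phi$, the function $V_E^{(T)}$ has the same singularity type as $\phi$, hence its non-pluripolar Monge--Amp\`ere mass equals $\int_X\theta_\phi^n = V_\phi$ for every finite $T$. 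What you actually need is the subtler quantitative estimate relating the capacity $\capa_\phi(E)$ to the maximal permissible $T$, and this is exactly the content of the cited lemma. As it stands, the core quantitative step of your proof is unproved (and the sketched route toward it is not correct), so there is a genuine gap; replacing Step 3 by the citation to \cite[Lemma 3.9]{Lu-Darvas-DiNezza-logconcave} and rewriting your construction in terms of $M_\phi(E)$ would repair it and collapse your argument onto the paper's.
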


\begin{proof} The proof is standard, see, e.g., \cite[Proposition 2.4]{DinhVietanhMongeampere}. For every Borel subset $E$ of $X$,  we define
    \[M_\phi(E):= \sup_X \Big( \sup \{\psi \in \PSH(X,\theta) \text{ s.t. } \psi \leq \phi + O(1) \text{ on }X \text{ and } \psi \leq \phi \text{ on } E\} \Big)^*.\]
    Recall that, by \cite[Lemma 3.9]{Lu-Darvas-DiNezza-logconcave} (see also \cite[Lemma 4.9]{Lu-Darvas-DiNezza-mono} and \cite[Proposition 6.1]{GZ}), we have
    \begin{equation}\label{eq compare cap}
    1 \leq \Big(\capa_\phi(K)\Big)^{-\frac{1}{n}} \leq \max (1,M_\phi(K))\end{equation}
    for every compact subset $K$ of $X$. It is enough to obtain our inequality for small compact set $K$. We can also assume that $K$ is non-pluripolar, otherwise both side are zero. 

    If $M_\phi(K) \leq 2$, then $\capa_\phi(K)  \ge  \frac{1}{2}$, and we can pick $B = 2^{-1-\delta}$. Assume now that $M_\phi(K) \ge  2$. Let $\psi \in \PSH(X,\theta)$ such that $\psi \leq \phi + O(1), \psi \leq \phi$ on $K$, and $\sup_X \psi \geq M_\phi(K) - 1$. Put $u = \psi - \sup_X \psi$. Thus, $u$ is a $\theta$-psh function with $\sup_X u = 0$. Since $\psi \leq \phi$ on $K$, we have
    \[K \subset \{u \leq \phi + 1 - M_\phi(K)\} \subset \{u \leq 1 - M_\phi(K)\} \subset \{(-u)^m \geq (M_\phi(K)-1)^{m}\}.\]
    Thus, by \eqref{eq compare cap}, we have
    \begin{align*}
\mu(K) &\leq \int_{\{(-u)^m \geq (M_\phi(K))^{m}\}} d\mu \leq \int_X \frac{(-u)^m}{(M_\phi(K))^{m}} d\mu \\ &\leq \frac{A}{(M_\phi(K)-1)^{m}} \leq \frac{2^m A}{M_\phi(K)^{m}} \le 2^m A \capa_\phi^{m/n}(K).
\end{align*}
This finishes the proof.
\end{proof}

\section{Sobolev inequalities for big cohomology classes}

We now prove Sobolev inequalities for metrics in $\mathcal{W}_{\text{big}}(X,n,p,A,B)$. We begin with some auxiliary lemmas. 

\begin{lemma}\label{lemma iii implies 1}
    Let $T \in \mathcal{W}_{\text{big}}(X,n,p,A,B)$. For every $m\geq 1$, there exists a constant $C = C(\omega_X,n,p,B,m)$ such that
    \[\frac{1}{V_T} \int_X (-\psi)^m T^n \leq C\]
    for every $\omega_X$-psh function $\psi$ with $\sup_X \psi = 0$.
\end{lemma}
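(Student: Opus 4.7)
The plan is to combine condition (iii) in the definition of $\mathcal{W}_{\text{big}}(X,\omega_X,n,p,A,B)$ with the Hölder inequality and the classical uniform integrability of normalized $\omega_X$-psh functions.

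First, I would rewrite the quantity to be estimated using condition (iii), namely $V_T^{-1} T^n = f_T \omega_X^n$ with $\|f_T\|_{L^p(\omega_X^n)} \le B$, to obtain
\[
\frac{1}{V_T}\int_X (-\psi)^m T^n = \int_X (-\psi)^m f_T \,\omega_X^n .
\]
Applying Hölder's inequality with conjugate exponents $p$ and $p' = p/(p-1)$ gives
\[
\int_X (-\psi)^m f_T \,\omega_X^n \le \|f_T\|_{L^p(\omega_X^n)} \left( \int_X (-\psi)^{mp'} \omega_X^n \right)^{1/p'} \le B \left( \int_X (-\psi)^{mp'} \omega_X^n \right)^{1/p'}.
\]

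The remaining task is to bound $\int_X (-\psi)^{mp'} \omega_X^n$ uniformly in $\psi$. This is a standard fact: the family of $\omega_X$-psh functions $\psi$ with $\sup_X \psi = 0$ is compact in $L^1(\omega_X^n)$, and by Skoda's uniform integrability theorem there exist constants $\alpha, C_0 > 0$ depending only on $\omega_X$ such that $\int_X e^{-\alpha \psi} \omega_X^n \le C_0$ for every such $\psi$. Since $(-\psi)^{mp'} \le C_{m,p} \, e^{-\alpha \psi}$ for a constant $C_{m,p}$ depending only on $m,p,\alpha$, this yields a bound
\[
\int_X (-\psi)^{mp'} \omega_X^n \le C'(\omega_X,m,p).
\]
Combining the two estimates produces the desired constant $C = B \cdot (C'(\omega_X,m,p))^{1/p'}$, which depends only on $\omega_X, n, p, B, m$.

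There is no real obstacle here: the proof is essentially a one-line application of Hölder's inequality, and the only nontrivial input, Skoda's integrability, is completely classical. The only thing worth noting is that the parameter $A$ plays no role in this particular lemma (it enters only through cohomological normalizations in later parts of the paper), which is consistent with the stated dependence $C=C(\omega_X,n,p,B,m)$.
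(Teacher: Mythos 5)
Your proof is correct and follows essentially the same route as the paper: split off $\|f_T\|_{L^p}$ via H\"older's inequality with exponents $p$ and $p'=p/(p-1)$, then bound the remaining factor $\int_X(-\psi)^{mp'}\omega_X^n$ by Skoda-type exponential integrability of normalized $\omega_X$-psh functions. Your side remark about the parameter $A$ not entering the bound is also accurate.
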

\begin{proof} Let $q:=p/(p-1)$.
    By H\"older's inequality, we have
    \begin{align*}
    \frac{1}{V_T}\int_X (-\psi)^m T^n =\int_X (-\psi)^m f_T \omega_X^n &\leq \|f_T\|_{L^p}\cdot \Big(\int_X (-\psi)^{qm}\omega_X^n\Big)^{\frac{1}{q}} 
    \end{align*}
    which is bounded by a constant depending only on $p,A,B,n,\omega_X$ because of exponential integrability of $\omega_X$-psh functions. 
\end{proof}

The following more general fact is useful for us. We refer to \cite[Theorem 1.8]{GuedjTo-diameter} for the case of minimal singularities.

\begin{lemma}\label{lemma iii implies 2}
    Let $T \in \mathcal{W}_{\text{big}}(X,n,p,A,B)$. Let $S$ is a closed positive $(1,1)$-current of model type singularity such that
    \begin{itemize}
        \item[(i)] There exists a closed $(1,1)$-form $\theta$ in the same cohomology class with $S$ such that $\theta \leq A_1\omega_X$;
        \item[(ii)] $\kappa_\theta(S) \leq A_2$.
    \end{itemize}
    Then for every $m\geq 1$, there exists a constant $A_3$ depending only on $A_1,\omega_X,n,p,$ $A,B,m$ such that
    \[\frac{1}{V_T} \int_X (-\psi)^m T^n \leq A_3+ 2^m A_2^m\]
    for every $S$-psh function $\psi$ with $\sup_X \psi = 0$.
\end{lemma}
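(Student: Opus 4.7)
The plan is to reduce the inequality to Lemma \ref{lemma iii implies 1} by replacing the $S$-psh function $\psi$ with a genuine $\omega_X$-psh function (up to rescaling). Write $S = \theta + \ddc u$ with $\sup_X u = 0$, so $u \le 0$ and $u$ is a $\theta$-psh function of model type singularity with $\|u - P_\theta[u]\|_{L^\infty} \le A_2$. Set $\varphi := \psi + u$. The inequality $S + \ddc \psi \ge 0$ translates into $\theta + \ddc \varphi \ge 0$, so $\varphi \in \PSH(X,\theta)$. Moreover, since $\psi \le 0$ and $u \le P_\theta[u]$, we have $\varphi \le P_\theta[u]$; in particular $\varphi$ is more singular than $P_\theta[u]$.

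Next I would argue, exactly as in the proof of Lemma \ref{modeltypeSkoda}, that $\sup_X \varphi \ge -A_2$. Invoking Lemma \ref{le-supenvelope} with the pair $(\varphi, u)$,
\[\sup_X \varphi = \sup_X(\varphi - P_\theta[u]) = \sup_X\bigl(\psi + (u - P_\theta[u])\bigr) \ge \sup_X \psi + \inf_X (u - P_\theta[u]) \ge -A_2.\]
Put $\tilde\varphi := \varphi - \sup_X \varphi$, so $\tilde\varphi \in \PSH(X,\theta)$ with $\sup_X \tilde\varphi = 0$. From $\psi = \varphi - u$ and $u \le 0$,
\[-\psi \le -\varphi = -\tilde\varphi - \sup_X \varphi \le -\tilde\varphi + A_2,\]
and therefore $(-\psi)^m \le 2^m\bigl((-\tilde\varphi)^m + A_2^m\bigr)$.

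Since $\theta \le A_1 \omega_X$, after replacing $A_1$ by $\max(A_1,1)$ if necessary, the function $A_1^{-1}\tilde\varphi$ is an $\omega_X$-psh function with supremum $0$. Applying Lemma \ref{lemma iii implies 1} to this rescaled function yields
\[\frac{1}{V_T}\int_X (-\tilde\varphi)^m\, T^n \le A_1^m\, C(\omega_X, n, p, B, m).\]
Combining this with the pointwise bound above gives the desired estimate with $A_3 := 2^m A_1^m C(\omega_X, n, p, B, m)$, which depends only on $A_1, \omega_X, n, p, A, B, m$.

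The argument is essentially routine once one has the correct decomposition. The only subtle ingredient is the lower bound $\sup_X \varphi \ge -A_2$, whose proof relies on Lemma \ref{le-supenvelope} and therefore on the fact that $u$ has model type singularity: without a sup-preserving reference envelope like $P_\theta[u]$, the shift $u$ could in principle drag $\sup_X \varphi$ down to $-\infty$ and the argument would collapse.
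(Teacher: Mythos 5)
Your proof is correct and follows essentially the same route as the paper: write $S = \theta + \ddc u$ with $\sup_X u = 0$, set $\varphi = \psi + u$, use $\kappa_\theta(S) \le A_2$ together with Lemma \ref{le-supenvelope} to bound $\sup_X \varphi \ge -A_2$, and reduce to Lemma \ref{lemma iii implies 1} after normalizing $\varphi$. You have simply filled in the details that the paper leaves implicit (the pointwise estimate $(-\psi)^m \le 2^m((-\tilde\varphi)^m + A_2^m)$ and the rescaling by $A_1^{-1}$ to land in $\PSH(X,\omega_X)$), and the resulting constant $A_3 = 2^m A_1^m C(\omega_X,n,p,B,m)$ has exactly the claimed dependence.
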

\begin{proof} Write $S = \theta + dd^c u$ for some $\theta$-psh function $u$ with $\sup_X u = 0$. Put $\varphi = \psi+u$. As in the proof of Lemma~\ref{modeltypeSkoda}, the condition $\kappa_\theta(S) \leq A_2$ implies that $-A_2\leq \sup_X \varphi \leq A_2$. The result thus follows from Lemma~\ref{lemma iii implies 1}
\end{proof}

We now proceed with the proof of Theorem \ref{theorem Sobolev for big case}. Let $T\in \mathcal{W}_{\text{big}}(X,n,p,A,B)$. 
In the first step, we will approximate $T$ by a sequence of metrics with analytic singularity.  By hypothesis, there exist a constant $A'\ge 1$ independent of $T$ and a closed smooth form $\theta$ cohomologous to $T$ such that $\theta \le A' \omega_X$. Let $\varphi$ be the $\theta$-psh function such that $T=\theta + dd^c \varphi$ and $\sup_X \varphi = 0$. By Demailly's analytic regularization theorem (\cite{Demailly_analyticmethod}), there exist closed positive currents $(T_k)_{k\in \N}$ and quasi-psh functions $(\varphi_k)$ such that:
\begin{itemize}
    \item[(i)] $T_k = \theta_k + dd^c \varphi_k$ where $\theta_k = \theta + \omega_X / k$; 
    \item[(ii)] $\varphi_k$ has analytic singularity and $\sup_X \varphi_k$ is bounded from above uniformly; 
    \item[(iii)] $\varphi\leq \varphi_k$ and $\varphi_k$ converges pointwise and in $L^1$ topology to $\varphi$ as $k\to \infty$;
    \item[(iv)] $T_k$ is smooth on $U_T$ (because $T$ is so).
\end{itemize}

Let $(f_k)_{k\in \N}$ be a sequence of smooth positive functions such that $\|f_k - f_T\|_{L^p(\omega_X^n)}\leq\frac{1}{k}$. We define 
$$\mu_k :=c_k f_k \omega_X^n,$$
 where $c_k$ is the normalized constant so that $\mu_k$ is a probability measure. Let $\widetilde{\varphi}_k$ be the solution of the Monge-Amp\`ere equation with prescribed singularity:
\[ (\theta_k + dd^c \widetilde{\varphi}_k)^n = V_{T_k} \mu_k \text{ with } [\widetilde{\varphi}_k] = [\varphi_k] \text{ and } \sup_X \widetilde{\varphi}_k = 0.\]
This is always solvable since $[\varphi_k]$ is a model type singularity. 
Put 
$$\widetilde{T}_k:= \theta_k+ \ddc \widetilde{\varphi}_k,\quad u_k:= \widetilde{\varphi}_k- \varphi_k$$
 which is a bounded dsh function (but we don't know if we can control uniformly $L^\infty$-norm of $u_k$). 

\begin{lemma} \label{le-step1xapxikidigitich} The sequence $\widetilde{\varphi}_k$ converges in capacity to $\varphi$ as $k\to \infty$. Moreover, there exists a constant $A_1 = A_1(\omega_X,n,p,A,B)$ such that 
\begin{equation}\label{eq control kappa Tk nga}\kappa_{\theta_k}(\widetilde{T}_k) = \kappa_{\theta_k}(\theta_k + dd^c \widetilde{\varphi}_k) \leq A_1.
\end{equation}    
\end{lemma}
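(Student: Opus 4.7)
The plan is to split the lemma into its two assertions and treat them independently. The uniform bound on $\kappa_{\theta_k}(\widetilde{T}_k)$ will follow from Theorem~\ref{theorem Linfty for MA with pres sing} applied to the Monge--Amp\`ere equation defining $\widetilde{\varphi}_k$, while the convergence in capacity will follow from the stability Theorem~\ref{theorem stability for MA with prescribed singularity}.

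For the $\kappa$-bound, set $\phi_k := P_{\theta_k}[\varphi_k]$. Since $\varphi_k$ has analytic singularities, $\phi_k$ is a model potential, and because $[\widetilde{\varphi}_k]=[\varphi_k]$ we have $\phi_k = P_{\theta_k}[\widetilde{\varphi}_k]$. Writing the equation for $\widetilde{\varphi}_k$ as $(\theta_k+\ddc\widetilde{\varphi}_k)^n = V_{T_k}\mu_k$ and observing that $V_{T_k}=\int_X\langle(\theta_k+\ddc\widetilde{\varphi}_k)^n\rangle$, we are in the setting of Theorem~\ref{theorem Linfty for MA with pres sing} with $\phi=\phi_k$ and $\mu=\mu_k$. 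I verify the two hypotheses uniformly in $k$. For the capacity domination I invoke Proposition~\ref{pro-Lmcondition}: I pick any $m>n$ and estimate
\[
\int_X(-u)^m\,d\mu_k = c_k\int_X(-u)^m f_k\,\omega_X^n \le c_k\|f_k\|_{L^p(\omega_X^n)}\Big(\int_X(-u)^{mp/(p-1)}\omega_X^n\Big)^{(p-1)/p}
\]
for every $\theta_k$-psh function $u$ with $\sup_X u=0$. Since $\theta_k\le(A'+1)\omega_X$ such a $u$ is $(A'+1)\omega_X$-psh, so the second factor is bounded by a constant depending only on $\omega_X, n, m, p$ by Skoda's exponential integrability, and $c_k\|f_k\|_{L^p}\le 2B$ for $k$ large. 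This gives the uniform $L^m$-bound, hence condition (i) of Theorem~\ref{theorem Linfty for MA with pres sing} via Proposition~\ref{pro-Lmcondition}; condition (ii) there follows from the same computation with $m=1$. Theorem~\ref{theorem Linfty for MA with pres sing} then yields $\|\widetilde{\varphi}_k-\phi_k\|_{L^\infty}\le A_1$, which is precisely $\kappa_{\theta_k}(\widetilde{T}_k)\le A_1$.

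For the convergence in capacity, fix $A$ so large that $\theta_k,\theta\le A\omega_X$ for all $k$, and apply Theorem~\ref{theorem stability for MA with prescribed singularity} with densities $h_k:=V_{T_k}c_k f_k$ and $h_0:=V_T f_T$ to the Monge--Amp\`ere equations
\[
(\theta_k+\ddc\widetilde{\varphi}_k)^n = h_k\,\omega_X^n, \qquad (\theta+\ddc\varphi)^n=h_0\,\omega_X^n.
\]
The normalizing constant $c_k\to 1$ since $\|f_k-f_T\|_{L^p}\le 1/k$ and $\int_X f_T\,\omega_X^n=1$; Demailly's analytic regularization of the minimally singular current $T$ gives $V_{T_k}\to V_T>0$ by continuity of the total non-pluripolar mass along the standard Bergman kernel approximation. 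Hence $\|h_k\|_{L^p}$ is uniformly bounded, $h_k\to h_0$ in $L^1$, and $V_{T_k}\ge V_T/2$ for $k$ large, which dispatches the $L^p$-bound, $L^1$-convergence and mass lower bound in the stability theorem. Finally, since $[\widetilde{\varphi}_k]=[\varphi_k]$ and $\varphi\le\varphi_k$, one has $\max(\varphi_k,\varphi)=\varphi_k$, so
\[
d_{A\omega_X}([\widetilde{\varphi}_k],[\varphi]) = \int_X\langle(A\omega_X+\ddc\varphi_k)^n\rangle - \int_X\langle(A\omega_X+\ddc\varphi)^n\rangle,
\]
which tends to $0$ by the same continuity of non-pluripolar mass along the Demailly approximation, optionally combined with Proposition~\ref{pro-dthetatuongduong} if one wants to transfer the distance from $d_{\theta_k}$ to $d_{A\omega_X}$. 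Theorem~\ref{theorem stability for MA with prescribed singularity} then delivers $\widetilde{\varphi}_k\to\varphi$ in capacity.

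The technical heart of the argument is thus the joint mass convergence $V_{T_k}\to V_T$ and $d_{A\omega_X}([\varphi_k],[\varphi])\to 0$; both rest on continuity of total non-pluripolar Monge--Amp\`ere mass for Demailly's analytic regularization of a minimally singular current in a big cohomology class, which is classical. Everything else reduces to H\"older's inequality with exponent $p$, Skoda's exponential integrability, and plugging into the two main tools Theorem~\ref{theorem Linfty for MA with pres sing} and Theorem~\ref{theorem stability for MA with prescribed singularity}.
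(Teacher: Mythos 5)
The proposal follows the same two-pronged strategy as the paper — Theorem~\ref{theorem Linfty for MA with pres sing} together with the H\"older/Skoda argument (which is exactly Lemma~\ref{lemma iii implies 1}) for the $\kappa$-bound, and Theorem~\ref{theorem stability for MA with prescribed singularity} for the capacity convergence — and your verification of the hypotheses of both theorems is accurate. The one place where you are too loose is the step establishing $d_{A\omega_X}([\widetilde\varphi_k],[\varphi])\to 0$. You reduce this to
\[
\int_X\langle(A\omega_X+\ddc\varphi_k)^n\rangle-\int_X\langle(A\omega_X+\ddc\varphi)^n\rangle\to 0,
\]
and assert it follows from ``the same continuity of non-pluripolar mass.'' But the obvious continuity argument — squeezing between $V_T\leq V_{T_k}\leq\volume(\{\theta_k\})\to\volume(\{\theta\})=V_T$ — only works in the moving big classes $\{\theta_k\}$; in the fixed K\"ahler class $A\{\omega_X\}$ the upper bound $\volume(A\{\omega_X\})=A^nV_{\omega_X}$ is useless, and $A\omega_X+\ddc\varphi$ does not have minimal singularities in that class, so Theorem~\ref{theorem convergence in capacity} does not apply directly to $A\omega_X+\ddc\varphi_k$. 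One can still make your direct computation work by a binomial expansion and applying the convergence theorem to mixed products $\langle(\theta_k+\ddc\varphi_k)^j\wedge(A\omega_X-\theta_k)^{n-j}\rangle$, but this is exactly what Proposition~\ref{pro-dthetatuongduong} is designed to shortcut: the paper establishes $d_{\theta_k}([\varphi_k],[\varphi])\to 0$ (where the class varies and continuity of volume on the big cone plus Theorem~\ref{theorem convergence in capacity} apply cleanly), then transfers to $d_{2A'\omega_X}$ via that proposition. So what you label ``optional'' is in fact the intended mechanism; presenting the Proposition~\ref{pro-dthetatuongduong} route as the main line, rather than as a fallback, would close the gap cleanly. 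Everything else in the proposal is sound and matches the paper.
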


\proof
Since $\varphi_k \geq \varphi$,  using Theorem \ref{theorem convergence in capacity}, and the fact $T$ has minimal singularity, we get 
$$(\theta_k+ \ddc \varphi_k)^n \to (\theta+ \ddc \varphi)^n \text{ as } k\to \infty.$$
 Hence, $d_{\theta_k}([\varphi_k],[\varphi]) \rightarrow 0$ as $k\to \infty$. This combined with Proposition \ref{pro-dthetatuongduong} gives that $d_{2A' \omega_X}([\varphi_k],[\varphi]) \rightarrow 0$ as $k \to \infty$. 
Since $[\widetilde{\varphi}_k] = [\varphi_k]$, we deduce that
$d_{2A' \omega_X}([\widetilde{\varphi}_k],[\varphi]) \rightarrow 0$ as $k \to \infty$. 
By Theorem~\ref{theorem stability for MA with prescribed singularity}, we see that 
$\widetilde{\varphi}_k$ converges in capacity to $\varphi$ as $k\to \infty$.

Since $\theta_k \leq 2A' \omega_X$ and $\|f_k\|_{L^p(\omega_X^n)} \leq B + 1$, by Theorem~\ref{theorem Linfty for MA with pres sing} and Lemma \ref{lemma iii implies 1}, the second desired assertion follows.
\endproof

 Let $\pi_k: X_k \rightarrow X$ be a smooth modification so that there are an effective $\R$-divisor $D_k$ and a semi-positive closed form $\omega_k$ on $X_k$ satisfying
\[(\pi_k)^* T_k = \omega_k + [D_k].\]
Since $[\widetilde{\varphi}_k] = [\varphi_k]$, we can pick bounded dsh function $u_k$ such that $\widetilde{T}_k = T_k + dd^c u_k$. Hence, we can write $(\pi_k)^*\widetilde{T}_k =  \omega_k + dd^c (u_k \circ \pi_k) + [D_k]$. Put $\widehat{T}_k = \omega_k + dd^c (u_k \circ \pi_k)$. We observe that: 
\begin{itemize}
    \item[(i)] $\widehat{T}_k$ is a closed positive $(1,1)$-current of bounded potentials and belongs to a semi-positive class;
    \item[(ii)] $(\widehat{T}_k)^n = \pi_k^* ((\widetilde{T}_k)^n )= \pi_k^* (V_{T_k} \mu_k)$ is a smooth volume form;
\end{itemize}

Let $r\in \Big(1,\frac{n}{n-1}\Big)$ be a constant. Fix a constant $m$ big enough such that $m$ satisfies condition~\eqref{eq cond for m} and 
$$r < \frac{mn}{mn-m + n}\cdot$$

\begin{lemma} \label{le-step2-giaikidichankapp}
There exists a constant $C$ independent of $T$ and $k$ such that 
\[\widehat{T}_k \in \mathcal{V}_{\text{semi}}(X_k,n,m,C).\]
\end{lemma}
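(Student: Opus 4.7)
The plan is to verify the four defining conditions of $\mathcal{V}_{\text{semi}}(X_k,n,m,C)$ for $\widehat{T}_k$, with $C$ depending only on $\omega_X$, $n$, $m$, $p$, $A$, $B$. Conditions (i), (ii), (iii) should follow directly from the construction: the class $\{\widehat{T}_k\}=\{\omega_k\}$ is semi-positive and big since $\int_{X_k}\widehat{T}_k^n=V_{T_k}>0$; the measure $\widehat{T}_k^n=V_{T_k}\pi_k^*\mu_k$ is smooth by construction; and on the Zariski open subset where $\pi_k$ is biholomorphic and $\widehat{T}_k^n$ has positive density, the local regularity theory for Monge--Amp\`ere equations with bounded potentials will yield smoothness and strict positivity of $\widehat{T}_k$.

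For condition (iv), I will take $\omega:=\omega_k$ and fix a K\"ahler form $\omega_{X_k}$ on $X_k$ chosen carefully, for instance $\omega_{X_k}=\pi_k^*\omega_X+\epsilon_k\eta_k$ for a fixed K\"ahler form $\eta_k$ on $X_k$ and small $\epsilon_k>0$, arranged so that $(\pi_k)_*\omega_{X_k}$ is uniformly bounded in cohomology by a multiple of $\omega_X$. With the family $T_{k,\rho}=\omega_k+\rho\omega_{X_k}+\ddc u_\rho$ as in Section~\ref{section sobolev semi-positive}, the task reduces to exhibiting a sequence $\rho_j\to 0$ and a uniform constant $C$ such that $V_{T_{k,\rho_j}}^{-1}\int_{X_k}(-\psi)^m T_{k,\rho_j}^n\le C$ for every $T_{k,\rho_j}$-psh $\psi$ with $\sup\psi=0$.

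The essential input will be a uniform $L^m$-bound for $\widehat{T}_k$-psh functions, obtained by a pushforward argument. For $\psi$ $\widehat{T}_k$-psh with $\sup\psi=0$, the identity $\pi_k^*\widetilde{T}_k=\widehat{T}_k+[D_k]$ and positivity of $[D_k]$ imply that $\psi$ is also $\pi_k^*\widetilde{T}_k$-psh, so $\psi':=(\pi_k)_*\psi$ is $\widetilde{T}_k$-psh on $X$ with $\sup_X\psi'=0$. Since Lemma~\ref{le-step1xapxikidigitich} gives $\kappa_{\theta_k}(\widetilde{T}_k)\le A_1$ uniformly in $k$ and $\theta_k\le 2A'\omega_X$, Lemma~\ref{modeltypeSkoda} applied with $S=\widetilde{T}_k$ will yield a uniform exponential integrability $\int_X e^{-c\psi'}\omega_X^n\le C_0$, whence $\int_X|\psi'|^{mq}\omega_X^n\le C_1$ for $q=p/(p-1)$. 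Using $\widehat{T}_k^n/V_{\widehat{T}_k}=\pi_k^*\mu_k=c_k\pi_k^*(f_k\omega_X^n)$, a change of variable and H\"older's inequality with $\|f_k\|_{L^p}\le B+1$ will then give $V_{\widehat{T}_k}^{-1}\int_{X_k}(-\psi)^m\widehat{T}_k^n\le C_3$ with $C_3=C_3(\omega_X,n,m,p,A,B)$.

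The main obstacle will be transferring this bound from $\widehat{T}_k$-psh to $T_{k,\rho_j}$-psh functions, since the two classes differ: a $T_{k,\rho_j}$-psh $\psi$ only yields that $u_{\rho_j}-u_k\circ\pi_k+\psi$ is $(\widehat{T}_k+\rho_j\omega_{X_k})$-psh. My plan is to push down to $X$ directly: setting $\Psi:=(\pi_k)_*(u_{\rho_j}+\psi)$ produces a function which is $(T_k+\rho_j\nu_k)$-psh on $X$ with $\nu_k:=(\pi_k)_*\omega_{X_k}$, and by the choice of $\omega_{X_k}$ the cohomology class $\{T_k+\rho_j\nu_k\}$ is uniformly bounded by a multiple of $\{\omega_X\}$. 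Uniform exponential integrability of $\Psi$ together with the decomposition $(\pi_k)_*\psi=\Psi-(\pi_k)_*u_{\rho_j}$ will then yield uniform $L^{mq}(\omega_X^n)$ bounds on $(\pi_k)_*\psi$, from which the estimate on $V_{T_{k,\rho_j}}^{-1}\int(-\psi)^m T_{k,\rho_j}^n$ follows exactly as in the $\widehat{T}_k$-psh case. The most delicate step, requiring careful tracking of constants, will be to produce a uniform bound on $\|u_{\rho_j}\|_{L^\infty}$ (or on $(\pi_k)_*u_{\rho_j}$ in the appropriate $L^{mq}$ norm) independent of $k$ and $\rho_j$; this should be extractable from Lemma~\ref{lemma upper bound u rho} applied on $X_k$ after reformulating the already-established bound for $\widehat{T}_k$-psh functions as the hypothesis needed there.
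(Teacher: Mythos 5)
Your setup and the first half of your plan are sound: the verification of (i)--(iii), the pushforward observation that $\widehat T_k$-psh functions descend to $\widetilde T_k$-psh functions on $X$, and the use of Lemma~\ref{modeltypeSkoda} together with the uniform $\kappa_{\theta_k}(\widetilde T_k)\le A_1$ from Lemma~\ref{le-step1xapxikidigitich} to get uniform $L^{mq}(\omega_X^n)$-bounds for the pushforwards and hence the $L^m$-bound against $\widehat T_k^n$, are all in the spirit of what the paper does.

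The gap is in the transfer from $\widehat T_k$-psh to $\widehat T_{k,\rho}$-psh functions. Your decomposition $(\pi_k)_*\psi = \Psi - (\pi_k)_* u_{\rho_j}$ requires a bound on $\|u_{\rho_j,k}\|_{L^\infty}$ (or on $(\pi_k)_*u_{\rho_j}$ in $L^{mq}$) that is \emph{uniform in $k$}, and your proposal to extract this from Lemma~\ref{lemma upper bound u rho} applied on $X_k$ does not deliver that. The constant in Lemma~\ref{lemma upper bound u rho} involves $\int_{X_k}(-\psi)^m V_{\omega_{X_k}}^{-1}\omega_{X_k}^n$ for $\psi$ in $\PSH(X_k,\omega_k+\omega_{X_k})$; this quantity depends on the geometry of $(X_k,\omega_{X_k})$ and there is no reason for it to be bounded independently of $k$, no matter how carefully you tune $\omega_{X_k}=\pi_k^*\omega_X+\epsilon_k\eta_k$ (which is forced to be singular with $k$ because $\pi_k^*\omega_X$ degenerates on the exceptional locus). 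The proof of Lemma~\ref{lemma upper bound u rho} gives the same bound for all $\rho\in[0,1]$ at once, including $\rho=1$ where the $\omega_{X_k}^n$-contamination is at full strength, so it cannot capture the crucial fact that the $k$-dependent error comes with a factor $c_\rho$ that vanishes as $\rho\to 0^+$.

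The paper avoids the decomposition entirely. Since the pushforward $\widetilde\psi_\rho=(\pi_k)_*\widehat\psi_\rho$ is directly $\widetilde T_{k,\rho}$-psh, it suffices to control $\kappa_{\theta_{k,\rho}}(\widetilde T_{k,\rho})$, and for this the paper invokes Theorem~\ref{theorem Linfty for MA with pres sing}, whose conclusion is an \emph{explicit} function of the capacity-domination constant and the $L^1$-constant for the measure. Decomposing $V_{\widetilde T_{k,\rho}}^{-1}(\widetilde T_{k,\rho})^n$ as a uniformly $L^p$-bounded piece (coming from $\mu_k$) plus $c_\rho$ times a $k$-dependent piece (coming from $(\pi_k)_*\omega_{X_k}^n$), and using Proposition~\ref{prop model potential through blow up} to transfer capacity bounds between $X$ and $X_k$, one obtains $\kappa_{\theta_{k,\rho}}(\widetilde T_{k,\rho})\le M + c_\rho M_k$ with $M$ independent of $k,\rho,T$ and $M_k$ independent of $\rho$; then Lemma~\ref{lemma iii implies 2} closes the argument with a constant $C_1 + o(1)$ as $\rho\to 0^+$, which is exactly the form required by condition (iv) in the definition of $\mathcal{V}_{\text{semi}}$. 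Your route would need a comparable explicit estimate on $\|u_{\rho,k}\|_{L^\infty}$ of the form $M + c_\rho M_k$, which neither Lemma~\ref{lemma upper bound u rho} nor your sketch supplies.
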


\proof
Let $\omega_{X_k}$ be a K\"ahler form on $X_k$. As in Section~\ref{section sobolev semi-positive}, for $\rho \in (0,1]$, we define $\widehat{T}_{k,\rho}:= \omega_k + \rho \omega_{X_k} + dd^c u_{\rho,k}$ where $u_{\rho,k}$ is the unique solution of the equation
\[(\widehat{T}_{k,\rho})^n = (\omega_k + \rho \omega_{X_k} + dd^c u_{\rho,k})^n = (\widehat{T}_k)^n + c_\rho \omega_{X_k}^n \text{ with } \sup_{X_k} u_{\rho,k} = 0,\]
where $c_\rho$ is a normalized constant so that the masses on both sides are equal. Observe that $c_\rho \to 0 $ as $\rho \rightarrow 0^+$.  Put 
$$\widetilde{T}_{k,\rho}:= (\pi_k)_* \widehat{T}_{k,\rho}.$$
Observe 
\begin{align}\label{eq-classTngarhok}
\{\widetilde{T}_{k,\rho}\} =\{\widetilde{T}_k\} + \rho \{(\pi_k)_* (\omega_{X_k})\}=\{\theta_k\}+\rho \{(\pi_k)_* (\omega_{X_k})\}.
\end{align}
We see that there exist a positive constant $A_k$ (depending on $\omega_{X_k}$) and a smooth form $\theta_{k,\rho} \in \{\widetilde{T}_{k,\rho}\}$ such that 
\begin{equation}\label{eq bound cohom of theta k rho}\theta_{k,\rho} \leq (2A'+ \rho A_k) \omega_X.\end{equation}

Now we want to control $\kappa_{\theta_{k,\rho}}$ of $\widetilde{T}_{k,\rho}$. Recall that $\widetilde{T}_{k,\rho}$ satisfies
\[V_{\widetilde{T}_{k,\rho}}^{-1}(\widetilde{T}_{k,\rho})^n = V_{\widetilde{T}_{k,\rho}}^{-1}(\pi_k)_* \Big((\widehat{T}_k)^n + c_\rho \omega_{X_k}^n\Big) = V_{\widetilde{T}_{k,\rho}}^{-1} V_{T_k}\mu_k + c_\rho V_{\widetilde{T}_{k,\rho}}^{-1}(\pi_k)_* (\omega_{X_k}^n).\]
The idea is to use Theorem~\ref{theorem Linfty for MA with pres sing} to bound $\kappa_{\theta_{k,\rho}}(\widetilde{T}_{k,\rho})$. By Proposition~\ref{prop model potential through blow up}, we see that, for a model $\theta_{k,\rho}$-potential $\phi$,
\[\capa_\phi(E) = \capa_{\phi \circ \pi_k}(\pi_k^{-1}(E))\]
for every Borel subset $E$ of $X$. Write $\widetilde{T}_{k,\rho} = \theta_{k,\rho} + dd^c \widetilde{\varphi}_{k,\rho}$ with $\sup_X \widetilde{\varphi}_{k,\rho} = 0$ and put $\phi = P_{\theta_{k,\rho}}[\widetilde{\varphi}_{k,\rho}]$. By \cite[Proposition 3.10]{Lu-Darvas-DiNezza-logconcave} (note that $\phi \circ \pi_k$ is a model potential by Proposition~\ref{prop model potential through blow up}), for every Borel subset $E$ of $X$, we can bound
\[\int_E (\pi_k) _* (\omega_{X_k}^n) = \int_{\pi_k^{-1}(E)}\omega_{X_k}^n \leq F_k [\capa_{\phi}(E)]^2,\]
where $F_k$ is a constant which does not depend on $\rho$. By Lemma \ref{lemma iii implies 1} and Proposition \ref{pro-Lmcondition}, we can bound
\[\mu_k \leq F [\capa_\phi(E)]^2\]
for some constant $F= F(\omega_X,n,p,A,B)$. Thus, we get
\[V_{\widetilde{T}_{k,\rho}}^{-1} \int_E (\widetilde{T}_{k,\rho})^n \leq (F+ c_\rho F_k) [\capa_\phi(E)]^2\]
for every Borel subset $E$ of $X$.

Let $\psi$ be a $\theta_{k,\rho}$-psh function with $\sup_X \psi = 0$. By \eqref{eq bound cohom of theta k rho}, $\psi$ is a $(2A'+\rho A_k)\omega_X$-psh function. Therefore,
\[\int_X (-\psi) V_{T_k} \mu_k \leq G + \rho G_k\]
where $G = G(\omega_X,n,p,A,B)$ and $G_k$ are constants independent of $\rho$. Moreover, since $\psi$ is a $(2A'+A_k)\omega_X$-psh function, we obtain
\[\int_X (-\psi) (\pi_k)_* (\omega_{X_k}^n) = \int_{X_k} (-\psi \circ \pi_k) \omega_{X_k}^n \leq H_k\]
for some constant $H_k$ independent of  $\rho$. 
Thus, by Theorem~\ref{theorem Linfty for MA with pres sing}, we get the crucial estimate
\begin{equation}\label{bound kappa of theta}
\kappa_{\theta_{k,\rho}}(\widetilde{T}_{k,\rho}) \leq M + c_\rho M_k,
\end{equation}
for some positive constants $M$ depending only on $\omega_X,n,p,A,B$ and  $M_k$ independent of  $\rho$.  Let $\widehat{\psi}_\rho$ be the $\widehat{T}_{k,\rho}$-psh function such that $\sup_{X_k} \psi_{\rho} = 0 $.
Put $\widetilde{\psi}_\rho = (\pi_k)_* \widehat{\psi}_\rho$. Thus $\widetilde{\psi}_\rho$ is a $\widetilde{T}_{k,\rho}$-psh function with $\sup_X \widetilde{\psi}_\rho = 0$.
By (\ref{bound kappa of theta}) and Lemma \ref{lemma iii implies 2}, we see that there exists a constant $C_1>0$ independent of $k,\rho,T$ such that 
$$\sup_{\widehat{\psi}_\rho}\frac{1}{V_{\widehat{T}_{k,\rho}}} \int_{X_k} (-\widehat{\psi}_\rho)^m (\widehat{T}_{k,\rho})^n =\sup_{\widetilde{\psi}_\rho}\frac{1}{V_{\widetilde{T}_{k,\rho}}} \int_{X} (-\widetilde{\psi}_\rho)^m (\widetilde{T}_{k,\rho})^n \le C_1  + o(1) \text{ as } \rho \rightarrow 0^+.$$
 Hence, $\widehat{T}_k \in \mathcal{V}_{\text{semi}}(X_k,n,m,C)$ for some constant $C>0$ independent of $k,T$.   
\endproof

\begin{proof}[End of the proof of Theorem \ref{theorem Sobolev for big case}]
Since $r < \frac{mn}{mn-m + n}$, by Theorem~\ref{theorem uniform sobolev for semi-positive class positive closed current}  and Lemma \ref{le-step2-giaikidichankapp}, we get the Sobolev inequalities for $\widehat{T}_k$ (thus for $\widetilde{T}_k$ because $\pi_k$ is biholomorphic outside some proper analytic subset of $X_k$).

By Lemma \ref{le-step1xapxikidigitich} and  Theorem~\ref{theorem convergence in capacity}, we get $\widetilde{T}_k^l \to T^l$ weakly as $k\to \infty$ for every $1\le l \le n$. This combined with approximation arguments as in the proof of Theorem \ref{theorem uniform sobolev for semi-positive class positive closed current}  gives the desired Sobolev inequalities for $T$.
\end{proof}

We now prove Theorem~\ref{theorem diameter and non collap for big class}.

\begin{proof}[Proof of Theorem~\ref{theorem diameter and non collap for big class}] Since the non-collapsing of volume implies the diameter bound, we only need to prove this part.
Let $T \in \mathcal{W}_{\text{big}}(X,\omega_X,n,p,A,B)$. 
Let $q \in (1, \frac{n}{n-1})$.  
Let $r \in (0,1]$ be a constant. Let $\rho: \R \to \R_{\ge 0}$ be a smooth function such that $\rho= 1$ on $[-1,1]$, $\rho=0$ outside $[-2,2]$ and $0 \le \rho \le 1$. 
Fix $x \in U_T$. We put $u:= d_T(x,\cdot)$ and  $u_r:= \rho(d_T(x,\cdot)/r)$. Observe that  $0 \le u_r \le 1$ and 
\begin{align}\label{ine-dur}
d u_r \wedge \dc u_r \le D r^{-2} du \wedge \dc u \le D r^{-2} T
\end{align}
 for some constant $D\ge 1$ depending only on $\rho$. Let $s \ge 2$ be a constant. Using (\ref{ine-dur}) and Theorem \ref{theorem Sobolev for big case}, we see that 
\begin{align}\label{ine-sobolev-def0hai3noncollap}
\bigg(\frac{1}{V_{T}}\int_X u_r^{sq} T^n\bigg)^{\frac{1}{q}} \le C s^{2} r^{-2}  \frac{1}{V_{T}} \int_X u_r^{s-2} T^n
\end{align} 
for $C = C(\omega_X,n,p,A,B,q)$. Here, we use  $0 \le u_r \le 1$. Choosing $s=3$ in \eqref{ine-sobolev-def0hai3noncollap} gives
\begin{align*}  
\bigg(\frac{1}{V_{T}}\int_X u_r^{3q} T^n\bigg)^{\frac{1}{q}} \le C r^{-2} \frac{1}{V_{T}} \int_X u_r T^n, 
 \end{align*}
By this and the definition of $u_r$, we infer that  
 $$\bigg(\frac{\vol_T(B_T(x,r/2))}{V_T}\bigg)^{1/q} \le C\frac{\vol_T(B_T(x,r))}{V_T r^{2}}$$ 
Now, arguing as in the proof of  \cite[Proposition 9.1]{Guo-Phong-Song-Sturm2}, we obtain 
$$\frac{\vol_T(B_T(x,r))}{V_T r^{\frac{2 q}{q-1}}} \ge C$$
as desired. This finishes the proof.
\end{proof}


In the last part of this section, we prove Theorems~\ref{theorem geometric for family}. 
Indeed, all of the previous estimates are not depending on the choice of $\omega_X$ except Lemma~\ref{lemma iii implies 1}. So, it suffices to give a family version of this lemma.

\begin{lemma}\label{lemma Lm bound for family}
    Let $p>1,A\geq 1,B\geq 1$ be positive constants. For every $m\geq 1$, $t\in \mathbb{D}^*$, and $T_t \in \mathcal{W}_{\text{big}}(X_t,\omega_t,n,p,A,B)$, there exists a constant $C = C(\omega_\mathcal{X},n,p,B,m)$ such that
    \[\frac{1}{V_{T_t}} \int_{X_t} (-\psi_t)^m T_t^n\leq C\]
    for every $\omega_t$-psh function $\psi_t$ with $\sup_{X_t} \psi_t= 0$.
\end{lemma}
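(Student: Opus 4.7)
The plan is to reduce the family statement to a uniform Skoda-type exponential integrability for $\omega_t$-psh functions, and then to use H\"older's inequality exactly as in Lemma~\ref{lemma iii implies 1}. Writing $q=p/(p-1)$ and using $\|f_{T_t}\|_{L^p(\omega_t^n)}\le B$, we will have
\[
\frac{1}{V_{T_t}}\int_{X_t}(-\psi_t)^m T_t^n
 = \int_{X_t}(-\psi_t)^m f_{T_t}\,\omega_t^n
 \le B\Bigl(\int_{X_t}(-\psi_t)^{mq}\,\omega_t^n\Bigr)^{1/q},
\]
so the lemma reduces to a uniform-in-$t$ bound of the form $\int_{X_t}(-\psi_t)^{mq}\omega_t^n\le C(\omega_{\mathcal{X}},n,p,B,m)$ for $\omega_t$-psh $\psi_t$ with $\sup_{X_t}\psi_t=0$.

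The crucial step I would prove is the following uniform Skoda estimate: there exist $\alpha>0$ and $C_0>0$, depending only on $\omega_{\mathcal{X}}$, such that
\[
\int_{X_t} e^{-\alpha\psi_t}\,\omega_t^n \le C_0
\]
for every $t\in\mathbb{D}^*$ and every $\omega_t$-psh function $\psi_t$ with $\sup_{X_t}\psi_t=0$. Expanding $e^{-\alpha\psi_t}$ as a power series in $-\psi_t$ then yields the desired $L^k$-bounds for all $k\ge 1$, which combined with the H\"older inequality above finishes the proof.

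To establish the uniform Skoda estimate, I would cover the compact subset $\pi^{-1}(\overline{\mathbb{D}})\subset\mathcal{X}$ by finitely many relatively compact coordinate charts $U_1,\ldots,U_N$ on which $\omega_{\mathcal{X}}=dd^c\rho_j$ for a smooth strictly plurisubharmonic local potential $\rho_j$. On $U_j\cap X_t$ the function $\tilde\psi_{j,t}:=\psi_t+\rho_j$ is plurisubharmonic in the ambient coordinates of $U_j$ restricted to the fibre. The normalisation $\sup_{X_t}\psi_t=0$, combined with the fact that $\int_{X_t}\omega_t^n$ is independent of $t$ (cf.\ \cite[Lemma 2.2]{DiNezzaGG}) and standard sub-mean-value estimates, forces $\|\psi_t\|_{L^1(U_j\cap X_t,\,\omega_t^n)}$ to be bounded uniformly in $t$. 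Applying the classical local Skoda integrability theorem to $\tilde\psi_{j,t}$ in each chart, uniformly in $t$, and absorbing the bounded factor $e^{\alpha\|\rho_j\|_{L^\infty}}$, gives the uniform global bound after summing over the finite cover.

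The main obstacle is ensuring the uniformity of the Skoda constants as $t$ varies, in particular as $t\to 0$ where $X_t$ may degenerate. This is handled by the compactness of $\pi^{-1}(\overline{\mathbb{D}})$: the charts $U_j$ live on the total space and intersect each fibre in comparable coordinate polydiscs, so the Lelong numbers of $\tilde\psi_{j,t}$ are controlled by the fixed ambient form $\omega_{\mathcal{X}}$, and the local Skoda integrability constants are consequently uniform in $t$. Once this uniformity is in place, no further ingredient is needed and the argument is a verbatim fibrewise repetition of Lemma~\ref{lemma iii implies 1}.
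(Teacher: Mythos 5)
Your overall reduction is the same as the paper's: bound $\frac{1}{V_{T_t}}\int(-\psi_t)^m T_t^n$ by H\"older using $\|f_{T_t}\|_{L^p}\le B$, so that everything hinges on a uniform-in-$t$ Skoda-type estimate $\frac{1}{V_{\omega_t}}\int_{X_t}e^{-c\psi_t}\,\omega_t^n\le C$ for normalized $\omega_t$-psh functions. The paper simply invokes this as a known theorem, namely \cite[Theorem 3.4]{DiNezzaGG}, after checking the hypotheses. You instead attempt to prove the uniform Skoda estimate from scratch, and this is where there is a genuine gap.

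The gap is in the claim that ``the normalisation $\sup_{X_t}\psi_t=0$ \dots and standard sub-mean-value estimates force $\|\psi_t\|_{L^1(U_j\cap X_t,\,\omega_t^n)}$ to be bounded uniformly in $t$,'' and in the subsequent assertion that compactness of $\pi^{-1}(\overline{\mathbb D})$ makes the local Skoda constants uniform. Both assertions are exactly where the difficulty lies when $X_0$ is singular and $X_t\to X_0$. The slices $U_j\cap X_t$ are not uniformly comparable coordinate polydiscs: near a singular point of $X_0$ the fibres $X_t$ can develop thin necks or become nearly disconnected inside a fixed ambient chart, so the constants in the sub-mean-value inequality on $U_j\cap X_t$ degenerate as $t\to 0$. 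Likewise, the Hartogs-type propagation from the point on $X_t$ where the supremum of $\psi_t$ is nearly attained to the rest of $X_t$, and the local Skoda constant (which depends on the geometry of the domain as well as on a Lelong-number bound), are not automatically controlled by the fixed ambient $\omega_{\mathcal X}$. Precisely this uniformity in families over a possibly singular base fibre is the nontrivial content of \cite[Theorem 3.4]{DiNezzaGG} (under \cite[Assumption 3.2.3]{DiNezzaGG}, which holds here because the fibres over $\mathbb D^*$ are smooth). You should cite that result, as the paper does, rather than attempt to re-derive it with an argument that glosses over the degeneration at $t=0$; once the uniform exponential integrability is in hand, the rest of your H\"older step is correct and identical to the paper's Lemma \ref{lemma iii implies 1}.
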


\begin{proof}
  By \cite[Theorem 3.4]{DiNezzaGG} (we note that since the fibers $X_t$ are smooth for $t\neq 0$, our setting satisfies \cite[Assumption 3.2.3]{DiNezzaGG}), there exist constants $c = c(\omega_{\mathcal{X}})$ and $C = C(c,\omega_{\mathcal{X}})$ such that
    \[\frac{1}{V_{\omega_t}} \int_{X_t} e^{-c \psi_t} \omega_t^n \leq C\]
    for every $\omega_t$-psh function $\psi_t$ with $\sup_{X_t} \psi_t= 0$. This combined with the uniform $L^p(\omega_t^n)$-bound for the density of $V_T^{-1} T^n$ gives the desired estimate. 
\end{proof}

\begin{proof}[Proof of Theorem~\ref{theorem geometric for family}]
The proof of Theorem~\ref{theorem diameter and non collap for big class} applies here almost verbatim, the only change is that we use Lemma~\ref{lemma Lm bound for family} instead of Lemma~\ref{lemma iii implies 1}. 
\end{proof}

\section{\texorpdfstring{$L^1\log L^p$}--condition}\label{section entropy bound}
In this section, we will explain how to extend our method to the case of $L^1 \log L^p$-condition.
We first recall the following observation which was proved in \cite[Section 2.2]{Guedj-Lu-1}.
\begin{lemma}\label{lemma Lm integrable and entropy bound}\cite[Corollary 2.2]{Guedj-Lu-1} Let $(X,\omega_X)$ be a compact K\"ahler manifold. Let $p>n,A > 0$ be constants. Let $\mu = f \omega_X^n$ for $f\geq 0$ and $\int_X f |\log(f)|^p \omega_X^n \leq A$. For every $\omega_X$-psh function $\psi$ with $\sup_X \psi = 0$, we have
\[\int_X (-\psi)^p d\mu \leq C = C(\omega_X,p,A).\]
\end{lemma}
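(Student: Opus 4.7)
The plan is to decouple $f$ and $(-\psi)^p$ by a Young-type inequality and then reduce the problem to the classical Skoda uniform exponential integrability of $\omega_X$-psh functions.

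First I would establish an elementary Young-type inequality tailored to the Orlicz function $t \mapsto t|\log t|^p$: for every $\varepsilon > 0$ there is a constant $c_p > 0$ depending only on $p$ such that
$$ab \leq a|\log a|^p + c_p \varepsilon^{-p} e^{\varepsilon b^{1/p}}, \qquad a, b \geq 0.$$
This is verified by splitting according to whether $|\log a| \geq \varepsilon b^{1/p}$ or not: in the first range $ab \leq a|\log a|^p$ directly, and in the complementary range $a \leq e^{\varepsilon b^{1/p}}$, so $ab \leq b\, e^{\varepsilon b^{1/p}}$, which is absorbed by $c_p\varepsilon^{-p} e^{\varepsilon b^{1/p}}$ since $b \leq c_p \varepsilon^{-p} e^{\varepsilon b^{1/p}/2}$.

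Next, apply the inequality pointwise with $a = f$ and $b = (-\psi)^p$, and integrate against $\omega_X^n$:
$$\int_X (-\psi)^p f\, \omega_X^n \;\leq\; \int_X f|\log f|^p\, \omega_X^n \;+\; c_p \varepsilon^{-p} \int_X e^{-\varepsilon \psi}\, \omega_X^n.$$
The first term is bounded by $A$ by hypothesis. For the second term, I would invoke Skoda's uniform exponential integrability: there exist $\varepsilon_0 = \varepsilon_0(\omega_X) > 0$ and $M = M(\omega_X) > 0$ such that $\int_X e^{-\varepsilon_0 \psi}\, \omega_X^n \leq M$ for every $\omega_X$-psh function $\psi$ with $\sup_X \psi = 0$. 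Choosing $\varepsilon = \varepsilon_0$ then yields the desired bound with $C = A + c_p \varepsilon_0^{-p} M$, which depends only on $\omega_X$, $p$, $A$.

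The only nontrivial step is verifying the Young-type inequality with the correct exponent $b^{1/p}$, but this is a straightforward case split. Note that the hypothesis $p > n$ is not actually needed for this particular lemma; it enters only in downstream applications, where Skoda-type integrability of higher powers (as in Theorem~\ref{theorem - uniform estimate Guedj-Lu condition}) requires $m > n$.
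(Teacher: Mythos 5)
Your approach — a pointwise Young-type inequality decoupling $f$ from $(-\psi)^p$, followed by Skoda's uniform exponential integrability — is the standard Orlicz-duality argument and is, to my knowledge, essentially how the cited reference \cite{Guedj-Lu-1} proves this. The paper itself does not reproduce the proof (it simply cites \cite[Corollary 2.2]{Guedj-Lu-1}), but your plan is the right one and your remark that $p>n$ is not actually needed for this particular lemma is correct: Skoda integrability holds for any $\varepsilon_0(\omega_X)>0$ small, and the exponent $p$ plays no role there; the constraint $p>n$ only enters downstream when one wants $m>n$ in Theorem~\ref{theorem - uniform estimate Guedj-Lu condition}.

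That said, the Young-type inequality as you have stated it contains two small arithmetic slips. In the first range $|\log a|\ge \varepsilon b^{1/p}$, one gets $|\log a|^p\ge\varepsilon^p b$ and hence $ab\le \varepsilon^{-p}a|\log a|^p$, not $ab\le a|\log a|^p$; the factor $\varepsilon^{-p}$ cannot be dropped since eventually one fixes $\varepsilon=\varepsilon_0<1$. In the second range, the chain $ab\le b\,e^{\varepsilon b^{1/p}}\le c_p\varepsilon^{-p}e^{\varepsilon b^{1/p}/2}\cdot e^{\varepsilon b^{1/p}}$ lands you at $c_p\varepsilon^{-p}e^{3\varepsilon b^{1/p}/2}$, not $c_p\varepsilon^{-p}e^{\varepsilon b^{1/p}}$. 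Neither issue is fatal: the corrected inequality
\[
ab \;\le\; \varepsilon^{-p}\,a|\log a|^p \;+\; c_p\,\varepsilon^{-p}\,e^{3\varepsilon b^{1/p}/2},\qquad a,b\ge 0,
\]
is what the case split actually yields, and integrating with $a=f$, $b=(-\psi)^p$, then choosing $\varepsilon=\tfrac{2}{3}\varepsilon_0$ so that Skoda applies to $e^{-\varepsilon_0\psi}$, gives $C=(\tfrac{2}{3}\varepsilon_0)^{-p}A+c_p(\tfrac{2}{3}\varepsilon_0)^{-p}M$. So the structure of your argument is sound; you should just state the Young inequality with the correct constants.
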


Let $p>n,A,B$ be positive constants. Let $\mathcal{W}^*_{\text{big}}(X,\omega_X,n,p,A,B)$ be the set of closed positive $(1,1)$-currents $T$ satisfying the following conditions:
\begin{itemize}
    \item[(i)] $T$ is a smooth K\"ahler form on an open Zariski dense subset of $X$, $T$ belongs to a big cohomology class and $T$ has minimal singularities;
    \item[(ii)] We have $\{T\} \cdot \{\omega_X\}^{n-1} \leq A\omega_X$, where $\{T\}$ denotes the cohomology class of $T$;
    \item[(iii)] $V_T^{-1} T^n = f_T \omega_X^n$ for some function $f_T$ such that $\|f_T\|_{L^1 \log L^p(\omega_X^n)} \leq B$.
\end{itemize}
\begin{theorem}\label{theorem geometric estimate entropy bound}
    Let $(X,\omega_X)$ be a compact K\"ahler manifold of dimension $n \geq 2$. Let $A , B$ be positive constants and $p > n$ be a constant. Let $q\in \Big(1,\frac{pn}{pn-p+n} \Big)$. 
    Then there exist positive constants $C_1= C_1(\omega_X,n,p,A,B)$ and $C_2 = C_2(\omega_X,n,p,A,B,q)$ such that for every $T\in \mathcal{W}_{\text{big}}^*
    (X,\omega_X,n,p,A,B)$ we have
\begin{itemize}
        \item[(i)] 
        \[\Big(\frac{1}{V_T} \int_{U_T} |u - \overline{u}|^{2q} T^n \Big)^{\frac{1}{q}} \leq C_2 \Big(\frac{1}{V_T} \int_{U_T} du\wedge d^c u \wedge T ^{n-1}\Big)\]
        for every $u\in W^{1,2}_T(X)$, where $\overline{u}:= V_T^{-1} \int_{U_T} u T^n$,
        \item[(ii)] 
    \[\Big(\frac{1}{V_T} \int_{U_T} |u|^{2q} T^n \Big)^{\frac{1}{r}} \leq C_2 \Big( \frac{1}{V_T} \int_{U_T} du\wedge d^c u \wedge T^{n-1} + \frac{1}{V_T} \int_{U_T} |u|^2 T^n\Big)\]
        for every $u\in W^{1,2}_T(X)$,
        \item[(iii)] $$\diam(\widehat{X},d_T) \leq C_1 \qquad \text{and} \qquad \frac{\vol_T(B_T(x,r))}{V_T} \geq C_2r^{\frac{2q}{q-1}}$$
    for every $r\in (0,\diam(\widehat{X},d_T)]$ and $x\in \widehat{X}$. 
    \end{itemize}

\end{theorem}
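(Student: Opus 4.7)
The plan is to follow the proofs of Theorems~\ref{theorem Sobolev for big case} and~\ref{theorem diameter and non collap for big class}, substituting the $L^p$-density hypothesis by the $L^1 \log L^p$-density hypothesis through Lemma~\ref{lemma Lm integrable and entropy bound}.

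I would first establish the $L^1 \log L^p$ analogue of Lemma~\ref{lemma iii implies 1}: for every $T \in \mathcal{W}^*_{\text{big}}(X,\omega_X,n,p,A,B)$,
\[\frac{1}{V_T}\int_X (-\psi)^p T^n \le C = C(\omega_X,n,p,B),\]
for every $\omega_X$-psh function $\psi$ with $\sup_X \psi = 0$, which is exactly Lemma~\ref{lemma Lm integrable and entropy bound} applied to the probability measure $V_T^{-1}T^n = f_T \omega_X^n$. The model-type version Lemma~\ref{lemma iii implies 2} transfers verbatim, since its proof only relies on the $\omega_X$-psh bound for $\varphi = \psi + u$ whose supremum is controlled by $\kappa_\theta(S)$. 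Fixing the exponent to $m = p$ is now forced on us (we cannot let $m$ be arbitrarily large), but $p > n \ge 2$ ensures that condition~\eqref{eq cond for m} still holds, so the estimates of Sections~\ref{section uniform for Laplace} and~\ref{section uniform for Green} remain applicable.

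Next, I would run the proof of Theorem~\ref{theorem Sobolev for big case} with $m = p$. The Demailly regularization $T_k = \theta_k + dd^c \varphi_k$ is unchanged. We approximate $f_T$ by smooth positive densities $f_k$ uniformly bounded in the $L^1 \log L^p$-norm with $f_k \to f_T$ in $L^1$ (via a standard truncation-and-convolution), and solve the prescribed-singularity MA equation for $\widetilde{\varphi}_k$ with $[\widetilde{\varphi}_k] = [\varphi_k]$. The uniform bound $\kappa_{\theta_k}(\widetilde{T}_k) \le A_1$ of Lemma~\ref{le-step1xapxikidigitich} is then obtained from Theorem~\ref{theorem Linfty for MA with pres sing}, whose capacity hypothesis~(i) is supplied by Proposition~\ref{pro-Lmcondition} fed with the new $L^p$-bound on psh test functions (giving exponent $p/n > 1$) and whose hypothesis~(ii) is the $p = 1$ specialization of that same bound.

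The main obstacle is the convergence in capacity $\widetilde{\varphi}_k \to \varphi$, which Lemma~\ref{le-step1xapxikidigitich} deduces from Theorem~\ref{theorem stability for MA with prescribed singularity} under a uniform $L^p$-bound on $f_k$, not available here. The resolution is to invoke a variant of that stability theorem assuming uniform capacity domination $\mu_k(E) \le B' (\capa_\phi(E))^{1+\delta}$ with $\delta = p/n - 1 > 0$ (which we have by Proposition~\ref{pro-Lmcondition}); this hypothesis is already compatible with the quantitative stability framework of \cite{Vu_DoHS-quantitative1,Vu_DoHS-quantitative}. Once convergence in capacity is established, Theorem~\ref{theorem convergence in capacity} yields $\widetilde{T}_k^l \to T^l$ weakly for $1 \le l \le n$, and the Sobolev inequalities (i)–(ii) of the theorem pass from the semi-positive case Theorem~\ref{theorem uniform sobolev for semi-positive class positive closed current} to $T$ for any $r$ in the interval $r < \frac{mn}{mn-m+n} = \frac{pn}{pn-p+n}$. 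Finally, part (iii)—the diameter bound and volume non-collapsing—follows from (i)–(ii) by the same Moser-type iteration carried out at the end of the proof of Theorem~\ref{theorem diameter and non collap for big class}, giving the exponent $\frac{2q}{q-1}$ for $q \in \bigl(1, \frac{pn}{pn-p+n}\bigr)$.
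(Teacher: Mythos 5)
Your proposal follows exactly the paper's strategy: replace Lemma~\ref{lemma iii implies 1} by Lemma~\ref{lemma Lm integrable and entropy bound}, fix $m=p$ (hence the shrunken range of $q$), and observe that condition~\eqref{eq cond for m} holds automatically when $p > n \geq 2$. The paper's own proof is a three-sentence pointer and states that the arguments of Theorems~\ref{theorem diameter and non collap for big class}~and~\ref{theorem Sobolev for big case} ``apply almost verbatim''.

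You go one step further by flagging a genuine subtlety that the paper's proof leaves unaddressed: in Lemma~\ref{le-step1xapxikidigitich}, the convergence in capacity $\widetilde{\varphi}_k \to \varphi$ is obtained from Theorem~\ref{theorem stability for MA with prescribed singularity}, which as stated in Section~\ref{section relative pluripotential theory} requires the densities $f_k$ to be \emph{uniformly} bounded in $L^p$ for some fixed $p>1$. When $f_T$ lies only in $L^1\log L^p$ this is not available --- any smooth approximants $f_k \to f_T$ in $L^1$ will have unbounded $L^{p'}$-norms for every $p'>1$ unless $f_T$ itself is in $L^{p'}$. Your proposed fix, replacing the uniform $L^p$ hypothesis by a uniform Ko\l odziej capacity domination $\mu_k(E)\leq B'(\capa_\phi(E))^{p/n}$ supplied by Proposition~\ref{pro-Lmcondition} and the $L^1\log L^p$ moment bound, is a plausible resolution and is consistent with how these estimates are actually used inside the proofs (compare the capacity hypothesis in Theorem~\ref{theorem Linfty for MA with pres sing}). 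However, whether the stability results of \cite{Vu_DoHS-quantitative1,Vu_DoHS-quantitative} really hold under that weaker hypothesis is not established in the present paper, and you should either locate the precise statement in those references or supply a proof of the capacity-based stability variant. Apart from that one verification, the rest of your proposal --- the model-type transfer Lemma~\ref{lemma iii implies 2}, the blow-up construction and the $\mathcal{V}_{\text{semi}}$ membership of $\widehat{T}_k$, the passage through Theorem~\ref{theorem convergence in capacity}, and the Moser iteration giving part~(iii) --- matches the paper's argument.
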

\begin{proof}
    The proof of Theorem~\ref{theorem diameter and non collap for big class} and Theorem~\ref{theorem Sobolev for big case} applies here almost verbatim. We only mention different points. Firstly, we replace Lemma~\ref{lemma iii implies 1} by Lemma~\ref{lemma Lm integrable and entropy bound}. Next, we note that the condition~\eqref{eq cond for m} holds for every $m>n$ provided that $n\geq 2$. Finally, the range for $q$  is $(1,\frac{pn}{pn -p +n})$  because we can only choose $m = p$ in Lemma~\ref{lemma Lm integrable and entropy bound} instead of any $m \geq 1$ in Lemma~\ref{lemma iii implies 1}.
\end{proof}

\bibliography{biblio_family_MA,biblio_Viet_papers,bib-kahlerRicci-flow}

\begin{thebibliography}{DDNL21b}

\bibitem[BEGZ10]{BEGZ}
S\'{e}bastien Boucksom, Philippe Eyssidieux, Vincent Guedj, and Ahmed Zeriahi.
\newblock Monge-{A}mp\`ere equations in big cohomology classes.
\newblock {\em Acta Math.}, 205(2):199--262, 2010.

\bibitem[BT82]{Bedford_Taylor_82}
Eric Bedford and B.~A. Taylor.
\newblock A new capacity for plurisubharmonic functions.
\newblock {\em Acta Math.}, 149:1--40, 1982.

\bibitem[CGZ13]{Coman-Guedj-Zeriahi}
Dan Coman, Vincent Guedj, and Ahmed Zeriahi.
\newblock Extension of plurisubharmonic functions with growth control.
\newblock {\em J. Reine Angew. Math.}, 676:33--49, 2013.

\bibitem[DDNL18]{Lu-Darvas-DiNezza-mono}
Tam\'{a}s Darvas, Eleonora Di~Nezza, and Chinh~H. Lu.
\newblock Monotonicity of nonpluripolar products and complex {M}onge-{A}mp\`ere equations with prescribed singularity.
\newblock {\em Anal. PDE}, 11(8):2049--2087, 2018.

\bibitem[DDNL21a]{Lu-Darvas-DiNezza-logconcave}
Tam\'{a}s Darvas, Eleonora Di~Nezza, and Chinh~H. Lu.
\newblock {Log-concavity of volume and complex Monge-Amp\`ere equations with prescribed singularity}.
\newblock {\em {Math. Ann.}}, 379(1-2):95--132, 2021.

\bibitem[DDNL21b]{Darvas-Lu-DiNezza-singularity-metric}
Tam\'{a}s Darvas, Eleonora Di~Nezza, and Hoang-Chinh Lu.
\newblock The metric geometry of singularity types.
\newblock {\em J. Reine Angew. Math.}, 771:137--170, 2021.

\bibitem[Dem12]{Demailly_analyticmethod}
Jean-Pierre Demailly.
\newblock {\em Analytic methods in algebraic geometry}, volume~1 of {\em Surveys of Modern Mathematics}.
\newblock International Press, Somerville, MA; Higher Education Press, Beijing, 2012.

\bibitem[DGZ16]{DinewGZ-openproblems}
S{\l}awomir Dinew, Vincent Guedj, and Ahmed Zeriahi.
\newblock Open problems in pluripotential theory.
\newblock {\em Complex Var. Elliptic Equ.}, 61:902--930, 2016.

\bibitem[DK14]{Dinew-Kolodzeij-AprioriHessian}
S{\l}awomir Dinew and S{\l}awomir Ko{\l}odziej.
\newblock A priori estimates for the complex {H}essian equations.
\newblock {\em Anal. PDE}, 7:227--244, 2014.

\bibitem[DN14]{DinhVietanhMongeampere}
Tien-Cuong Dinh and Vi{\^e}t-Anh Nguy{\^e}n.
\newblock Characterization of {M}onge-{A}mp\`ere measures with {H}\"older continuous potentials.
\newblock {\em J. Funct. Anal.}, 266(1):67--84, 2014.

\bibitem[DNGG23]{DiNezzaGG}
Eleonora Di~Nezza, Vincent Guedj, and Henri Guenancia.
\newblock Families of singular {K}\"ahler-{E}instein metrics.
\newblock {\em J. Eur. Math. Soc. (JEMS)}, 25(7):2697--2762, 2023.

\bibitem[DNL15]{DiNezzaLu-capacity}
Eleonora Di~Nezza and Chinh~H. Lu.
\newblock Generalized {M}onge-{A}mp\`ere capacities.
\newblock {\em Int. Math. Res. Not. IMRN}, 2015(16):7287--7322, 2015.

\bibitem[DS06]{DS_tm}
Tien-Cuong Dinh and Nessim Sibony.
\newblock Distribution des valeurs de transformations m\'eromorphes et applications.
\newblock {\em Comment. Math. Helv.}, 81(1):221--258, 2006.

\bibitem[DS14]{Donaldson-Sun}
Simon Donaldson and Song Sun.
\newblock Gromov-{H}ausdorff limits of {K}\"{a}hler manifolds and algebraic geometry.
\newblock {\em Acta Math.}, 213(1):63--106, 2014.

\bibitem[DS17]{DonaldsonSun2}
Simon Donaldson and Song Sun.
\newblock Gromov-{H}ausdorff limits of {K}\"{a}hler manifolds and algebraic geometry, {II}.
\newblock {\em J. Differential Geom.}, 107(2):327--371, 2017.

\bibitem[DV22a]{Vu_Do-MA}
Duc~Thai Do and Duc-Viet Vu.
\newblock Complex {M}onge-{A}mp\`ere equations with solutions in finite energy classes.
\newblock {\em Math. Res. Lett.}, 29(6):1659--1683, 2022.

\bibitem[DV22b]{Vu_DoHS-quantitative}
Hoang-Son Do and Duc-Viet Vu.
\newblock Quantitative stability for the complex {M}onge-{A}mp\`ere equations.
\newblock \url{arXiv:2209.00248}, 2022.

\bibitem[DV23]{Do-Vu-log-continuity}
Hoang-Son Do and Duc-Viet Vu.
\newblock Log continuity of solutions of complex {M}onge-{A}mp\`ere equations.
\newblock \url{arxiv:2312.04128}, 2023.

\bibitem[DV24]{Vu_DoHS-quantitative1}
Hoang-Son Do and Duc-Viet Vu.
\newblock Quantitative stability for the complex {M}onge-{A}mp\`ere equations {I}.
\newblock \url{arXiv:2405.17491}, 2024.
\newblock to appear in Analysis and PDE.

\bibitem[DZ10]{Dinew_Zhang_stability}
S{\l}awomir Dinew and Zhou Zhang.
\newblock On stability and continuity of bounded solutions of degenerate complex {M}onge-{A}mp\`ere equations over compact {K}\"ahler manifolds.
\newblock {\em Adv. Math.}, 225(1), 2010.

\bibitem[EGZ09]{EGZ}
Philippe Eyssidieux, Vincent Guedj, and Ahmed Zeriahi.
\newblock Singular {K}\"{a}hler-{E}instein metrics.
\newblock {\em J. Amer. Math. Soc.}, 22(3):607--639, 2009.

\bibitem[FGS20]{Fu-Guo-Song-geometricestimates}
Xin Fu, Bin Guo, and Jian Song.
\newblock Geometric estimates for complex {M}onge-{A}mp\`ere equations.
\newblock {\em J. Reine Angew. Math.}, 765:69--99, 2020.

\bibitem[GGZ23]{GGZ-logcontiu}
Vincent Guedj, Henri Guenancia, and Ahmed Zeriahi.
\newblock Diameter of {K}\"ahler currents.
\newblock \url{arXiv:2310.20482}, 2023.

\bibitem[GP24]{Guen-Paun-Bogomolovineqfor3fold}
Henri Guenancia and Mihai Paun.
\newblock Bogomolov-{G}ieseker inequality for log terminal {K}{\"a}hler threefolds.
\newblock \url{arXiv:2405.10003}, 2024.

\bibitem[GPS21]{Guo-Phong-Sturm-green}
Bin Guo, Duong~H. Phong, and Jacob Sturm.
\newblock Green's functions and complex {M}onge-{A}mp\`ere equations.
\newblock \url{arXiv:2202.04715}, 2021.
\newblock to appear in Journal of Differential Geometry.

\bibitem[GPSS22]{Guo-Phong-Song-Sturm}
Bin Guo, Duong~H. Phong, Jian Song, and Jacob Sturm.
\newblock Diameter estimates in {K}\"ahler geometry.
\newblock \url{ https://doi.org/10.1002/cpa.22196}, 2022.
\newblock Comm. Pure Appl. Math.

\bibitem[GPSS23]{Guo-Phong-Song-Sturm2}
Bin Guo, Duong~H. Phong, Jian Song, and Jacob Sturm.
\newblock Sobolev inequalities on {K}\"ahler spaces.
\newblock \url{arXiv:2311.00221}, 2023.

\bibitem[GPSS24]{GPSS_bodieukien}
Bin Guo, Duong~H. Phong, Jian Song, and Jacob Sturm.
\newblock Diameter estimates in {K}\"ahler geometry {II}: removing the small degeneracy assumption.
\newblock {\em Math. Z.}, 308, 2024.

\bibitem[GS22]{Guo-Song-localnoncollapsing}
Bin Guo and Jian Song.
\newblock Local noncollapsing for complex {M}onge-{A}mp\`ere equations.
\newblock {\em J. Reine Angew. Math.}, 793:225--238, 2022.

\bibitem[GT24]{GuedjTo-diameter}
Vincent Guedj and Tat~Dat T\^o.
\newblock K\"ahler families of {G}reen's functions.
\newblock \url{arXiv:2405.17232}, 2024.

\bibitem[Gue13]{Guenancia-klt}
Henri Guenancia.
\newblock K\"{a}hler-{E}instein metrics with cone singularities on klt pairs.
\newblock {\em Internat. J. Math.}, 24(5):1350035, 19, 2013.

\bibitem[GZ05]{GZ}
Vincent Guedj and Ahmed Zeriahi.
\newblock Intrinsic capacities on compact {K}\"ahler manifolds.
\newblock {\em J. Geom. Anal.}, 15(4):607--639, 2005.

\bibitem[GZ17]{GZbook}
Vincent Guedj and Ahmed Zeriahi.
\newblock {\em Degenerate complex {M}onge-{A}mp\`ere equations}, volume~26 of {\em EMS Tracts in Mathematics}.
\newblock European Mathematical Society (EMS), Z\"{u}rich, 2017.

\bibitem[Hie10]{Hiep_holder}
Pham~Hoang Hiep.
\newblock H\"older continuity of solutions to the {M}onge-{A}mp\`ere equations on compact {K}\"ahler manifolds.
\newblock {\em Ann. Inst. Fourier (Grenoble)}, 60(5):1857--1869, 2010.

\bibitem[Ko{\l}98]{Kolodziej_Acta}
S{\l}awomir Ko{\l}odziej.
\newblock The complex {M}onge-{A}mp\`ere equation.
\newblock {\em Acta Math.}, 180(1):69--117, 1998.

\bibitem[Ko{\l}05]{Kolodziej05}
S{\l}awomir Ko{\l}odziej.
\newblock The complex {M}onge-{A}mp\`ere equation and pluripotential theory.
\newblock {\em Mem. Amer. Math. Soc.}, 178(840):x+64, 2005.

\bibitem[Ko{\l}08]{Kolodziej08holder}
S{\l}awomir Ko{\l}odziej.
\newblock H\"older continuity of solutions to the complex {M}onge-{A}mp\`ere equation with the right-hand side in {$L^p$}: the case of compact {K}\"ahler manifolds.
\newblock {\em Math. Ann.}, 342, 2008.

\bibitem[LG21]{Guedj-Lu-1}
Hoang-Chinh Lu and Vincent Guedj.
\newblock Quasi-plurisubharmonic envelopes 1: Uniform estimates on {K}\"ahler manifolds.
\newblock \url{arXiv:2106.04273}, 2021.

\bibitem[LS21]{Liu-Szekelyhidi2}
Gang Liu and G\'{a}bor Sz\'{e}kelyhidi.
\newblock Gromov-{H}ausdorff limits of {K}\"{a}hler manifolds with {R}icci curvature bounded below {II}.
\newblock {\em Comm. Pure Appl. Math.}, 74(5):909--931, 2021.

\bibitem[LS22]{Liu-Szekelyhidi}
Gang Liu and G\'{a}bor Sz\'{e}kelyhidi.
\newblock Gromov-{H}ausdorff limits of {K}\"{a}hler manifolds with {R}icci curvature bounded below.
\newblock {\em Geom. Funct. Anal.}, 32(2):236--279, 2022.

\bibitem[RWN14]{Ross-WittNystrom}
Julius Ross and David Witt~Nystr\"{o}m.
\newblock Analytic test configurations and geodesic rays.
\newblock {\em J. Symplectic Geom.}, 12(1):125--169, 2014.

\bibitem[RZ11]{Rong-Zhang}
Xiaochun Rong and Yuguang Zhang.
\newblock Continuity of extremal transitions and flops for {C}alabi-{Y}au manifolds.
\newblock {\em J. Differential Geom.}, 89(2):233--269, 2011.
\newblock Appendix B by Mark Gross.

\bibitem[ST12]{Song-Tian-canonicalmeasure}
Jian Song and Gang Tian.
\newblock Canonical measures and {K}\"{a}hler-{R}icci flow.
\newblock {\em J. Amer. Math. Soc.}, 25(2):303--353, 2012.

\bibitem[SY94]{Yau-Schoen-LectureDiffgeo}
Richard Schoen and Shing-Tung Yau.
\newblock {\em Lectures on Differential Geometry}.
\newblock International Press of Boston, Inc., 1994.

\bibitem[Tia19]{Tian-survey-KE}
Gang Tian.
\newblock Some progresses on {K}\"{a}hler-{R}icci flow.
\newblock {\em Boll. Unione Mat. Ital.}, 12(1-2):251--263, 2019.

\bibitem[Tos10]{Tosatti-collapsing}
Valentino Tosatti.
\newblock Adiabatic limits of {R}icci-flat {K}\"{a}hler metrics.
\newblock {\em J. Differential Geom.}, 84(2):427--453, 2010.

\bibitem[Tos18]{Tosatti-KEflow}
Valentino Tosatti.
\newblock K{AWA} lecture notes on the {K}\"{a}hler-{R}icci flow.
\newblock {\em Ann. Fac. Sci. Toulouse Math. (6)}, 27(2):285--376, 2018.

\bibitem[{Tos}20]{Tosatti-survey}
Valentino {Tosatti}.
\newblock {Collapsing Calabi-Yau manifolds}.
\newblock In {\em Differential geometry, Calabi-Yau theory, and general relativity. Lectures given at conferences celebrating the 70th birthday of Shing-Tung Yau at Harvard University, Cambridge, MA, USA, May 2019}, pages 305--337. Somerville, MA: International Press, 2020.

\bibitem[Tsu88]{Tsuji}
Hajime Tsuji.
\newblock Existence and degeneration of {K}\"{a}hler-{E}instein metrics on minimal algebraic varieties of general type.
\newblock {\em Math. Ann.}, 281(1):123--133, 1988.

\bibitem[Vu21]{Viet-generalized-nonpluri}
Duc-Viet Vu.
\newblock Relative non-pluripolar product of currents.
\newblock {\em Ann. Global Anal. Geom.}, 60:269--311, 2021.

\bibitem[Vu23]{Vu-log-diameter}
Duc-Viet Vu.
\newblock Continuity of functions in complex {S}obolev spaces.
\newblock \url{arXiv:2312.01635}, 2023.

\bibitem[Vu24]{Vu-diameter}
Duc-Viet Vu.
\newblock Uniform diameter estimates for {K}\"ahler metrics.
\newblock \url{arxiv:2405.14680}, 2024.

\bibitem[WN19]{WittNystrom-mono}
David Witt~Nystr\"{o}m.
\newblock Monotonicity of non-pluripolar {M}onge-{A}mp\`ere masses.
\newblock {\em Indiana Univ. Math. J.}, 68(2):579--591, 2019.

\bibitem[Yau78]{Yau1978}
Shing-Tung Yau.
\newblock On the {R}icci curvature of a compact {K}\"ahler manifold and the complex {M}onge-{A}mp\`ere equation. {I}.
\newblock {\em Comm. Pure Appl. Math.}, 31, 1978.

\end{thebibliography}
\bibliographystyle{alpha}

\bigskip

\noindent
\Addresses

\end{document}